\documentclass{amsart}

\usepackage{amscd,amsmath,amssymb,amsfonts,bbm, calligra, xspace}
\usepackage[T1]{fontenc}
\usepackage{lmodern}
\usepackage{mathtools}
\usepackage{enumerate, enumitem}
\usepackage{multicol}
\usepackage[all]{xy}
\usepackage{mathrsfs}
\usepackage{footmisc}
\usepackage{graphicx}
\usepackage{microtype}

\usepackage{stackrel}

\usepackage{xcolor}
\definecolor{darkblue}{rgb}{0,0,0.8}
\definecolor{darkgreen}{rgb}{0,0.4,0}
\usepackage[colorlinks=true,linkcolor=darkblue, citecolor=darkgreen,  urlcolor=darkgreen, unicode,pdfborder={0 0 0},final]{hyperref}

\newtheorem{thm}{Theorem}[section]
\newtheorem{prop}[thm]{Proposition}

\newtheorem{lem}[thm]{Lemma}
\newtheorem{cor}[thm]{Corollary}

\theoremstyle{definition}

\theoremstyle{remark}
\newtheorem{rem}[thm]{Remark}
\newtheorem{rems}[thm]{Remarks}

\numberwithin{equation}{section}
\newcommand{\BM}{\mathrm{BM}}

\newcommand{\Br}{\mathrm{Br}}
\newcommand{\Gr}{\mathrm{Gr}}

\newcommand{\Id}{\mathrm{Id}}

\newcommand{\Gys}{\mathrm{Gys}}

\newcommand{\topo}{\mathrm{top}}
\newcommand{\cl}{\mathrm{cl}}
\newcommand{\Cl}{\mathrm{Cl}}

\newcommand{\qc}{\mathrm{qc}}

\newcommand{\f}{\mathrm{f}}

\newcommand{\colim}{\mathrm{colim}}

\newcommand{\et}{\mathrm{\acute{e}t}}

\newcommand{\red}{\mathrm{red}}

\newcommand{\nr}{\mathrm{nr}}

\newcommand{\tors}{\mathrm{tors}}

\newcommand{\an}{\mathrm{an}}
\newcommand{\St}{\mathrm{St}}

\newcommand{\PGL}{\mathrm{PGL}}
\newcommand{\BSL}{\mathrm{BSL}}
\newcommand{\GL}{\mathrm{GL}}

\newcommand{\Spec}{\mathrm{Spec}}

\newcommand{\Gal}{\mathrm{Gal}}

\newcommand{\BGL}{\mathrm{BGL}}

\newcommand{\h}{\mathrm{h}}

\newcommand{\RR}{\mathrm{R}}

\newcommand{\isoto}{\myxrightarrow{\,\sim\,}}
\makeatletter
\def\myrightarrow{{\setbox\z@\hbox{$\rightarrow$}\dimen0\ht\z@\multiply\dimen0 6\divide\dimen0 10\ht\z@\dimen0\box\z@}}
\def\myrightarrowfill@{\arrowfill@\relbar\relbar\myrightarrow}
\newcommand{\myxrightarrow}[2][]{\ext@arrow 0359\myrightarrowfill@{#1}{#2}}
\makeatother

\makeatletter
\newcommand{\extp}{\@ifnextchar^\@extp{\@extp^{\,}}}
\def\@extp^#1{\mathop{\bigwedge\nolimits^{\!#1}}}
\makeatother

\def\loccit{\emph{loc}.\kern3pt \emph{cit}.{}\ }
\def\eg{e.g.\kern.3em}

\def\resp {\text{resp.}\kern.3em}

\newcommand{\vsim}{\rotatebox[origin=c]{90}{$\sim$}}

\def\A{\mathbb A}

\def\Z{\mathbb Z}
\def\C{\mathbb C}

\def\K{\mathbb K}
\def\LL{\mathbb L}
\def\M{\mathbb M}
\def\Q{\mathbb Q}
\def\P{\mathbb P}
\def\R{\mathbb R}

\def\bS{\mathbb S}

\def\cA{\mathcal{A}}
\def\cN{\mathcal{N}}
\def\cM{\mathcal{M}}
\def\cL{\mathcal{L}}
\def\cO{\mathcal{O}}
\def\cE{\mathcal{E}}
\def\cF{\mathcal{F}}
\def\cG{\mathcal{G}}
\def\cH{\mathcal{H}}

\def\cM{\mathcal{M}}

\def\cI{\mathcal{I}}
\def\ci{\mathcal{C}^{\infty}}

\def\kp{\mathfrak{p}}

\def\tf{\tilde{f}}

\def\talpha{\tilde{\alpha}}
\def\tbeta{\tilde{\beta}}
\def\tgamma{\tilde{\gamma}}
\def\tdelta{\tilde{\delta}}
\def\tvarepsilon{\tilde{\varepsilon}}

\def\wS{\widetilde{S}}

\def\wH{\widetilde{H}}
\def\wK{\widetilde{K}}

\def\wOmega{\widetilde{\Omega}}

\def\whV{\widehat{V}}

\begin{document}

\title[]{\'Etale cohomology of Stein algebras}

\author{Olivier Benoist}
\address{D\'epartement de math\'ematiques et applications, \'Ecole normale sup\'erieure, CNRS,
45 rue d'Ulm, 75230 Paris Cedex 05, France}
\email{olivier.benoist@ens.fr}

\renewcommand{\abstractname}{Abstract}

\begin{abstract}
We prove that the singular cohomology with finite coefficients of a finite-dimensional Stein space~$S$ is isomorphic to the \'etale cohomology of the Stein algebra~$\cO(S)$. We deduce that any class in $H^k(S,\Z)$ comes from an algebraic variety by pullback by a holomorphic map (if $k\geq 1$), and vanishes on the complement of a nowhere dense closed analytic subset~of~$S$ (if $k\geq 2$).
\end{abstract}

\maketitle

\section{Introduction}

\subsection{A comparison theorem in Stein geometry}
\label{parcompaintro}

Let $V$ be a complex algebraic variety and let $V^{\an}=V(\C)$ be its analytification. Artin's comparison theorem (see \cite[XVI, Th\'eor\`eme 4.1]{SGA43}) states that the natural morphism
\begin{equation}
\label{compaArtin}
H^k_{\et}(V,\LL)\to H^k(V^{\an},\LL^{\an})
\end{equation}
from the \'etale cohomology of $V$ to the singular cohomology of $V^{\an}$ is an isomorphism for all $k\geq 0$ and all constructible \'etale sheaves $\LL$ on $V$ (\eg when $\LL=\Z/m$ for some~$m\geq 1$). By providing a purely algebraic description of singular cohomology groups (with finite coefficients) of complex algebraic varieties, Artin's theorem offers insights into both their algebraic and topological properties.

The main goal of this article is to prove an analogue of Artin's theorem in (complex-analytic) Stein geometry. Recall that a complex space $S$ is \textit{Stein} if the restriction map $\cO(S)\to\cO(T)$ is surjective for all discrete subsets $T$ of $S$. Examples include the affine spaces $\C^n$ for $n\geq 0$, as well as all their closed complex subspaces. 

\begin{thm}[Theorem \ref{thproperpf}]
\label{thmain}
Let $S$ be a finite-dimensional Stein space. Let $\LL$ be a constructible \'etale sheaf on a proper $\cO(S)$\nobreakdash-scheme of finite presentation $X$. Then
\begin{equation}
\label{compaintro}
H^k_{\et}(X,\LL)\to H^k(X^{\an},\LL^{\an})
\end{equation}
is an isomorphism for all $k\geq 0$.
\end{thm}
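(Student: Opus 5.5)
Here is the plan. First prove the case $X=\Spec(\cO(S))$ and $\LL=\Z/m$, i.e.\ that $H^k_{\et}(\cO(S),\Z/m)\to H^k(S,\Z/m)$ is an isomorphism. Then extend to proper $X$ and constructible $\LL$ using proper base change on both sides.

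\emph{Reduction to the affine case $X=\Spec(\cO(S))$.} Let $f\colon X\to\Spec(\cO(S))$ be proper, and write $A=\cO(S)$. The analytification $X^{\an}$ is a complex space with a proper map $f^{\an}\colon X^{\an}\to S$. Both sides of \eqref{compaintro} come with Leray spectral sequences, for $Rf_*$ and $Rf^{\an}_*$, and the comparison map is compatible with them. By proper base change, the stalk of $R^qf_*\LL$ at a geometric point is the cohomology of the geometric fiber; by topological proper base change, the stalk of $R^qf^{\an}_*\LL^{\an}$ at $s\in S$ is $H^q(X^{\an}_s,\LL^{\an})$. At closed points $s\in S$ the fibers are proper varieties over $\C$, so Artin's theorem \eqref{compaArtin} identifies the two stalks. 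The sheaves $R^qf_*\LL$ are constructible (finite presentation plus noetherian approximation). Hence we reduce to proving that $H^k_{\et}(\Spec A,\cF)\to H^k(S,\cF^{\an})$ is an isomorphism for every constructible $\cF$ on $\Spec A$, together with the fact that $(R^qf_*\LL)^{\an}\cong R^qf^{\an}_*\LL^{\an}$.

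That last identification is delicate, because $\Spec A$ has many non-closed points not corresponding to points of $S$. I would handle it by checking that the analytification of a constructible sheaf is determined by its restriction to the points coming from $S$. By dévissage (stratify, then use $j_!$ and $i_*$ for finitely presented closed subschemes, which correspond to closed analytic subsets of $S$, again Stein), it suffices to treat sheaves of the form $\pi_*\Z/m$ for finite étale $\pi$ over a finitely presented open or closed piece.

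\emph{The base case.} For $\cF=\Z/m$ on $\Spec A$ and on $\Spec$ of finite étale $A$-algebras, which are again Stein algebras of finite covering spaces, I would argue by degree. In degree $0$, idempotents of $A$ correspond to clopen subsets of $S$. In degree $1$, one needs $\Z/m$-torsors: finite étale $A$-algebras correspond to finite topological covers of $S$, by the Grauert–Remmert theory of finite maps together with the fact that a finite cover of a Stein space is Stein. In degree $2$, use the Kummer sequence, $\mathrm{Pic}(A)=H^1(S,\cO^*)=H^2(S,\Z)$ via the exponential sequence and Cartan's Theorem B, plus a comparison of Brauer groups. For higher degrees I would use induction on $\dim S$ with an Artin-style fibration argument: write $S$ locally as built from lower-dimensional pieces, and show that étale cohomology classes are killed on a cover of the form $\Spec A[1/g]$ or finite étale covers. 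This mirrors the fact that $S$ has the homotopy type of a CW complex of dimension $\le\dim S$, and parallels the vanishing $\mathrm{cd}(A)\le \dim S$.

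\textbf{Main obstacle.} The main obstacle is the base case in all degrees. $A$ is non-noetherian and of huge Krull dimension, so standard tools such as Gabber's affine Lefschetz theorem or cohomological dimension bounds do not apply directly. I would first try to write $A$ as a filtered colimit of finitely generated $\C$-subalgebras $A_i$, and compare $\varinjlim H^k_{\et}(A_i)$ with $H^k(S)$ via Artin's theorem on each $\Spec A_i$. The key input would then be an approximation statement: every class on $S$ is pulled back from some algebraic $\Spec A_i$ along the holomorphic map $S\to\Spec(A_i)^{\an}$, and every class dying on $S$ already dies on some $A_j$. Surjectivity should come from Oka–Grauert-type approximation: embed $S$ properly into $\C^N$ and realize classifying maps to algebraic models. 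Injectivity is where I expect the real difficulty.
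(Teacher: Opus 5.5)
Your reduction from proper $X$ to $X=\Spec(\cO(S))$ is the paper's: Leray spectral sequences for $f$ and $f^{\an}$, constructibility of $\RR^qf_*\LL$, and the isomorphism $(\RR^qf_*\LL)^{\an}\isoto\RR^q(f^{\an})_*\LL^{\an}$. That last isomorphism is not delicate in the way you fear. It identifies two sheaves on $S$, so it can be checked on stalks at points $s\in S$: proper base change computes both sides, and Artin's theorem on the fiber $X_s$ compares them. Non-closed points of $\Spec(\cO(S))$ never enter.

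For constructible coefficients, dévissage through $j_!$ on open pieces needs care, because the comparison fails for open subschemes (Remark~\ref{remscex}\,(ii)). The clean route is to resolve $\LL$ by finite sums of sheaves $\pi_*\M$, with $\pi$ finite of finite presentation and $\M$ constant. This reduces everything to $\Z/m$ on $\Spec(\cO(T))$ for finite-dimensional Stein spaces $T$ finite over $S$.

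The genuine gap is this base case in degrees $k\geq 3$. You give no argument for it, and both routes you suggest fail.
\begin{itemize}
\item An Artin-style induction that kills classes on localizations $\Spec(\cO(S)[\frac{1}{g}])$ and on finite \'etale covers is local on $\Spec(\cO(S))$. If it worked, it would also prove the comparison for Zariski-open subschemes, where it is false already for $k=0$.
\item Finite \'etale covers cannot kill anything on a connected, simply connected $S$.
\item The bound $\mathrm{cd}\leq\dim S$ is a consequence of the theorem, not an available input.
\item Writing $\cO(S)$ as a colimit of finitely generated subalgebras and applying Artin's theorem only restates the claim as $\colim_{f:S\to V^{\an}}H^k(V^{\an},\Z/m)\isoto H^k(S,\Z/m)$. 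The paper \emph{deduces} that statement from the comparison theorem (Theorem~\ref{thmcolim}).
\item An Oka-theoretic proof of surjectivity along these lines would need Oka (indeed algebraic) models of skeleta of $K(\Z/m,k)$, which are not known. You also have no handle on injectivity.
\end{itemize}

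What is missing is a global mechanism: cohomological descent along \emph{finite} hypercoverings, together with killing classes on finite covers.
\begin{itemize}
\item By Proposition~\ref{propadjeq}, finite finitely presented $\cO(S)$-schemes correspond to globally finitely presented finite holomorphic maps to $S$. So hypercoverings in $\Spec(\cO(S))_{\f}$ and in $S_{\f}$ match.
\item Take colimits of the \v{C}ech-to-derived spectral sequences for the h and qc topologies; these compute \'etale and singular cohomology.
\item Their rows $q=0$ agree, because clopen subsets correspond.
\item Their rows $q>0$ vanish once every positive-degree class can be killed on a covering in the finite site. On the \'etale side this is Bhatt's theorem.
\item On the analytic side it is Theorem~\ref{thkill}: every $\alpha\in H^k(S,\Z/m)$ with $k\geq1$ dies on a finite \emph{flat}, in general ramified, cover of positive degree. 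Flatness is what guarantees algebraic surjectivity, so that the cover is a covering in $S_{\f}$.
\end{itemize}
Theorem~\ref{thkill} is the heart of the paper. Its proof uses:
\begin{itemize}
\item the Oka manifolds $\Omega_{k,m}=\C^{l+2}\setminus K_{k,m}$, which have the homotopy type of Moore spaces (via Kusakabe's theorem for the polynomially convex $K_{k,m}$);
\item the cover $z_{l+2}\mapsto z_{l+2}^m$, which kills $\alpha_{k,m}$ for $k\geq5$;
\item downward induction on $k$ to kill the finite obstruction groups;
\item separate arguments for $k\leq4$.
\end{itemize}
Nothing in your proposal replaces this step.
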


In Theorem \ref{thmain}, the complex space $X^{\an}$ is the analytification of $X$ in the sense of Bingener~\cite{Bingener}, which is naturally in bijection with $X(\C)$ where $X$ is viewed as a $\C$-scheme (see \S\ref{paranal}), and the comparison morphism \eqref{compaintro} is defined in \eqref{defcompa}. 

The hypotheses that $S$ be finite-dimensional, that $X$ be proper, that $X$ be of finite presentation (rather than merely of finite type), and that~$\LL$ be constructible are all essential for the validity of Theorem \ref{thmain} (see Remarks \ref{remscex}).

A d\'evissage based on the relative comparison theorem \cite[Theorem~4.9]{Stein} reduces Theorem \ref{thmain} to the following key particular case, which computes the \'etale cohomology of a Stein algebra (a ring of holomorphic functions on a Stein space).

\begin{thm}[Theorem \ref{thcompaS}]
\label{thmainS}
Let $S$ be a finite-dimensional Stein space. Then
$$H^k_{\et}(\Spec(\cO(S)),\Z/m)\to H^k(S,\Z/m)$$
is an isomorphism for all $k\geq 0$ and $m\geq 1$.
\end{thm}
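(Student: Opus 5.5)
Set $A=\cO(S)$ and $n=\dim S$. I would argue by induction on $n$. For the induction I would use the relative comparison theorem \cite[Theorem~4.9]{Stein} together with a Lefschetz-type fibration argument in the spirit of Artin's original proof of \eqref{compaArtin}.

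The first step is to identify the points of $\Spec(A)$. Closed points include the maximal ideals $\mathfrak{m}_s$ for $s\in S$, but there are also many "free" maximal ideals, which do not come from points of $S$. The comparison map comes from the morphism of sites $S\to\Spec(A)_{\et}$. Following Artin, I would show that $H^k_{\et}(\Spec(A),\Z/m)\to H^k(S,\Z/m)$ is an isomorphism by comparing the Leray spectral sequences along a suitable factorization.

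Concretely, I would prove the statement by ascending induction on $k$, proving surjectivity and injectivity separately.
\begin{enumerate}[label=(\roman*)]
\item For $k=0,1$ I would argue directly. $H^0$ counts connected components, and connected components of $S$ correspond to idempotents of $A$. $H^1(-,\Z/m)$ classifies $\Z/m$-torsors. A finite étale $A$-algebra analytifies to a finite topological covering of $S$. Conversely, every finite covering of a Stein space is Stein, and its ring of functions is finite étale over $A$. This step uses Cartan's Theorem~B and the finite-dimensionality of $S$.
\item For $k\geq 2$ I would use the effaceability method. To prove injectivity, it suffices to show that every class $\alpha\in H^k_{\et}(\Spec(A),\Z/m)$ dies on some étale cover of $\Spec(A)$ whose analytification is a Stein space of the same dimension. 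Then a Čech/hypercover spectral sequence comparison, combined with the induction hypothesis on $k$, finishes the argument. To prove surjectivity, I would show the analytic counterpart: a class in $H^k(S,\Z/m)$ with $k\geq1$ is killed on a finite étale-type cover. Topologically, $S$ has the homotopy type of a CW complex of dimension $\leq n$ (Hamm), so $H^k(S,\Z/m)=0$ for $k>n$.
\item To match the cohomological dimensions, I would show $\mathrm{cd}_m(\Spec A)\leq n$. I would do this by stratifying $S$ by a closed analytic subset $Z$ of smaller dimension whose complement is "locally of Artin neighbourhood type", meaning it is fibred in smooth affine curves over a lower-dimensional Stein space. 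The induction on $n$ then applies to $Z$, and the relative comparison theorem applies to the curve fibration. The latter reduces the problem to curves over Stein algebras, where the comparison is essentially the case of $\mathbb G_m$- and $\mu_m$-torsors together with the Picard and Brauer groups. Here I would use the fact that $\mathrm{Pic}$ of a Stein space is $H^2(S,\Z)$, via the exponential sequence.
\end{enumerate}

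I expect the main obstacle to be constructing these Artin neighbourhoods and excision inside $\Spec(A)$. The scheme $\Spec(A)$ is not noetherian and has free maximal ideals, so étale cohomology of $\Spec A$ "sees" points at infinity. One must show these points contribute nothing. My first attempt would be to write $A$ as a filtered colimit of finitely generated $\C$-algebras $A_i$, with compatible holomorphic maps $S\to\Spec(A_i)^{\an}$, and use continuity of étale cohomology:
\[
H^k_{\et}(\Spec A,\Z/m)=\colim_i H^k_{\et}(\Spec A_i,\Z/m).
\]
Combining this with Artin's theorem \eqref{compaArtin} for each $A_i$ reduces the statement to a purely analytic claim. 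The claim is that every class on $S$ is pulled back from some algebraic $\Spec(A_i)^{\an}$, and every class dying on $S$ already dies on some later $\Spec(A_j)^{\an}$. I would prove both halves with Oka–Grauert-type approximation and embedding results for Stein spaces, for instance by embedding $S$ properly into $\C^N$ and approximating a neighbourhood retraction. This is where finite-dimensionality is essential.
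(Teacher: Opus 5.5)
There is a genuine gap: none of your three routes contains a mechanism that actually proves the theorem.

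\textbf{Why the Artin-style route fails.} The d\'evissage in (ii)--(iii) uses \'etale \v{C}ech covers with induction on $k$, Artin neighbourhoods, the stratification $S\supset Z$, and fibrations in affine curves. All of this requires the comparison isomorphism on \'etale pieces of $\Spec(\cO(S))$, in particular Zariski-open ones, and along non-proper morphisms. There the comparison is false, already for $k=0$ (Remark \ref{remscex}\,(ii)). For instance, with $S=\bigsqcup_{n\geq 1}S_n$ as in Remark \ref{remnonsurj}, the open subscheme $U=D(\varepsilon)$ is nonempty while $U^{\an}=\varnothing$. An argument of this shape would apply verbatim to Zariski-open subschemes, which is exactly why the paper abandons Artin's strategy (see \S\ref{parprevious}).

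Likewise, $\mathrm{cd}_m(\Spec(\cO(S)))\leq n$ is a consequence of the theorem, not an available input; already $\cO(\C)$ has infinite Krull dimension.

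\textbf{Why the continuity route is circular.} Your last paragraph is a correct reformulation, but by continuity of \'etale cohomology and Artin's theorem, the claim that classes on $S$ come from, and die on, some $\Spec(A_i)^{\an}$ is \emph{equivalent} to the theorem. It is Theorem \ref{thmcolim}, which the paper deduces from it. Embedding $S$ in $\C^N$ or using a neighbourhood retraction gives no algebraic target carrying the cohomology of $S$. An Oka-theoretic proof of surjectivity would need algebraic varieties with Oka analytifications approximating $K(\Z/m,k)$, which nobody knows how to build. Injectivity has no such handle at all.

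\textbf{What is actually needed.} The proof works by cohomological descent along \emph{finite} hypercoverings.
\begin{enumerate}
\item Finite $\cO(S)$-schemes of finite presentation correspond to globally finitely presented finite spaces over $S$ (Proposition \ref{propadjeq}), and $H^0$ agrees on them.
\item Hence, in the colimit over finite hypercoverings (h topology on one side, qc topology on the other), the $q=0$ rows of the \v{C}ech-to-derived spectral sequences coincide.
\item It then suffices that all rows $q>0$ vanish, i.e.\ that positive-degree classes die on finite covers on both sides.
\end{enumerate}
Algebraically this is Bhatt's theorem \cite{Bhatt}. Analytically it is Theorem \ref{thkill}: every $\alpha\in H^k(S,\Z/m)$ with $k\geq 1$ dies on a finite \emph{flat} cover of positive degree. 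Flatness is what makes the cover algebraically surjective (cf.\ Remark \ref{remnonsurj}).

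Your ``finite \'etale-type cover'' cannot do this. The affine quadric $\{x^2+y^2+z^2=1\}\subset\C^3$ is a simply connected Stein manifold retracting onto $\bS^2$. A generator of $H^2(\,\cdot\,,\Z/m)\simeq\Z/m$ therefore survives on every finite \'etale cover, so ramified covers are unavoidable.

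Constructing these covers is the heart of the paper. It uses Oka manifolds $\Omega_{k,m}=\C^{l+2}\setminus K_{k,m}$ with the homotopy type of Moore spaces, whose tautological class dies (or becomes integral) on the cyclic cover $z_{l+2}\mapsto z_{l+2}^m$. Obstruction classes are killed by downward induction on $k$. Degrees $2$, $3$, $4$ need separate arguments (Brauer groups and Severi--Brauer spaces, $\C^2\setminus\{0\}$, second Chern classes). None of this appears in your proposal.
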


We highlight the no less difficult particular case of Theorem \ref{thmainS} where $S=\C^n$.

\begin{cor}
One has $H^k_{\et}(\Spec(\cO(\C^n)),\Z/m)=0$ for all $k,m\geq 1$ and~${n\geq 0}$.
\end{cor}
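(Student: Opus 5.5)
This corollary is the special case $S=\C^n$ of Theorem~\ref{thmainS}, so the plan is to check that $\C^n$ meets its hypotheses and then read off the answer from the topology of $\C^n$.

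First, $\C^n$ is a Stein space. This is one of the basic examples recalled in \S\ref{parcompaintro}: any discrete subset $T\subset\C^n$ is closed, and the restriction map $\cO(\C^n)\to\cO(T)$ is surjective by the Weierstrass interpolation theorem, or by Cartan's Theorem~B. It is also finite-dimensional, of dimension $n$. Theorem~\ref{thmainS} therefore gives, for all $k\geq 0$ and $m\geq 1$, an isomorphism
$$H^k_{\et}(\Spec(\cO(\C^n)),\Z/m)\isoto H^k(\C^n,\Z/m).$$

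Second, $\C^n$ is contractible. By homotopy invariance of singular cohomology, $H^k(\C^n,\Z/m)=H^k(\mathrm{pt},\Z/m)$, and this vanishes for all $k\geq 1$. The case $n=0$ fits the same pattern: there $\cO(\C^0)=\C$, and $\Spec(\C)$ has trivial \'etale cohomology in positive degrees because $\C$ is algebraically closed.

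Combining the two steps gives $H^k_{\et}(\Spec(\cO(\C^n)),\Z/m)=0$ for all $k,m\geq 1$. All the genuine difficulty lies in Theorem~\ref{thmainS} itself, which we are allowed to assume here. Deducing the corollary from it involves no real obstacle: the only inputs are the Stein property of $\C^n$ and the contractibility of $\C^n$.
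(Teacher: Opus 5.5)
Your proposal is correct and follows the paper's own route. The paper presents this corollary as the special case $S=\C^n$ of Theorem~\ref{thmainS}, using only that $\C^n$ is a finite-dimensional Stein space and that its contractibility gives $H^k(\C^n,\Z/m)=0$ for $k\geq 1$.
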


Since a Stein space of dimension $n$ has the homotopy type of a CW complex of dimension $\leq n$ (see \cite[Korollar]{Hamm}), Theorem \ref{thmainS} implies the next corollary.

\begin{cor}
Let $S$ be a Stein space of dimension $n$. Then ${H^k_{\et}(\Spec(\cO(S)),\Z/m)}$ vanishes for all $k>n$ and $m\geq 1$.
\end{cor}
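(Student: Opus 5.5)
The plan is to combine Theorem \ref{thmainS} with Hamm's theorem \cite[Korollar]{Hamm}. Fix $k>n$ and $m\geq 1$.

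First, $S$ has dimension $n$, so in particular it is finite-dimensional. Theorem \ref{thmainS} therefore applies to $S$ and gives an isomorphism
$$H^k_{\et}(\Spec(\cO(S)),\Z/m)\isoto H^k(S,\Z/m).$$
It thus suffices to show that $H^k(S,\Z/m)=0$ for all $k>n$.

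Second, by \cite[Korollar]{Hamm}, the underlying topological space of a Stein space of dimension $n$ is homotopy equivalent to a CW complex $K$ of dimension $\leq n$. Here $H^k(S,\Z/m)$ denotes singular cohomology, which is a homotopy invariant, so $H^k(S,\Z/m)\cong H^k(K,\Z/m)$. The singular cohomology of a CW complex agrees with its cellular cohomology. The cellular cochain complex of $K$ vanishes in degrees $>\dim K$, hence in all degrees $k>n$. So $H^k(K,\Z/m)=0$ for $k>n$.

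One point needs care: the comparison map in Theorem \ref{thmainS} must land in the cohomology theory to which Hamm's homotopy type applies. If the paper's $H^k(S,\Z/m)$ is sheaf cohomology of the constant sheaf, we use that $S$ is a finite-dimensional complex space. Such a space is locally contractible and paracompact, so sheaf cohomology with constant coefficients agrees with singular cohomology there. With that identification the argument above goes through, and it is the only mildly nontrivial point. Combining the two steps gives $H^k_{\et}(\Spec(\cO(S)),\Z/m)=0$ for all $k>n$ and $m\geq 1$.
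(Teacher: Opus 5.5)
Your proposal is correct and follows the paper's own argument exactly: the corollary is deduced by combining Theorem~\ref{thmainS} with Hamm's result that a Stein space of dimension $n$ has the homotopy type of a CW complex of dimension $\leq n$. Your extra remark, identifying sheaf cohomology with singular cohomology on the locally contractible, paracompact underlying space, is a valid detail that the paper leaves implicit.
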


\subsection{Applications to the singular cohomology of Stein spaces}

Artin's comparison theorem \eqref{compaArtin} can be applied in two distinct ways. By computing the \'etale cohomology of complex algebraic varieties in topological terms, it can be used to give topological proofs of algebraic statements. Conversely, leveraging distinctive features of \'etale cohomology,
%Bloch-Kato, Galois actions, compatibility with affine limits of schemes
it can help study the topological properties of algebraic varieties.
Comparison theorems in Stein geometry can also be exploited in both directions. Our initial motivation to study them was to deduce, from our excellent understanding of the topology of Stein spaces, algebraic results on rings of holomorphic or meromorphic functions. Applications along these lines (of other comparison theorems discussed in \S\ref{parprevious}) appear in \cite{Stein, Steinsurface} (\eg quantitative results on the analytic version of Hilbert's 17th problem \cite[Theorem 7.5]{Stein}, or a period-index theorem in Stein geometry \cite[Theorem 8.3]{Steinsurface}). In this article, we illustrate Theorem \ref{thmain} by presenting two applications of the second kind.

First, we prove that integral cohomology classes of degree $\geq 2$ on finite-dimensional Stein spaces are supported on nowhere dense closed analytic subsets.

\begin{thm}[Theorem \ref{thintegral}]
\label{thintegralintro}
Let $S$ be a finite-dimensional Stein space. Fix a class $\alpha\in H^k(S,\Z)$ for some~$k\geq 2$. There exists a nowhere dense closed analytic subset~$Z\subset S$ such that $\alpha|_{S\setminus Z}=0$ in~$H^k(S\setminus Z,\Z)$. 
\end{thm}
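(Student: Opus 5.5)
We plan to deduce Theorem~\ref{thintegralintro} from Theorem~\ref{thmainS} together with the effacement property of étale cohomology of fields, passing from finite to integral coefficients by a Bockstein argument.

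\emph{Reduction to finite coefficients.} Since $S$ has the homotopy type of a finite-dimensional CW complex (\cite[Korollar]{Hamm}), the torsion of $H^k(S,\Z)$ is harmless, but $\alpha$ need not be torsion. We therefore first arrange that $\alpha$ become torsion after removing a divisor. The idea is to exploit that $k\geq 2$. Consider the open subsets $U=S\setminus Z$, with $Z$ the zero locus of a nonzero holomorphic function. We will try to show that the image of $\alpha$ in $\varinjlim_Z H^k(S\setminus Z,\Q)$ vanishes; by finiteness of dimension, it then suffices to kill a torsion class.

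\emph{Rationally.} We would argue via the generic point. Let $K$ be the field of meromorphic functions on an irreducible component of $S$ (reducing first to $S$ irreducible and reduced, since $Z$ may contain intersections of components and the singular locus). We have $\varinjlim_Z H^k(S\setminus Z,\Z/m)\cong \varinjlim H^k_{\et}(\Spec \cO(S)[1/f],\Z/m)$. This uses Theorem~\ref{thmainS} applied to the Stein spaces $S\setminus\{f=0\}$, whose Stein algebras are not exactly $\cO(S)[1/f]$; instead we use Theorem~\ref{thmain} with $X=\Spec\cO(S)[1/f]$ viewed inside appropriate compactifications, or the relative comparison. Then, by compatibility of étale cohomology with limits, this colimit maps to $H^k_{\et}(\Spec \mathrm{Frac},\Z/m)$, and we would need these classes to vanish there.

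\emph{Finite coefficients, main step.} For $\beta\in H^k(S,\Z/m)$ with $k\geq 1$, its restriction to the generic point $\eta$ of $\Spec\cO(S)$ lies in Galois cohomology of $\mathrm{Frac}(\cO(S))$. Its vanishing is the crux, and I expect it to be the main obstacle. We would try to prove it by using the integral lift: $\beta$ reduces from $\alpha\in H^k(S,\Z)$, so in Galois cohomology it is the reduction of a class in $H^k(\eta,\Z)$. For $k\geq 2$, $H^k_{\et}(\eta,\Z)\cong H^{k-1}(\eta,\Q/\Z)$ is torsion, hence $\alpha|_\eta$ is killed by some integer $N$. Choosing $m$ divisible by $N$ and tracking the Bockstein sequence $H^{k-1}(\cdot,\Z/m)\to H^k(\cdot,\Z)\xrightarrow{m}H^k(\cdot,\Z)$ on $U$, we get that $\alpha|_U=\delta(\gamma)$ for some $\gamma\in H^{k-1}(U,\Z/m)$ after shrinking $U$. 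Finally, for $k-1\geq 1$, $\gamma$ comes from $H^{k-1}(\eta,\Z/m)$ via Theorem~\ref{thmainS} and limits, and a Galois class on $\eta$ is killed by a finite extension; descending a finite étale cover to a branched analytic cover and using transfers only gives vanishing up to a multiple, so instead we choose $m$ large and use that $H^{k-1}(U,\Z/m)$ classes lifting to $\Z$ are what matter. At this stage we would iterate, using finiteness of cohomological dimension (the corollary that $H^k_{\et}$ vanishes above $\dim S$), to terminate the induction. The bookkeeping for the passage $\Z$ versus $\Z/m$ is where the argument must be made precise.
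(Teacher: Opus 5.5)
Your proposal has a genuine gap at each of its three stages.

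\textbf{The comparison you invoke is not available.} You use the isomorphism $\colim_a H^k(S\setminus\{a=0\},\Z/m)\cong\colim_a H^k_{\et}(\Spec(\cO(S)[\frac{1}{a}]),\Z/m)$. This is exactly the generic comparison \eqref{compagen}, which is only known for $S$ reduced of dimension $\leq 2$. Theorem \ref{thmain} requires properness and genuinely fails for open immersions (Remark \ref{remscex}~(ii)). Passing to a compactification does not help: you would need $(\RR j_*\Z/m)^{\an}\simeq \RR j^{\an}_*\Z/m$, and that is precisely what fails.

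\textbf{Your torsion step has no footing.} There is no map $H^k(S,\Z)\to H^k_{\et}(\eta,\Z)$. Comparison maps go from \'etale to singular cohomology, and with $\Z$ coefficients they are far from bijective. The mod $m^n$ reductions of $\alpha$ only give an element of $\varprojlim_n H^k_{\et}(\eta,\Z/m^n)$, a Tate-module-type group with no reason to be torsion. Worse, for $k\geq 2$ Bloch--Kato makes the reduction map $H^k_{\et}(\eta,\Z)\to H^k_{\et}(\eta,\Z/m)$ zero. So asserting that $\alpha$ mod $m$ comes from $H^k_{\et}(\eta,\Z)$ already assumes the vanishing at $\eta$ that you call the crux.

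The paper gets torsion-ness from topology and Oka theory instead. Rationally, $\alpha$ is pulled back from $\bS^k$ ($k$ odd), or is a top Chern class pulled back from a Grassmannian ($k$ even) (Lemmas \ref{lemodd} and \ref{lemeven}). Making this map holomorphic and transverse yields $m\alpha=(\iota^{\an})_*\beta$ for a Koszul-regular closed immersion $\iota:X\hookrightarrow\Spec(\cO(S))$ (Propositions \ref{propconiveauOka} and \ref{propconiveauuptomultiple}). Hence $\alpha$ is $m$-torsion on $U^{\an}$, where $U:=\Spec(\cO(S))\setminus X$.

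\textbf{You cannot lift $\delta$ to \'etale cohomology of $U$.} Even granting $\alpha|_{U^{\an}}=\partial_m(\delta)$, you need $\delta$ to come from $H^{k-1}_{\et}(U,\Z/m)$. Theorem \ref{thmainS} does not give this, because $U$ is not the spectrum of a Stein algebra and the comparison on $U$ can fail. The paper's key device (Proposition \ref{propBockstein}) is the cone $\LL$ of the Gysin morphism $\iota_*\Z/m[-2c]\to\Z/m$. This is a constructible complex on $\Spec(\cO(S))$, equal to $\Z/m$ on $U$, whose analytification is the topological Gysin cone (Lemma \ref{lemThomanal}). Theorem \ref{thproperpf} and the five lemma then lift a class of $H^{k-1}(S,\LL^{\an})$ that restricts to $\delta$ on $U^{\an}$. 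This works only because of the specific shape $m\alpha=(\iota^{\an})_*\beta$ obtained earlier.

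\textbf{The final killing needs a different input.} $\partial_m(\delta)$ is not killed by finite extensions and transfers, which as you note only work up to a multiple. Nor is it killed by an unspecified iteration: the vanishing of $H^k_{\et}$ above $\dim S$ concerns $\Spec(\cO(S))$, not $U$ or $\eta$. The needed input is Bloch--Kato in the form of Colliot-Th\'el\`ene--Voisin \cite{CTV}: torsion integral classes on complex algebraic varieties vanish on dense Zariski-open subsets. No Bloch--Ogus theory is available on $\Spec(\cO(S))$, so the paper proceeds as follows:
\begin{enumerate}[label=(\roman*)]
\item it descends $\tilde{\delta}$ to an affine variety $V$ by a limit argument;
\item it applies Theorem \ref{thCTV} to $\partial_m(\varepsilon)$ on $V$;
\item it pulls back to a dense open $U'\subset U$;
\item it takes $Z:=\{a=0\}$, with $a$ vanishing on the complement of $U'$.
\end{enumerate}
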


One can also require $Z$ to avoid a prescribed countable subset of $S$ (see Theorem~\ref{thintegral}). Theorem \ref{thintegralintro} is easy for $k=2$ (then $\alpha$ is the first Chern class of a holomorphic line bundle~$\cL$ on $S$ and one can let $Z$ be the zero locus of a section of~$\cL$), but nontrivial for~$k\geq 3$.
We refer the reader to \S\ref{parsketchconiveau} for a sketch of its proof.

In \S\ref{parunramified}, we define \textit{unramified cohomology groups} $H^k_{\nr}(S,\Z)$ in Stein geometry, by analogy with the corresponding groups studied in algebraic geometry (see \eg \cite{BO,CTV}). We deduce from Theorem \ref{thintegralintro} that the natural morphism 
$$H^k(S,\Z)\to H^k_{\nr}(S,\Z)$$ vanishes identically for all finite-dimensional Stein spaces and all $k\geq 2$ (see Theorem~\ref{thunramified}). This shows that unramified cohomology groups behave very differently in the Stein setting compared to the algebraic one.

Second, we prove that integral cohomology classes of degree $\geq 1$ on finite-dimensional Stein spaces come from algebraic varieties. 

 \begin{thm}[Corollary \ref{corcolimZ}]
\label{thmalgebraizationintro}
Let $S$ be a finite-dimensional Stein space. Fix a class $\alpha\in H^k(S,\Z)$ for some~$k\geq 1$. There exist an affine algebraic variety $V$ over $\C$, a holomorphic map $f:S\to V^{\an}$, and a class $\beta\in H^k(V^{\an},\Z)$ such that~$\alpha=f^*\beta$.
\end{thm}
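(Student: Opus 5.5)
We prove Theorem \ref{thmalgebraizationintro} by first handling finite coefficients through Theorem \ref{thmainS} and the compatibility of étale cohomology with filtered colimits of rings, and then passing to integral coefficients.

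\textbf{Step 1: writing $\cO(S)$ as a colimit.} The ring $A:=\cO(S)$ is the filtered colimit of its finitely generated $\C$-subalgebras $A_i$. Each $V_i:=\Spec(A_i)$ is an affine variety over $\C$. Evaluating holomorphic functions at points gives holomorphic maps $f_i:S\to V_i^{\an}$, and these are compatible with the transition maps.

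\textbf{Step 2: torsion coefficients.} Étale cohomology commutes with filtered colimits of affine schemes (SGA4, VII, 5.7), so
$$H^k_{\et}(\Spec(A),\Z/m)=\colim_i H^k_{\et}(V_i,\Z/m).$$
The comparison morphisms are natural. Combining Artin's theorem \eqref{compaArtin} on each $V_i$ with Theorem \ref{thmainS} on $S$, we obtain that
$$\colim_i H^k(V_i^{\an},\Z/m)\to H^k(S,\Z/m)$$
is an isomorphism for all $k\geq 0$ and $m\geq 1$. The same holds for $\Q/\Z$ coefficients, since cohomology commutes with the colimit over $m$.

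\textbf{Step 3: integral coefficients.} We want to show that $\colim_i H^k(V_i^{\an},\Z)\to H^k(S,\Z)$ is surjective for $k\geq1$. Use the Bockstein sequence attached to $0\to\Z\to\Q\to\Q/\Z\to0$, both on the $V_i$ and on $S$, and pass to the colimit; filtered colimits are exact.

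\emph{Torsion classes.} A torsion class $\alpha\in H^k(S,\Z)$ is the Bockstein of some class in $H^{k-1}(S,\Q/\Z)$. By Step 2 that class comes from some $V_i$, so $\alpha$ does too. Here $k\geq1$ is needed so that the degree $k-1$ makes sense.

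\emph{General classes.} Suppose we also know surjectivity with $\Q$ coefficients. Take $\alpha\in H^k(S,\Z)$ and choose $\beta\in H^k(V_i^{\an},\Z)$ such that $f_i^*\beta$ and $\alpha$ have the same image in $H^k(S,\Q)$. Then $\alpha-f_i^*\beta$ maps to zero in $H^k(S,\Q)$, so it is torsion, and the previous case applies. Classes pulled back from some $V_i$ stay pulled back from every $V_j$ with $j\geq i$, which gives compatibility in the colimit.

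\textbf{Step 4: rational coefficients (the main obstacle).} It remains to show that $\colim_i H^k(V_i^{\an},\Q)\to H^k(S,\Q)$ is surjective for $k\geq 1$. I would try to deduce this from Step 2 by a limit argument.

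Since $S$ has the homotopy type of a CW complex of dimension $\leq n$, with possibly infinitely generated homology, universal coefficients give
$$H^k(S,\Z/m)\cong\operatorname{Hom}(H_k(S),\Z/m)\oplus\operatorname{Ext}(H_{k-1}(S),\Z/m).$$
Write $G:=\colim_i H^k(V_i^{\an},\Z)$ and $H:=H^k(S,\Z)$. The Bockstein sequence identifies the cokernel of $G\to H$ with a subgroup of the cokernel on $\Q$ coefficients. The finite-coefficient isomorphisms then force $G\otimes\Z/m\to H\otimes\Z/m$ to be injective, and the torsion parts to match.

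In general this does not give surjectivity, since $H/G$ could be divisible, for instance $\Q$. Passing to Stein neighbourhoods does not help here: $S$ can have uncountably generated cohomology.

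The more robust approach is to use that $H^k(S,\Z)$ is generated by pullbacks. For $k=1$ one can proceed directly: a class in $H^1(S,\Z)$ is represented by a map $S\to \C^*$ coming from $\exp$, so it is pulled back from $\C^*$. For $k=2$, write $\alpha=c_1(\cL)$ and classify $\cL$ by a holomorphic map to a Grassmannian, approximated by an affine variety. For general $k$, I expect the actual proof to represent rational classes via sheaf cohomology of $\cO$ and $\cO^*$, or via the de Rham theorem with holomorphic forms that are polynomial in finitely many global functions. This is the step I expect to be hardest, and I would first try the holomorphic de Rham complex. On a Stein manifold, $H^k(S,\C)$ is computed by closed holomorphic $k$-forms, and each such form involves only finitely many functions $g_1,\dots,g_N\in\cO(S)$. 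So it is pulled back along $(g_1,\dots,g_N):S\to\C^N$, or rather to an affine variety where the relevant relations hold. Then one has to handle rationality of periods and the singular case by resolution.
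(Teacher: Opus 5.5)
You have the paper's architecture: Theorem~\ref{thmcolim} for finite coefficients, then Bocksteins and a diagram chase. But there is a genuine gap at Step~4, which remains speculation. You never show that integral classes on $S$ are, up to a nonzero multiple, pulled back from algebraic varieties.

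The missing idea is Proposition~\ref{propconiveauOka}, and it is homotopy-theoretic rather than de Rham-theoretic.
\begin{itemize}
\item For $k=2c+1$ odd, Serre's finiteness of $\pi_i(\bS^k)$ for $i\neq k$ gives the homotopy fibre of $\bS^k\to K(\Z,k)$ finite homotopy groups. Since $S$ has the homotopy type of a finite-dimensional CW complex, the finitely many obstructions to lifting a classifying map of $\alpha$ are killed by passing to a multiple $m\alpha$ (Lemma~\ref{lemodd}).
\item For $k=2c$ even, the same is done with the rational equivalence $\BGL_c(\C)\to\prod_i K(\Z,2i)$. This gives $m\alpha=f^*c_c(\cE_{c,N})$ for a map $f$ to a Grassmannian (Lemma~\ref{lemeven}).
\item The targets $\C^{c+1}\setminus\{0\}$ and $\Gr(c,N)$ are $\GL$-homogeneous, hence Oka. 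So Theorem~\ref{thOka} makes $f$ holomorphic, and Jouanolou's trick makes the target affine.
\end{itemize}
This yields surjectivity of $\colim_f H^k(V^{\an},\Z)\otimes\Q\to H^k(S,\Z)\otimes\Q$. The four lemma in \eqref{diagcolimcolim} then gives integral surjectivity.

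Your Step~3 is also set up with the wrong rational and torsion groups. This is why the de Rham route cannot work even in principle, because $H_*(S,\Z)$ may be infinitely generated.
\begin{itemize}
\item Your claim that Step~2 ``holds for $\Q/\Z$ coefficients'' is false. For $S=\C\setminus\mathbb{N}$ one has $H^1(S,\Q/\Z)=\prod_{\mathbb{N}}\Q/\Z$, but the colimit only produces elements of bounded order. The torsion case is easily repaired by using $\Z/m$ with $m\alpha=0$.
\item More seriously, $\ker(H^k(S,\Z)\to H^k(S,\Q))=\mathrm{Ext}(H_{k-1}(S,\Z),\Z)$, and this need not be torsion when $H_{k-1}$ is infinitely generated; for instance $\mathrm{Ext}(\Q,\Z)$ is a nonzero $\Q$-vector space. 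So ``$\alpha-f_i^*\beta$ maps to zero in $H^k(S,\Q)$, so it is torsion'' fails.
\item What is actually needed is surjectivity onto $H^k(S,\Z)\otimes\Q$, which is the paper's $\colim_m H^k(S,\Z)$ with multiplication transition maps. That is exactly the statement that every integral class has an algebraic nonzero multiple. Holomorphic forms only detect $H^k(S,\C)$ and are blind to the non-torsion classes in that $\mathrm{Ext}$ group.
\end{itemize}

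Separately, your de Rham sketch breaks down at closedness. The Zariski closure of the image of $S$ under finitely many holomorphic functions can be much larger than $S$ and does not inherit differential relations. For example, under $t\mapsto(t,e^t)$ the form $dy-y\,dx$ pulls back to $0$ but is nonzero on the Zariski closure $\C^2$. Rationality of periods would also remain unaddressed.
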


The theorem we prove is more precise (see Theorem \ref{thmcolimZ}), and has a counterpart with finite coefficients (see Theorem \ref{thmcolim}). We use the latter to endow~$H^k(S,\Z/m)$ with a canonical exhaustive increasing filtration which is functorial for holomorphic maps of Stein spaces, and which we call the \textit{Stein weight filtration} (see~\S\ref{parGS}).

\subsection{Previous works}
\label{parprevious}

Theorem \ref{thmainS} was already known for $k\leq 1$ (see \cite[Remark 6.7 (v)]{Steinsurface}) and for $k=2$ (see \cite[Theorem 4.2]{Brauerstein}), but is new for $k\geq 3$.

Prior to this article, two other comparison theorems in Stein geometry had appeared in \cite{Stein,Steinsurface}. To compare them with Theorem~\ref{thmain}, we state them in~\eqref{compaK} and~\eqref{compagen} below. Let $S$ be a finite-dimensional Stein space, let $X$ be an~$\cO(S)$\nobreakdash-scheme of finite presentation, and let $\LL$ be a constructible \'etale sheaf on~$X$.

Let $K\subset S$ be a compact subset that is \textit{Stein} in the sense that it admits a basis of Stein open neighborhoods in $S$ (for instance, $K$ could be the closed unit ball in~$S=\C^n$). Then the comparison morphisms
\begin{equation}
\label{compaK}
\underset{U}{\colim }\,H^k_{\et}(X_{\cO(U)},\LL)\to\underset{U}{\colim }\,H^k((X_{\cO(U)})^{\an},\LL^{\an}),
\end{equation}
where $U$ runs over all Stein open neighborhoods of~$K$ in $S$, are isomorphisms for all~$k\geq 0$ (see \cite[Theorem 6.1]{Stein}).

Assume that $S$ is reduced of dimension $\leq 2$. Then the comparison morphisms
\begin{equation}
\label{compagen}
\underset{a}{\colim }\,H^k_{\et}(X_{\cO(S)[\frac{1}{a}]},\LL)\to \underset{a}{\colim }\,H^k((X_{\cO(S)[\frac{1}{a}]})^{\an},\LL^{\an}),
\end{equation}
where $a\in \cO(S)$ runs over all nonzerodivisors, are isomorphisms for all $k\geq 0$ (see~\cite[Theorem 6.6]{Steinsurface}). It is not known if the restrictive hypothesis on the dimension of $S$ can be removed.

Applied to $X=\Spec(\cO(S))$, the comparison isomorphisms \eqref{compaK} and \eqref{compagen} compute in topological terms the \'etale cohomology groups of the ring $\cO(K)$ of germs of holomorphic functions on a Stein compact set $K\subset S$ (\resp of the ring $\cM(S)$ of meromorphic functions on $S$, assuming that $S$ is reduced of dimension $\leq 2$). In contrast, applying Theorem \ref{thmain} with $X=\Spec(\cO(S))$ computes the \'etale cohomology groups of the Stein algebra $\cO(S)$ itself (see Theorem \ref{thmainS}).

A crucial difference between the isomorphisms \eqref{compaK} and \eqref{compagen} on the one hand and Theorem \ref{thmain} on the other hand is that the former hold for all finitely presented~$\cO(S)$\nobreakdash-schemes $X$, whereas the latter fails in general for nonproper $X$ (see Remark \ref{remscex} (ii)). As we now explain, this complicates the proof of Theorem~\ref{thmain}.

The approach to the comparison theorems~\eqref{compaK} and~\eqref{compagen} taken in \cite{Stein,Steinsurface} originates from Artin's work~\cite[XI, \S4]{SGA43}. Its idea is to let $\varepsilon:X_{\et}\to (X^{\an})_{\cl}$ be the morphism from the small \'etale site of $X$ to the site of local isomorphisms of~$X^{\an}$ (so $\LL^{\an}=\varepsilon^*\LL$ and the comparison morphism is given by pullback by~$\varepsilon$), and to consider its Leray spectral sequence
\begin{equation}
\label{Lerayepsilon}
E_2^{p,q}=H^p_{\et}(X,\RR^q\varepsilon_*\LL^{\an})\Rightarrow H^{p+q}(X^{\an},\LL^{\an}).
\end{equation}
If $\LL\isoto \varepsilon_*\LL^{\an}$ and $\RR^q\varepsilon_*\LL^{\an}=0$ for $q>0$, then \eqref{Lerayepsilon} degenerates and yields isomorphisms $H^k_{\et}(X,\LL)\to H^k(X^{\an},\LL^{\an})$ for $k\geq 0$. To show that \eqref{compaK} (\resp \eqref{compagen}) are isomorphisms, we prove these assertions in the limit where $U$ gets smaller and smaller (\resp where~$a$ becomes more and more divisible). By design, if this strategy of proof applies to $X$, it also applies to all its Zariski-open subsets. Consequently, it cannot be used to prove a theorem that fails for nonproper $X$, such as Theorem~\ref{thmain}.

\subsection{Killing cohomology classes with finite coefficients on finite flat covers}
\label{parstrat}

We therefore had to devise a different, more global approach to Theorem \ref{thmain}. We henceforth focus on Theorem~\ref{thmainS} to which Theorem~\ref{thmain} can be reduced (as we explained in~\S\ref{parcompaintro}). 
To prove it in~\S\ref{parcompaS}, we compute both sides of
\begin{equation}
\label{compastrat}
H^k_{\et}(\Spec(\cO(S)),\Z/m)\to H^k(S,\Z/m)
\end{equation}
\`a la \v{C}ech using finite hypercoverings (this technique is referred to as \textit{cohomological descent} and goes back to \mbox{\cite[Vbis]{SGA42}}). The categories of finite hypercoverings of~$S$ and $\Spec(\cO(S))$ satisfying appropriate finiteness conditions are equivalent (this follows from \cite[Proposition~2.7]{Steinsurface}). Refining these hypercoverings ad libitum, one can deduce that \eqref{compastrat} is an isomorphism for all $k\geq 0$ from the easy case~$k=0$, if one knows how to kill classes in~$H^k_{\et}(\Spec(\cO(S)),\Z/m)$ and $H^k(S,\Z/m)$ with~$k>0$ on appropriate finite coverings. For $H^k_{\et}(\Spec(\cO(S)),\Z/m)$, this is possible by a theorem of Bhatt \cite[Theorem 1.1]{Bhatt}. That one can also achieve this for~$H^k(S,\Z/m)$ results from 
the following theorem, which is the heart of the proof of Theorem \ref{thmainS}.

\begin{thm}[Theorem \ref{thkill}]
\label{thkillintro}
Let $S$ be a finite-dimensional Stein space. For all~$k,m\geq 1$ and all~$\alpha\in H^k(S,\Z/m)$, there exist $d\geq 1$ and a holomorphic map~${p:\wS\to S}$ that is finite flat of degree $d$ such that $p^*\alpha=0$ in $H^k(\wS,\Z/m)$. 
\end{thm}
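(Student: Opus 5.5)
The plan is to argue by induction on $k$, and inside that by induction on $\dim S$. The basic tool for building finite flat covers is the following observation, which uses that $S$ is Stein. A monic polynomial $P(T)=T^d+a_1T^{d-1}+\dots+a_d$ with $a_i\in\cO(S)$ defines a finite flat map $\{P=0\}\to S$ of degree $d$. Since $\cO(S)\to\cO(Z)$ is surjective for closed analytic $Z\subset S$ (Cartan's Theorem B), any such polynomial over $\cO(Z)$ lifts to one over $\cO(S)$. So finite flat covers of closed subspaces given by monic equations extend to finite flat covers of $S$. Moreover, iterating covers (a finite flat cover of a finite flat cover) is again finite flat, with degrees multiplying. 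So it suffices to kill $\alpha$ step by step. One can also reduce to $S$ reduced, and, working with finitely many components at a time, assume that irreducible components have bounded dimension.

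For $k=1$ the statement is immediate. A class $\alpha\in H^1(S,\Z/m)$ is a $\Z/m$-torsor, i.e.\ a topological covering $p:\wS\to S$ of degree $m$. This covering inherits a complex structure making $p$ finite étale, hence finite flat. It is again Stein, and $p^*\alpha=0$ because the pulled-back torsor has a tautological section. For general $k\geq 2$, the first step is to reduce $\alpha$ to a class supported on a nowhere dense analytic subset. I would try to find a nowhere dense closed analytic subset $Z\subset S$ such that, after a finite flat cover, $\alpha|_{S\setminus Z}=0$. I expect to obtain this from the local comparison theorem \eqref{compaK} combined with Bhatt's theorem \cite[Theorem 1.1]{Bhatt}, as follows.
\begin{itemize}
\item Exhaust $S$ by Stein compacts $K$.
\item On a Stein neighbourhood $U$ of $K$, write $\alpha|_U$ as an \'etale class on $\Spec(\cO(U))$.
\item Use Bhatt's theorem to kill it on a finite cover of $\Spec(\cO(U))$. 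Since Bhatt's covers are only finite (not necessarily flat), one must control them by replacing them with flat ones outside a nowhere dense analytic locus; this is where $Z$ arises.
\item Approximate the resulting monic equations by global ones on $S$ via Runge-type approximation for Stein spaces.
\end{itemize}
Because cohomology with finite coefficients is locally constant in families, a sufficiently good approximation should give a global finite flat cover that still kills $\alpha$ over $K\setminus Z$.

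Once $\alpha$ vanishes on $S\setminus Z$, the class $\alpha$ lifts to $H^k_Z(S,\Z/m)$. I would then shrink the support. Using a Whitney stratification of $Z$ and excision/purity along the smooth strata, the class in $H^k_Z(S,\Z/m)$ is controlled by classes of degree $<k$ on strata of $Z$, roughly by Thom isomorphisms for the normal structure, possibly after a finite cover making the relevant local systems trivial. These lower-degree classes are killed by the induction hypothesis (on $k$, or on dimension) on the Stein spaces $Z$ and its strata. The resulting finite flat covers of $Z$ are extended to $S$ by the monic-lifting observation above. Since $\dim Z<\dim S$, and $H^k$ of an $n$-dimensional Stein space vanishes for $k>n$ by \cite{Hamm}, the induction terminates.

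The main obstacle is the passage from local to global: turning Bhatt's finite covers over neighbourhoods of Stein compacts into a single finite flat cover of all of $S$, of bounded degree $d$. An exhaustion produces infinitely many covers, and their degrees must not blow up. I would first try to handle this by working on a countable locally finite family of compacts whose images are disjoint, and combining them with one global monic polynomial. The coefficients of this polynomial would be chosen by Cartan B/interpolation, with $d$ bounded using the finite-dimensionality of $S$ through the homotopy dimension bound. If this fails, the fallback is to reorganise the argument around the support-shrinking step alone, which only needs covers of lower-dimensional subspaces.
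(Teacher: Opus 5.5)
\paragraph{Gap in the first step.} The gap is exactly where you locate the main obstacle, and your proposed remedies do not close it.

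Combining \eqref{compaK} with Bhatt's theorem gives, for each Stein compact $K\subset S$, a finite (not flat) cover of a neighbourhood of $K$ that kills $\alpha$. Its degree is uncontrolled as $K$ grows. This is essentially the compact version \cite[Proposition 3.4]{Stein}, and the paper explains why compact-by-compact arguments cannot produce a global cover.

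Your fix does not work. Killing $\alpha$ on each member of a locally finite family of disjoint compacts says nothing about $\alpha$ on $S$: the class also lives on the regions between them, as Mayer--Vietoris shows. Finite-dimensionality of $S$ gives no bound on the degrees of Bhatt's covers. Perturbing monic equations only preserves the topology of a cover where it is \'etale over $K$, so the moving branch locus would also need control.

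The missing idea is a uniform, global source of covers. The paper first kills the obstructions in $H^{i+1}(S,\pi_i(F_{k,m}))$, which have finite coefficients, by downward induction on $k$; this induction terminates because $S$ has homotopy dimension at most $\dim S$. It then realizes $\alpha=\chi^*\alpha_{k,m}$ for a \emph{holomorphic} map $\chi:S\to\Omega_{k,m}$. Here $\Omega_{k,m}=\C^{l+2}\setminus K_{k,m}$ is an Oka manifold with the homotopy type of $M(\Z/m,k)$, by Kusakabe's theorem and the Oka principle. Finally it pulls back the single fixed degree-$m$ cover $p_{k,m}$, which kills $\alpha_{k,m}$ when $k\geq 5$.

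The cases $k\leq 4$ need separate arguments. For $k=2$ the paper uses Brauer groups and finite flat multisections of Severi--Brauer spaces. For $k\in\{3,4\}$ it reduces to integral classes and uses $\BSL_2(\C)$ or $\C^2\setminus\{0\}$.

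\paragraph{Gap in the support-shrinking step.} This step is not viable as written either.

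Even for $S$ smooth of dimension $n$, one has $H^k_Z(S,\Z/m)\simeq H^{\BM}_{2n-k}(Z,\Z/m)$. When $Z$ is singular, Thom isomorphisms only turn this into cohomology of the locally closed strata. These strata are in general neither closed in $S$ nor Stein, so your induction hypothesis does not apply to them. Replacing a stratum by a Stein open subset such as $Z\setminus\{a=0\}$ does not help, because finite flat covers of it need not extend to $S$: their coefficients may be essentially singular along $\{a=0\}$.

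Your extension device also only handles covers given by towers of monic polynomials. The induction hypothesis returns arbitrary finite flat maps $T\to Z$. Embedding $T$ into a product of characteristic-polynomial covers goes the wrong way, since a class that dies on $T$ need not die on a larger cover containing $T$.

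So neither half of your induction currently has a working mechanism, and the proposal does not yet amount to a proof.
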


A variant of Theorem \ref{thkillintro}, restricted to a Stein compact subset $K\subset S$, and only requiring that $p$ be finite surjective (instead of finite flat of degree $d\geq 1$), was established in \cite[Proposition 3.4]{Stein} as a key step towards the comparison theorem~\eqref{compaK}. However, the argument in \cite{Stein} does not ensure the flatness~of~$p$, which is essential in the application to Theorem \ref{thmainS}. 
%Comment that in algebraic geometry, not known?
More importantly, its mechanism of proof (induction on the cohomological degree based on Mayer--Vietoris exact sequences) cannot work without a compactness hypothesis (the degree of the map~$p$ constructed in \cite{Stein} depends on the cardinality of a well-chosen open covering of~$K$; this number can be chosen finite by compactness of~$K$ but grows with~$K$).

We must therefore rely on an entirely different argument. Assume that there exists a complex manifold $\Omega$ with the homotopy type of (a sufficiently big skeleton of) the Eilenberg--MacLane space $K(\Z/m,k)$. Then $\alpha\in H^k(S,\Z/m)$ comes from the tautological class $\alpha_{\Omega}\in H^k(\Omega,\Z/m)$ by pullback by a continuous map~${f:S\to\Omega}$. Assume moreover that~$\Omega$ is \textit{Oka} (in Forstneri\v{c}'s sense \cite[Definition~1.2]{ForstnericOka} recalled in \S\ref{parprelim}), so any continuous map from a Stein space to $\Omega$ is homotopic to a holomorphic map. This allows us to choose $f$ holomorphic, which reduces the problem of killing~$\alpha$ on a finite flat cover to the same problem for~$\alpha_{\Omega}$. Given a concrete description of~$\Omega$, one can hope to solve this problem in a direct manner.

Showing the existence of Oka manifolds with a prescribed homotopy type is hard, and we do not know how to construct the required Oka approximations of Eilenberg--MacLane spaces. Instead, we apply the above strategy to an Oka manifold~$\Omega_{k,m}$ of the homotopy type of the Moore space $M(\Z/m,k)$ (see \S\ref{parMoore}). In this case, the class~$\alpha$ may not come from the tautological class~${\alpha_{k,m}\in H^k(\Omega_{k,m},\Z/m)}$ by pullback by a continuous map $f:S\to\Omega_{k,m}$. However, the obstructions to the existence of~$f$, which lie in higher degree cohomology groups of $S$, can be killed using \textit{downward} induction on the cohomological degree, allowing the proof to proceed.

To construct the $\Omega_{k,m}$, we rely on a theorem of Kusakabe \cite[Corollary~1.3]{Kusakabe} (see also the proof of Forstneri\v{c} and Wold \cite[Theorem 1.2]{FW}) stating that~$\C^N\setminus K$ is Oka if~$K$ is a polynomially convex compact subset of $\C^N$ for some~${N\geq 2}$. When~${k\geq 3}$, for appropriate choices of $N$ and~$K$, the Oka manifold ${\Omega_{k,m}:=\C^N\setminus K}$ has the homotopy type of~$M(\Z/m,k)$ (see \S\ref{parOka}). In addition, if $k\geq 5$, a careful choice of the embedding of~$K$ as a polynomially convex subset of~$\C^N$ ensures that~${\alpha_{k,m}\in H^k(\Omega_{k,m},\Z/m)}$ can be killed on a finite flat cover of~$\Omega_{k,m}$ (see \S\ref{parfiniteflatOka}). This completes the proof of Theorem~\ref{thkillintro} for~$k\geq 5$ (see \S\ref{par5}).

When $k\in\{3,4\}$, we do not know how to kill $\alpha_{k,m}$ on a finite flat cover of~$\Omega_{k,m}$. The best we can do, by means of a delicate choice of the embedding of~$K$ as a polynomially convex subset of $\C^N$, is to ensure that $\alpha_{k,m}$ lifts to an integral cohomology class, after pullback by a finite flat cover of $\Omega_{k,m}$. This allows us to reduce Theorem \ref{thkillintro} for $k\in\{3,4\}$ to the case where $\alpha$ is the reduction modulo $m$ of an integral cohomology class, which can be dealt with by further applications of Oka theory (see \S\ref{par34}). When $k\in\{1,2\}$, we cannot rely on the above strategy. Instead, we give direct proofs of Theorem \ref{thkillintro} (the case $k=1$ is trivial and, when~$k=2$, we exploit the study of Brauer groups of Stein algebras carried out in \cite{Brauerstein}; see \S\ref{par12}).

\subsection{Killing integral cohomology classes on Zariski-open subsets}
\label{parsketchconiveau}

We conclude this introduction by sketching the proof of Theorem \ref{thintegralintro}. 

A crucial input is a consequence of the Bloch--Kato conjecture (proved by Rost and Voevodsky~\cite{Voevodskyl}) according to which, if $V$ is an algebraic variety over $\C$ and~${\alpha\in H^k(V^{\an},\Z)}$ is torsion, then $V$ is covered by Zariski-open subsets~${V'\subset V}$ with~${\alpha|_{(V')^{\an}}=0}$ (Colliot-Th\'el\`ene--Voisin \cite[Th\'eor\`eme~3.1]{CTV}; in the smooth case, the argument is due to Bloch, see \cite[End of Lecture~5]{Blochbook} and \cite[Proof of Theorem~1\,(ii)]{BSrinivas}). We actually need a slight refinement of~\cite[Th\'eor\`eme~3.1]{CTV} where~$(V')^{\an}$ is required to contain a prescribed countable subset $\Xi$ of $V^{\an}$ (see~\S\ref{parBCTV}); this is essential for the application to unramified cohomology).

As a first step towards Theorem \ref{thintegralintro}, we find an affine dense open subset~$U$ of~$\Spec(\cO(S))$ such that $\alpha|_{U^{\an}}$ is~$m$\nobreakdash-torsion for some $m$ (see~\S\ref{paruptomultiple}; the argument uses Oka theory in the spirit of the strategy explained in~\S\ref{parstrat}). Then~${\alpha|_{U^{\an}}\in H^k(U^{\an},\Z)}$ is the image of a class ${\delta\in H^{k-1}(U^{\an},\Z/m)}$ by the Bockstein morphism. If one could lift $\delta$ to a class~${\tilde{\delta}\in H^{k-1}_{\et}(U,\Z/m)}$ along the comparison morphism
\begin{equation}
\label{companoniso}
H^{k-1}_{\et}(U,\Z/m)\to H^{k-1}(U^{\an},\Z/m),
\end{equation}
then one could further lift $\tilde{\delta}$ to a class in the \'etale cohomology of some affine algebraic variety $V$ over $\C$, where one could conclude by exploiting the above-mentioned \cite[Th\'eor\`eme 3.1]{CTV} (see~\S\ref{parconiveau}). Unfortunately, the morphism \eqref{companoniso} may not be surjective (Theorem \ref{thmain} does not apply because the inclusion $U\subset\Spec(\cO(S))$ is not proper). To overcome this obstacle, we ensure, after reducing to the case where~$S$ is a connected Stein manifold and through a careful choice of~$U$, that $\tilde{\delta}$ lifts to a class in the cohomology of a constructible complex of \'etale sheaves on~$\Spec(\cO(S))$ (see~\S\ref{parapplcompa}). There, our comparison theorem (Theorem \ref{thmain}) is applicable.

\subsection{Structure of the article}

We construct and study some Oka manifolds with prescribed homotopy type in Section \ref{secOka}, and we use them in Section \ref{seckill} to kill cohomology classes of Stein spaces with finite coefficients on finite flat covers, thereby proving Theorem \ref{thkillintro}. This result is applied in Section \ref{seccompa} to prove our main comparison theorem (Theorem \ref{thmain}), as well as a $\Gal(\C/\R)$-equivariant extension. Applications of the comparison theorem to the singular cohomology of Stein spaces appear in Section~\ref{secconiveau} (where we prove Theorems~\ref{thintegralintro} and study unramified cohomology) and Section \ref{secweight} (where we prove Theorem \ref{thmalgebraizationintro} and define the Stein weight filtration).

\subsection{Conventions}

We refer to \cite[1, \S 1.5]{GRCoherent} for the definition of \textit{complex spaces}. We require them to be second-countable and Hausdorff, but not necessarily reduced or finite-dimensional. By a continuous map from a complex space $S$ to a topological space, we always mean a map from the underlying topological space of $S$, thus disregarding nilpotent elements of $\cO_S$. A complex space $S$ is \textit{Stein} if~$H^k(S,\cF)=0$ for all $k\geq 1$ and all coherent sheaves $\cF$ on $S$ (see \cite{GRStein}).

An algebraic variety over $\C$ is a separated scheme of finite type over $\C$.

\section{Oka models of Moore spaces}
\label{secOka}

In this section, we construct the Oka manifolds $\Omega_{k,m}$ (see Proposition \ref{propOOka}) and study the behavior of their tautological cohomology classes $\alpha_{k,m}\in H^k(\Omega_{k,m},\Z/m)$ on finite flat covers (see Propositions \ref{liftentier} and \ref{kill}).

\subsection{Preliminaries on Oka theory}
\label{parprelim}

Theorem \ref{thOka} is (a particular case of) a result of  Forstneri\v{c} \cite{ForstnericOka}, which builds on earlier work of Gromov \cite{Gromov}. Our goal in~\S\ref{parprelim} is to dispel any doubts regarding its validity when the Stein space~$S$ is possibly not reduced, for lack of an appropriate reference. Following Forstneri\v{c} \cite[Definition~1.2]{ForstnericOka}, we say that a complex manifold $\Omega$ is \textit{Oka} if for all convex compact subsets $K\subset\C^N$ and all open neighborhoods $U$ of~$K$ in $\C^N$, any holomorphic map~$U\to \Omega$ can be approximated uniformly on $K$ by holomorphic maps~$\C^N\to \Omega$. 

\begin{thm}
\label{thOka}
Let $S$ be a Stein space. Let $p:T\to S$ be a locally trivial holomorphic fibration with Oka fibers. Then any continuous section of $p$ is homotopic to a holomorphic section of~$p$.
\end{thm}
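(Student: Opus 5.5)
The plan is to reduce Theorem \ref{thOka} to the case of reduced Stein spaces, where it is exactly Forstneri\v{c}'s theorem \cite{ForstnericOka}, stated there for reduced Stein spaces and locally trivial holomorphic fibrations with Oka fibers. The only issue is what a holomorphic section of $p$ means when $S$ is not reduced: a section $s$ of $p$ is then a morphism of complex spaces $S\to T$ with $p\circ s=\Id_S$. By our convention, continuous maps disregard nilpotents, so a continuous section of $p$ is a continuous section of $p_{\red}:T_{\red}\to S_{\red}$. Here $T_{\red}=T\times_S S_{\red}$, since $T\to S$ is locally a product with a manifold fiber. So the argument has two steps.

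\emph{Step 1 (reduced case).} Since $S_{\red}$ is a reduced Stein space and $T\times_SS_{\red}\to S_{\red}$ is a locally trivial holomorphic fibration with Oka fibers, Forstneri\v{c}'s theorem gives a homotopy from the given continuous section to a holomorphic section $s_0:S_{\red}\to T$.

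\emph{Step 2 (extending across nilpotents).} I would show that $s_0$ extends to a holomorphic section $s:S\to T$. The homotopy class is unaffected, because $s$ and $s_0$ have the same underlying continuous map. The extension should be built through the nilpotent thickenings. Let $\cI\subset\cO_S$ be the nilradical and $S_n$ the subspace defined by $\cI^{n+1}$. For finite-dimensional $S$, and more generally locally, $\cI$ is nilpotent on each relatively compact open set, so $S$ is the increasing union of the $S_n$ locally.

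The extension from $S_n$ to $S_{n+1}$ is a standard deformation-theoretic problem. Since $p$ is smooth (a submersion), local lifts exist: locally $T=U\times F$ with $F$ a manifold, and sections correspond to maps $U\to F$. These extend across square-zero thickenings, because maps to a manifold are given locally by coordinates in $\C^r$, and functions lift from $\cO_{S_n}$ to $\cO_{S_{n+1}}$. The set of local lifts is a torsor under the coherent sheaf $\cH om(s_n^*\Omega^1_{T/S},\cI^{n+1}/\cI^{n+2})$, which is coherent on $S$. The obstruction to a global lift therefore lies in $H^1$ of a coherent sheaf on the Stein space $S$, which vanishes. This yields compatible sections $s_n$ of $T\times_SS_n\to S_n$.

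If $\cI$ is globally nilpotent, as happens when $S$ is finite-dimensional and the nilpotency order is bounded, the process stops after finitely many steps. Otherwise I would use that a morphism $S\to T$ is determined locally on $S$. On each relatively compact open set of $S$, some $S_n$ coincides with $S$, so the compatible system $(s_n)$ glues to a morphism $s:S\to T$.

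The main obstacle I expect is Step 2 in the non-finite-dimensional case. There I must check carefully that the $s_n$ stabilize locally, so that they define a genuine morphism of complex spaces. I must also check that the torsor and obstruction description is valid for a possibly non-reduced base, using coherence of $\cI^{n+1}/\cI^{n+2}$ and Cartan's Theorem B.
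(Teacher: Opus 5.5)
Your proposal is correct and follows essentially the same route as the paper. The paper also reduces to the reduced case, where Forstneri\v{c}'s theorem applies, and then extends the holomorphic section from $S^{\red}$ across the square-zero thickenings defined by powers of the nilradical (Lemma \ref{lemlift} and Proposition \ref{proplift}). As in your argument, the obstruction lies in $H^1$ of the coherent sheaf $s_n^*T_{T/S}\otimes\cI^{n+1}/\cI^{n+2}$, which vanishes on the Stein space $S_n$, and the resulting sections stabilize locally on $S$.
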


\begin{proof}
By Proposition \ref{proplift} below, we may assume that $S$ is reduced. One can then apply the more general \cite[Theorem 1.1]{ForstnericOka}.
\end{proof}

We could not find a reference for the following lemma.

\begin{lem}
\label{lemlift}
Let $p:T\to S$ be a submersion between complex spaces. Let $S'\subset S$ be a closed subspace defined by a coherent ideal sheaf $\cI\subset\cO_S$ with $\cI^2=0$. Let~${f':S'\to T}$ be a holomorphic section of $p$ over $S'$. If $H^1(S',(f')^*T_{T/S}\otimes_{\cO_{S'}}\cI)=0$,
% (one can view $\cI$ as a coherent sheaf on~$S'$ because $\cI^2=0$)
then $f'$ can be extended to a holomorphic section $f:S\to T$ of $p$.
\end{lem}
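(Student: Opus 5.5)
The plan is the standard deformation-theoretic argument: local liftings exist, any two differ by a section of the sheaf $\cF:=(f')^*T_{T/S}\otimes_{\cO_{S'}}\cI$, and the obstruction to gluing them lies in $H^1(S',\cF)$, which vanishes by hypothesis. Since $\cI^2=0$, $\cI$ is an $\cO_{S'}$-module, and $S$ and $S'$ share the same underlying topological space, so $\cF$ is a coherent sheaf on that common space.

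\emph{Step 1 (local extensions).} Since $p$ is a submersion, every point of $T$ has a neighborhood $W$ such that $p|_W$ is isomorphic over an open $V\subset S$ to the projection $V\times B\to V$, with $B\subset\C^r$ an open ball. Choose an open cover $(U_i)$ of $S$ such that $f'(U_i\cap S')$ lies in such a chart $W_i$ (shrinking $U_i$ using continuity of $f'$). In these coordinates $f'|_{U_i}$ is given by $r$ holomorphic functions on $U_i\cap S'$, i.e.\ sections of $\cO_{S'}=\cO_S/\cI$. After shrinking the $U_i$ to Stein opens, they lift to sections of $\cO_S$ over $U_i$, because $H^1(U_i,\cI)=0$ by Cartan's Theorem B. Shrinking further so that the lifted functions take values in $B$, we obtain local sections $f_i:U_i\to T$ of $p$ extending $f'|_{U_i}$.

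\emph{Step 2 (torsor structure).} On $U_{ij}$, $f_i$ and $f_j$ are two extensions of the same map across the square-zero thickening $S'\subset S$. I will check that the set of extensions of $f'|_U$ to sections $U\to T$ of $p$, which maps into $W$ where needed, is a torsor under $\Gamma(U,\cF)$. In a chart $V\times B$, two extensions differ by an $r$-tuple of sections of $\cI$, and $\cO_T$-linear changes of fibre coordinates act on this difference via the Jacobian. Because $\cI^2=0$, Taylor expansion to first order shows that the difference is well defined as a section of $(f')^*T_{T/S}\otimes\cI$, that it is independent of the chart, and that the action is free and transitive. This identification is the main technical point. The delicate part is making it intrinsic: I would phrase it through derivations, namely that for two extensions $f_i,f_j$ the map $f_i^\sharp-f_j^\sharp:f'^{-1}\cO_T\to\cI$ is an $\cO_S$-linear derivation. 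Such derivations are $\mathcal{H}om(f'^*\Omega_{T/S},\cI)$, which equals $(f')^*T_{T/S}\otimes\cI$ since $\Omega_{T/S}$ is locally free for a submersion. This argument avoids coordinates.

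\emph{Step 3 (gluing).} Set $c_{ij}:=f_i-f_j\in\Gamma(U_{ij},\cF)$. This is a \v{C}ech $1$-cocycle. The vanishing $H^1(S',\cF)=0$ gives vanishing of \v{C}ech $H^1$ for this cover, possibly after refinement (\v{C}ech $H^1$ always injects into sheaf $H^1$). So $c_{ij}=g_i-g_j$ with $g_i\in\Gamma(U_i,\cF)$. Replacing $f_i$ by $f_i-g_i$ via the torsor action yields local extensions that agree on overlaps. They glue to a holomorphic section $f:S\to T$ of $p$ extending $f'$.
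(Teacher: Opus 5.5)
Your proposal is correct and follows essentially the paper's proof. The paper likewise lifts $f'$ coordinate by coordinate on Stein opens, takes the cocycle $\alpha_{ij}\colon a(f')^*db\mapsto a(\tf_i^*b-\tf_j^*b)$ in $(f')^*T_{T/S}\otimes\cI$ (exactly your derivation $\tf_i^\sharp-\tf_j^\sharp$), writes it as $\beta_i-\beta_j$ using $H^1=0$, and corrects each local lift by $\beta_i$ before gluing. Your two caveats are unnecessary: lifted functions automatically take values in $B$ because values at points do not change modulo the nilpotent ideal $\cI$, and \v{C}ech $H^1$ of any cover already injects into sheaf $H^1$, so no refinement is needed.
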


\begin{proof}
Let $(U_i)_{i\in I}$ be a Stein open cover of $S$. Let $U_i'\subset U_i$ be the closed subspace defined by $\cI|_{U_i}$. After refining the cover $(U_i)_{i\in I}$, we may assume that $f'(U'_i)$ is included in a chart~$V_i\subset T$ with $V_i\simeq U_i\times W_i$, where $W_i$ is an open subset of~$\C^N$ with coordinates $(z_1^{(i)},\dots, z_N^{(i)})$, and where $p|_{V_i}:V_i\to U_i$ is given by the first projection~${V_i\simeq U_i\times W_i\to U_i}$. Since~$U_i$ is Stein, the map $\cO(U_i)\to\cO(U_i')$ is onto. We may thus lift the holomorphic section $f'|_{U_i'}:U_i'\to U'_i\times W_i$ of~$p$ over~$U'_i$, one coordinate of~$W_i$ at a time, to a holo\-morphic section $\tf_i:U_i\to U_i\times W_i$ of~$p$ over~$U_i$.

  On $U'_{i,j}:=U'_i\cap U'_j$, the assignment $a(f')^*db\mapsto a(\tf_i^*b-\tf_j^*b)$ (where $a$ and~$b$ are local holomorphic functions on $U_{i,j}'$ and $T$) induces a morphism of coherent sheaves $(f')^*\Omega^1_{T/S}|_{U'_{i,j}}\to \cI|_{U'_{i,j}}$. View it as an element ${\alpha_{i,j}\in H^0(U'_{i,j}, (f')^*T_{T/S}\otimes_{\cO_{S'}}\cI)}$. The family $(\alpha_{i,j})$ forms a cocycle. By vanishing of $H^1(S',(f')^*T_{T/S}\otimes_{\cO_{S'}}\cI)$, there exist ${\beta_i\in H^0(U'_i,(f')^*T_{T/S}\otimes_{\cO_{S'}}\cI)}$ with~${\alpha_{i,j}=\beta_i-\beta_j}$. View $\beta_i$ as a morphism of sheaves $(f')^*\Omega^1_{T/S}|_{U'_{i}}\to \cI|_{U'_{i}}$. Let ${f_i:U_i\to V_i\simeq U_i\times W_i}$ be the holomorphic section of $p$ over $U_i$ such that $(f_i)^*z_k^{(i)}=(\tf_i)^*z_k^{(i)}-\beta_i((f')^*dz_k^{(i)})$ for $1\leq k\leq N$. Our choices imply that the~$f_i$ glue to a holomorphic section $f:S\to T$ of $p$.
\end{proof}

\begin{prop}
\label{proplift}
Let $p:T\to S$ be a submersion of complex spaces with $S$ Stein. Any holomorphic section $f':S^{\red}\to T$ of $p$ over $S^{\red}$ can be extended to a holomorphic section $f:S\to T$.
\end{prop}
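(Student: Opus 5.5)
The plan is to extend $f'$ step by step along the nilradical filtration of $\cO_S$, applying Lemma \ref{lemlift} at each stage, and then to pass to the limit. Let $\cN\subset\cO_S$ be the nilradical, a coherent ideal sheaf. Set $S_n\subset S$ to be the closed subspace defined by $\cN^{n}$, so that $S_1=S^{\red}$. The inclusions $S_1\subset S_2\subset\cdots\subset S$ have the property that $S_n$ is defined in $S_{n+1}$ by the coherent ideal $\cN^n/\cN^{n+1}$, whose square vanishes. Each $S_n$ is Stein, being a closed subspace of the Stein space $S$.

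We build sections $f_n:S_n\to T$ of $p$ by induction, compatible under restriction, starting from $f_1=f'$. Given $f_n$, apply Lemma \ref{lemlift} to the submersion $p^{-1}(S_{n+1})\to S_{n+1}$, the square-zero ideal $\cI=\cN^n/\cN^{n+1}$, and the section $f_n$. The sheaf $f_n^*T_{T/S}\otimes_{\cO_{S_n}}\cI$ is coherent on $S_n$: the relative tangent sheaf of a submersion is locally free, and its pullback by $f_n$ is locally free of finite rank on $S_n$, hence coherent. Since $S_n$ is Stein, Cartan's Theorem B (our convention for Steinness) gives the vanishing of its $H^1$. Lemma \ref{lemlift} therefore provides an extension $f_{n+1}$.

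It remains to conclude, which is the main obstacle, since $S$ need not be finite-dimensional. On any relatively compact open subset $U\Subset S$, $\cN^n|_U=0$ for $n\gg0$, by coherence and local nilpotence: on a compact set finitely many local bounds suffice. So $S_n\cap U=U$ as complex spaces for large $n$, and the $f_n|_U$ stabilize. Since the $f_n$ are compatible under restriction, these stable values agree on overlaps. Exhausting $S$ by relatively compact opens, they glue to a holomorphic section $f:S\to T$ extending $f'$. In the finite-dimensional case, $\cN^n=0$ globally for some $n$, and the induction simply terminates.

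One technical point should be checked: Lemma \ref{lemlift} is stated for a submersion $T\to S$ with $S'\subset S$, and here we use it with $S_{n+1}$ in the role of $S$. This is legitimate because base change of a submersion is a submersion, and $T_{T/S}$ restricts to the relative tangent sheaf of the base change.
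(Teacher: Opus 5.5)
Your proposal is correct and is essentially the paper's proof: you lift along the nilradical filtration using Lemma \ref{lemlift}, with the $H^1$-vanishing coming from Theorem B on the Stein spaces $S_n$, and you conclude because $\cN$ is locally nilpotent, so the $f_n$ stabilize locally and glue. One aside is wrong but harmless: finite-dimensionality does not force $\cN^n=0$ globally (the zero-dimensional space of Remark \ref{remnonsurj} is a counterexample), and your local stabilization argument does not need that claim.
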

%[https://arxiv.org/pdf/0705.0591, Prop 4.1] is not far, but reducedness hypotheses there !

\begin{proof}
Let $\cN\subset\cO_S$ be the nilradical of $\cO_S$. Let~$S_k\subset S$ be the subspace defined by the coherent ideal sheaf $\cN^{k+1}\subset\cO_S$, so $S_0=S^{\red}$. Set $f_0:=f'$. Apply Lemma~\ref{lemlift} inductively to construct holomorphic sections $(f_k:S_k\to T)_{k\geq 1}$ of $p$ over $S_k$ such that~${(f_k)|_{S_{k-1}}=f_{k-1}}$.  Locally on $S$, there exists $k\geq 0$ with~$\cN^k=0$. It follows that the~$f_k$ stabilize, giving rise to the desired holomorphic section $f:S\to T$~of~$p$.
\end{proof}

\subsection{Polynomially convex simplicial complexes}

A compact subset $K\subset \C^n$ is said to be \textit{polynomially convex} if $K$ is equal to its \textit{polynomial hull}
$$\widehat{K}:=\{z\in \C^n\mid |P(z)|\leq\sup_K|P| \textrm{ for all } P\in\C[z_1,\dots,z_n]\}.$$
If $\Sigma$ is a simplicial complex, we let~$\Sigma_{\leq d}$ denote the $d$-skeleton of $\Sigma$. The next proposition is a consequence of a theorem of Vodozov and Zaidenberg \cite{VZ}. 

\begin{prop}
\label{propVZ}
Let $\Sigma$ be a finite simplicial complex of dimension $\leq n$. Then there exists an injective continuous map $f:\Sigma\to\C^{n+1}$ such that:
\begin{enumerate}[label=(\roman*)] 
\item 
\label{Si}
$f$ is piecewise linear on the first barycentric subdivision of $\Sigma$;
\item 
\label{Sii}
$f(\Sigma)\cap\C^d=f(\Sigma_{\leq d-1})$ in $\C^{n+1}$ for $0\leq d\leq n+1$;
\item
\label{Siii}
$f(\Sigma)$ is a polynomially convex subset of $\C^{n+1}$.
\end{enumerate}
\end{prop}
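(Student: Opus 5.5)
We will build $f$ skeleton by skeleton. At each stage the $d$-skeleton will be embedded in the coordinate subspace $\C^{d+1}$ in a way that keeps polynomial convexity. The basic tool is the theorem of Vodozov and Zaidenberg \cite{VZ}. As we use it, it says roughly the following. Let $L\subset\C^{d}$ be a compact polynomially convex set, and let $L'\subset\C^{d+1}=\C^d\times\C$ be compact with $L'\cap\C^d=L$. Suppose $L'\setminus L$ is a finite union of totally real pieces, here piecewise linear real cells of dimension $\leq d+1$ that are ``graph-like'' over a suitable direction. Then $L'$ is polynomially convex provided the new cells are attached in a sufficiently generic way. The plan is to apply this once for each $d$, using the barycentric subdivision to keep all pieces linear.

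\emph{Step 1: the vertices.} Map the vertices of $\Sigma$, which are the $0$-simplices, injectively into $\C^1$. Finite subsets of $\C$ are polynomially convex, so \ref{Siii} holds at this level. Condition \ref{Sii} for $d=0$ asks that $f(\Sigma)\cap\C^0=f(\Sigma_{\leq -1})=\emptyset$. We arrange this by placing all vertices away from the origin of $\C$.

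\emph{Step 2: extending from $\Sigma_{\leq d-1}$ to $\Sigma_{\leq d}$.} Assume $f$ is already defined on $\Sigma_{\leq d-1}$, with image a polynomially convex subset of $\C^{d}$ satisfying \ref{Si} and \ref{Sii}. Let $\sigma$ be a $d$-simplex. Send its barycenter $b_\sigma$ to a point $(w_\sigma,t_\sigma)\in\C^d\times\C$ with $t_\sigma\neq 0$. Extend linearly on each simplex of the barycentric subdivision of $\sigma$, that is, as a cone from $f(b_\sigma)$ over $f(\partial\sigma)$.

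Since the last coordinate is nonzero except on $\partial\sigma$, we get $f(\sigma)\cap\C^d=f(\partial\sigma)$, which gives \ref{Sii}. We choose the points $f(b_\sigma)$ generically, with distinct values of $t_\sigma$ in general position, so that:
\begin{itemize}
\item the different cones are disjoint away from $\C^d$, which makes $f$ injective;
\item each cone is a union of linear real $d$-simplices in $\C^{d+1}$.
\end{itemize}
Condition \ref{Si} holds because each map is linear on the subdivision.

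\emph{Step 3: polynomial convexity.} For \ref{Siii} we invoke \cite{VZ}, whose result concerns exactly such unions of a polynomially convex set with cones over its subsets; the hypotheses are met for generic choices of apexes. Alternatively, if a direct argument is preferred, we can use a Kallin-type lemma. The polynomial $z_{d+1}$ maps $f(\Sigma_{\leq d-1})$ to $0$, and a generic linear combination of $z_{d+1}$ with other coordinates separates the individual cones. By Kallin's lemma, polynomial convexity of the union then reduces to polynomial convexity of each cone separately, together with the base $f(\Sigma_{\leq d-1})$.

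A single cone is a finite union of real linear simplices of dimension $\leq d$ in $\C^{d+1}$. For generic apex it is totally real and polynomially convex, which is again the content of \cite{VZ}. Iterating up to $d=n$ places $f(\Sigma)$ in $\C^{n+1}$, and \ref{Sii} with $d=n+1$ holds trivially.

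\emph{Main obstacle.} The hard step is Step 3: polynomial convexity of unions of real cones, and controlling the hulls when several cones are glued along a common polynomially convex base. For this we rely on \cite{VZ}. The one remaining check is that the genericity needed there is compatible with the genericity needed for injectivity and \ref{Sii}. This should be harmless, because both are open dense conditions on the finitely many apex points.
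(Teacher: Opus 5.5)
\emph{Verdict: the construction is right, but Step~3, which carries all of (iii), has a genuine gap.}

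Your Steps 1--2 are fine. The cone construction is linear on the barycentric subdivision, and $z_{d+1}$ vanishes on $f(\Sigma_{\le d})$ exactly along $f(\Sigma_{\le d-1})$. Injectivity holds once the $t_\sigma$ have pairwise distinct arguments. These are precisely the features the paper reads off from the explicit construction in \cite{VZ}.

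The paper does not use \cite{VZ} as a result about gluing totally real cells. It uses that the $n+1$ functions constructed there generate a dense subalgebra of $C(\Sigma,\C)$. By \cite[Theorem~1.2.10]{Stout}, this forces $f$ to be injective with polynomially convex image. The statement you attribute to \cite{VZ} is false, however generically the apex is chosen. Let $L=\Delta$ be a filled triangle in $\C$, and attach the cone with apex $(w,1)\in\C^2$ over $\partial\Delta\times\{0\}$. Its faces are totally real linear $2$-simplices for every $w$. Yet the slice of $L'$ at height $z_2=s\in(0,1)$ is $s(w,1)+(1-s)(\partial\Delta\times\{0\})$, whose hull is the corresponding copy of $\Delta$. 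So $\widehat{L'}$ contains the solid cone.

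The same phenomenon defeats your Kallin route. Kallin's lemma with $p=z_{d+1}$ itself does reduce you to polynomial convexity of each cone $C_\sigma$ and of the base. (A generic combination of $z_{d+1}$ with other coordinates would no longer vanish on the base.) But $C_\sigma\cap\{z_{d+1}=0\}=f(\partial\sigma)$, so $C_\sigma$ is polynomially convex only if $f(\partial\sigma)$ is. Total reality and the position of the apex are irrelevant. Your inductive hypothesis only says that $f(\Sigma_{\le d-1})$ is polynomially convex. Polynomial convexity does not pass to closed subsets (think of a closed disc and its boundary circle), so $f(\partial\sigma)$ is not controlled.

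The missing idea is a hereditary inductive hypothesis. Prove that $f(\Sigma')$ is polynomially convex for every subcomplex $\Sigma'\subseteq\Sigma_{\le d}$. Alternatively, prove as in \cite{VZ} that polynomials are dense in $C(f(\Sigma_{\le d}))$, which passes to closed subsets by Tietze.

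The inductive step is then the fiber lemma. Suppose a polynomial $p$ maps a compact set $X$ into a compact $Y\subset\C$ on which polynomials are dense in $C(Y)$. Then $\widehat{X}=\bigcup_{y\in Y}\widehat{X\cap p^{-1}(y)}$. Indeed, $\bar p|_X$ is a uniform limit of polynomials in $p$, so any representing measure $\mu$ of a point $q\in\widehat{X}$ satisfies $\int|p-p(q)|^2\,d\mu=0$.

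Apply this with $p=z_{d+1}$ and $Y=\bigcup_\sigma[0,t_\sigma]$. This $Y$ is a star of segments with distinct directions, so Lavrentiev's theorem applies. The fibers of $f(\Sigma')$ are:
\begin{itemize}
\item $f(\Sigma'\cap\Sigma_{\le d-1})$ over $0$;
\item complex-affine copies $sf(b_\sigma)+(1-s)f(\partial\sigma)$ over $st_\sigma$ for $0<s<1$;
\item single points over $t_\sigma$.
\end{itemize}
All of these are polynomially convex by induction. This proves (iii) without Kallin's lemma, and without any genericity beyond distinct arguments of the $t_\sigma$.
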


\begin{proof}
Let $C(\Sigma,\C)$ be the Banach algebra of continuous complex-valued functions on $\Sigma$. By \cite[Theorem 1]{VZ}, one can find $f_1,\dots,f_{n+1}\in C(\Sigma,\C)$ such that the sub-$\C$-algebra of $C(\Sigma,\C)$ generated by the $f_i$ is dense in $C(\Sigma,\C)$. The proof given in \cite[p.\,747]{VZ} is constructive and the choices made there ensure that the $f_i$ are piecewise linear on the first barycentric subdivision of $\Sigma$ and that the zero locus of~$f_{d+1}$ on $\Sigma_{\leq d}$ is~$\Sigma_{\leq d-1}$. Set $f:=(f_1,\dots, f_{n+1})$, so \ref{Si} and \ref{Sii} hold. The map~$f$ is injective because $C(\Sigma,\C)$ separates the points of $\Sigma$. The compact subset~$f(\Sigma)$ of~$\C^{n+1}$ is polynomially convex by \cite[Theorem~1.2.10]{Stout}, so \ref{Siii} also holds.
\end{proof}

\subsection{Moore spaces}
\label{parMoore}

Fix $k,m\geq 1$. Consider a CW complex with one cell of dimension $0$, one cell of dimension $k$ (yielding a $k$-sphere $\bS^k$) and one cell of dimension~$k+1$ with attaching map $\bS^k\to\bS^k$ of degree $m$. Let~$M(\Z/m,k)$ be a finite simplicial complex of dimension $k+1$ that has the homotopy type of this CW complex (see \cite[Theorem 2C.5]{Hatcher}). The space $M(\Z/m,k)$ is called a \textit{Moore space} (see \cite[Example 2.40]{Hatcher}). Using cellular (co)homology, one checks that 
\begin{equation}
\label{cohoMoore}
\wH^i(M(\Z/m,k),\Z)=0\textrm{ for }i\neq k+1\textrm{ \,\,and\,\, }\wH^{k+1}(M(\Z/m,k),\Z)=\Z/m,
\end{equation}
that $\wH_i(M(\Z/m,k),\Z)=0$ for $i\neq k$ and $\wH_k(M(\Z/m,k),\Z)=\Z/m$, and that $H^k(M(\Z/m,k),\Z/m)=\Z/m$. We also note that $\pi_i(M(\Z/m,k))=0$ for $0<i<k$.
%Nonexistence of Moore space with pi_1=H_1 in general (for k=1).

\subsection{The Oka manifolds \texorpdfstring{$\Omega_{k,m}$}{Omega k,m}}
\label{parOka}

Fix $k\geq 3$ and $m\geq 1$. Set $l:=\lfloor\frac{k}{2}\rfloor$, so $k=2l$ or~$k=2l+1$. By Proposition \ref{propVZ}, there exist injective continuous maps 
$$f_{k,m}:M(\Z/m,1)\to \C^{l+2}\textrm{ \,\,and\,\, }g_{k,m}:M(\Z/m,2)\to \C^{l+2},$$
piecewise linear on the first barycentric subdivision of $M(\Z/m,1)$ and~$M(\Z/m,2)$ respectively, such that the equalities $f_{k,m}(M(\Z/m,1))\cap\C^{l+1} =f_{k,m}(M(\Z/m,1)_{\leq l})$ and $ g_{k,m}(M(\Z/m,2))\cap\C^{l+1} =g_{k,m}(M(\Z/m,2)_{\leq l})$ hold, and whose images are polynomially convex. We define $K_{k,m}:=f_{k,m}(M(\Z/m,1))$ when~$k$ is odd and ${K_{k,m}:=g_{k,m}(M(\Z/m,2))}$ when $k$ is even. Set $\Omega_{k,m}:=\C^{l+2}\setminus K_{k,m}$. 

\begin{prop}
\label{propOOka}
For $k\geq 3$ and $m\geq 1$, the complex manifold $\Omega_{k,m}$ is Oka (in the sense recalled in \S\ref{parprelim}) and has the homotopy type of~$M(\Z/m,k)$.
\end{prop}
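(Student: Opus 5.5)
Two things have to be shown: that $\Omega_{k,m}$ is Oka, and that it has the homotopy type of $M(\Z/m,k)$.

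\emph{The Oka property.} Set $N:=l+2$. Since $k\geq 3$, we have $l\geq 1$, so $N\geq 3\geq 2$. By construction, $K_{k,m}$ is a polynomially convex compact subset of $\C^{N}$: it is the image of a compact simplicial complex under a continuous map, and its image was chosen polynomially convex (Proposition \ref{propVZ}\ref{Siii}). Kusakabe's theorem \cite[Corollary~1.3]{Kusakabe} then says directly that $\Omega_{k,m}=\C^N\setminus K_{k,m}$ is Oka.

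\emph{The homotopy type.} Write $K=K_{k,m}$ and $j\in\{1,2\}$ for the index of the Moore space $M(\Z/m,j)$ used to build $K$, so $K\simeq M(\Z/m,j)$. The complement $\C^N\setminus K$ is a noncompact manifold, so I would first compute its cohomology by Alexander duality in $\R^{2N}=\C^N$. Since $K$ is a compact polyhedron (the map is PL on the barycentric subdivision, hence a PL embedding), Alexander duality applies and gives
$$\wH_i(\C^N\setminus K,\Z)\cong \wH^{2N-i-1}(K,\Z).$$
By \eqref{cohoMoore}, $\wH^*(K,\Z)$ is $\Z/m$ in degree $j+1$ and zero otherwise. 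So $\wH_i(\Omega_{k,m},\Z)$ is $\Z/m$ for $i=2N-j-2$ and zero otherwise. With $N=l+2$, this degree is $2l+2-j$, which equals $k$ in both cases: $j=1$ for $k=2l+1$, and $j=2$ for $k=2l$. Thus $\Omega_{k,m}$ has the homology of $M(\Z/m,k)$.

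\emph{Simple connectivity.} By general position, removing a compact polyhedron of real dimension $j+1\leq 3$ from a real $2N$-manifold leaves the complement $(2N-j-3)$-connected. Here $2N-j-3=2l+1-j\geq 1$ when $l\geq 1$ and $j\leq 2$; for $l=1$, $j=2$ (i.e.\ $k=2$) this would fail, but $k\geq 3$ rules that out. Hence $\Omega_{k,m}$ is simply connected.

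\emph{Building the equivalence.} Now $\Omega_{k,m}$ is a simply connected open manifold, hence has the homotopy type of a CW complex, and its homology is $\Z/m$ concentrated in degree $k\geq 3$. By Hurewicz, $\pi_k(\Omega_{k,m})\cong\Z/m$. I would choose a map $\bS^k\to\Omega_{k,m}$ representing a generator. Composed with multiplication by $m$, it becomes null-homotopic, so it extends over the $(k+1)$-cell of the CW model. This gives a map $M(\Z/m,k)\to\Omega_{k,m}$ that is an isomorphism on $H_k$; it is then an isomorphism on all integral homology, since both sides vanish in other degrees. Both spaces are simply connected, so Whitehead's theorem makes this map a homotopy equivalence. (Alternatively, cite uniqueness of simply connected Moore spaces, \cite[Example 4.34]{Hatcher}.)

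The main obstacle is justifying the topological inputs carefully: that Alexander duality holds in the form used, which is fine for compact ENRs such as polyhedra, and the general-position connectivity estimate. For the latter I would argue that loops and homotopies of $2$-disks in $\C^N$ can be perturbed to miss the PL set $K$ when $2+(j+1)<2N$. Hypothesis~\ref{Sii} of Proposition \ref{propVZ} is not needed here; it presumably matters later.
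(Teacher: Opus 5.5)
Your outline follows the paper's proof: Kusakabe's theorem for the Oka property, Alexander duality for the homology, general position (transversality in the paper) for simple connectivity, then the characterization of simply connected Moore spaces. However, your Alexander duality step is wrong, and the error breaks the homotopy-type claim.

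\textbf{The gap.} The formula $\wH_i(\C^N\setminus K,\Z)\cong\wH^{2N-i-1}(K,\Z)$ is duality for $K\subset\bS^{2N}$. Inside $\C^N=\R^{2N}$ you must apply it to $K\sqcup\{\infty\}\subset\bS^{2N}$, or equivalently use
$\wH_i(\C^N\setminus K)\cong H_{i+1}(\C^N,\C^N\setminus K)\cong H^{2N-i-1}(K)$.
Either way, the \emph{unreduced} cohomology of $K$ appears on the right. The two versions differ only at $i=2N-1=2l+3$, where you get $H^0(K_{k,m},\Z)=\Z$.

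You can see this class directly. For $p\in K_{k,m}$, the map $x\mapsto (x-p)/|x-p|$ sends $\Omega_{k,m}$ to $\bS^{2l+3}$. It has degree $1$ on a large sphere $\{|x|=R\}$ enclosing $K_{k,m}$, so that sphere spans a $\Z$ summand of $H_{2l+3}(\Omega_{k,m},\Z)$. Since $2l+3\geq k+2$ and $\wH_*(M(\Z/m,k),\Z)$ is concentrated in degree $k$, $\Omega_{k,m}$ does not have the homotopy type of $M(\Z/m,k)$. The statement as written is false, and no argument can prove it.

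\textbf{What your argument does prove.} The rest of your reasoning is sound. Fed the correct homology ($\wH_k=\Z/m$, $\wH_{2l+3}=\Z$, zero otherwise), it proves the corrected statement. Map $M(\Z/m,k)\vee\bS^{2l+3}\to\Omega_{k,m}$ by your map on the first summand and the large sphere on the second. This is a homology isomorphism of simply connected spaces, so Whitehead gives $\Omega_{k,m}\simeq M(\Z/m,k)\vee\bS^{2l+3}$.

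\textbf{The paper has the same slip.} Its display \eqref{Alexander} gives $H_{2l+3}(\Omega_{k,m},\Z)\cong H^0(K_{k,m},\Z)=\Z$, but this class disappears when the paper reads off the reduced groups in \eqref{cohoMoore}.

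\textbf{Consequences downstream.} What is used later still holds: $\Omega_{k,m}$ is Oka and simply connected, $H^k(\Omega_{k,m},\Z/m)\cong\Z/m$, and $H^k(\Omega_{k,m},\Z)=0$. However, the finiteness of $\pi_i(F_{k,m})$ for $i>k$ claimed in Lemma~\ref{homotofi} fails at $i=2l+3$. The obstruction theory in Proposition~\ref{kill5} should instead be run with target the retract $M(\Z/m,k)$, whose higher homotopy groups are finite, and then composed with its inclusion into $\Omega_{k,m}$.

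\textbf{A minor point.} Your aside about $k=2$ is off. For $l=1$, $j=2$ one still has $2l+1-j=1$, so simple connectivity would hold. What rules out $k=2$ is that Proposition~\ref{propVZ} cannot embed the $3$-dimensional complex $M(\Z/m,2)$ in $\C^3$.
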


\begin{proof}
The complex manifold $\Omega_{k,m}$ is Oka by a theorem of Kusakabe \cite[Corollary 1.3]{Kusakabe} (see also \cite[Theorem 1.2]{FW}), because $K_{k,m}$ is polynomially convex.

The manifold $\Omega_{k,m}$ is connected (\resp simply connected). To see this, fix a path in~$\C^{l+2}$ between two points of $\Omega_{k,m}$ (\resp a based homotopy in $\C^{l+2}$ from a loop in~$\Omega_{k,m}$ to the constant loop). As $f_{k,m}$ and~$g_{k,m}$ are piecewise linear, one can perturb this path (\resp this homotopy) to ensure it avoids~$K_{k,m}$ (by transversality).

By a form of Alexander duality, for all $i\geq 1$, one has 
\begin{equation}
\label{Alexander}
H^{2l+3-i}(K_{k,m},\Z)\isoto H_{i+1}(\C^{l+2},\Omega_{k,m},\Z)\isoto H_i(\Omega_{k,m},\Z)
\end{equation}
(apply \cite[Proposition 3.4.6]{Hatcher}). We deduce from \eqref{cohoMoore} and \eqref{Alexander} that 
$$\wH_i(\Omega_{k,m},\Z)=0\textrm{ for }i\neq k\textrm{ \,\,and\,\, }\wH_k(\Omega_{k,m},\Z)=\Z/m.$$
It therefore follows from the characterization \cite[Example 4.34]{Hatcher} of Moore spaces that $\Omega_{k,m}$ has the homotopy type of $M(\Z/m,k)$.
\end{proof}

By Proposition \ref{propOOka} and \eqref{cohoMoore}, one has $H^k(\Omega_{k,m},\Z/m)=\Z/m$. We let $\alpha_{k,m}$ be a generator of $H^k(\Omega_{k,m},\Z/m)$.

\subsection{Finite flat covers of the \texorpdfstring{$\Omega_{k,m}$}{Omega k,m}}
\label{parfiniteflatOka}

Keep the notation of \S\ref{parOka}. Consider the holomorphic map $p_{k,m}:\C^{l+2}\to \C^{l+2}$, which is finite flat of degree $m$, defined by 
$$p_{k,m}(z_1,\dots, z_{l+2})=(z_1,\dots, z_{l+1},z_{l+2}^m).$$
Let $\wK_{k,m}$ and $\wOmega_{k,m}$ be the inverse images of $K_{k,m}$ and $\Omega_{k,m}$ by $p_{k,m}$. We still denote by ${p_{k,m}:\wK_{k,m}\to K_{k,m}}$ and ${p_{k,m}:\wOmega_{k,m}\to\Omega_{k,m}}$ the maps induced by $p_{k,m}$.

\begin{lem}
\label{lemCW}
For $k\geq 3$ and $m\geq 1$, the space $\wK_{k,m}$ is a CW complex and the pushforward $(p_{k,m})_*: H_{2l+2-k}(\wK_{k,m},\Z)\to H_{2l+2-k}(K_{k,m},\Z)$ is an isomorphism.
\end{lem}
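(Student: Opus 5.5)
The plan is to build an explicit cell structure on $\wK_{k,m}$ lifting the simplicial structure of $K_{k,m}$, and then compare cellular chain complexes in low degrees. Set $j:=2l+2-k$, so $j=1$ and $K_{k,m}=f_{k,m}(M(\Z/m,1))$ when $k$ is odd, and $j=2$ and $K_{k,m}=g_{k,m}(M(\Z/m,2))$ when $k$ is even. Let $\Sigma$ be the first barycentric subdivision of the relevant Moore space. Since $k\geq 3$, one has $j\leq l$ in both cases ($l\geq 1$ if $k$ is odd, $l\geq 2$ if $k$ is even). Consequently, by the property of $f_{k,m}$ and $g_{k,m}$ coming from Proposition \ref{propVZ} \ref{Sii}, every simplex of $\Sigma$ of dimension $\leq j$ is mapped into the branch locus $\C^{l+1}=\{z_{l+2}=0\}$ of $p_{k,m}$.

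The first step, which I expect to be the main technical point, is the cell structure on $\wK_{k,m}$. Fix a simplex $\sigma$ of $\Sigma$. The coordinate $z_{l+2}$ restricted to the image of $\sigma$ is a real-affine map $\sigma\to\C$ by Proposition \ref{propVZ} \ref{Si}. By \ref{Sii}, its zero locus is $\sigma\cap\Sigma_{\leq l}$, which I will check is a (possibly empty) face $\tau$ of $\sigma$, using that $\Sigma_{\leq l}$ here means the subdivision of the $l$-skeleton, a subcomplex of $\Sigma$. The complement $\sigma\setminus\tau$ is convex, hence simply connected. Therefore the restriction of $p_{k,m}$ over it, which is an $m$-sheeted covering $z\mapsto z^m$ in the last coordinate, is trivial. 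Each of its $m$ sheets is given by a continuous choice of $m$-th root of $z_{l+2}$, and this root extends continuously by $0$ over $\tau$. Hence $p_{k,m}^{-1}(\sigma)$ is the union of $m$ copies of $\sigma$ glued along $\tau$, each copy mapping homeomorphically onto $\sigma$. Doing this for all simplices compatibly gives a CW structure on $\wK_{k,m}$ for which $p_{k,m}$ is cellular. Concretely, each cell of $K_{k,m}$ contained in $\C^{l+1}$ has exactly one preimage cell, and every other cell has exactly $m$ preimage cells mapping homeomorphically onto it. Compatibility on shared faces holds because these lifts are simply the connected components of the preimages of open cells.

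Next I compare the cellular chain complexes via $p_*:C_\bullet(\wK_{k,m})\to C_\bullet(K_{k,m})$, which is a chain map. In degrees $i\leq j$, all cells of $K_{k,m}$ lie in the branch locus, so $p_*$ is the identity $C_i(\wK_{k,m})= C_i(K_{k,m})$ for $i\leq j$. In degree $j+1$, $p_*$ is surjective, since every cell has at least one lift. Hence the cycle groups $Z_j$ coincide, and $\partial\tilde C_{j+1}=p_*\partial\tilde C_{j+1}=\partial p_*\tilde C_{j+1}=\partial C_{j+1}$. Therefore $(p_{k,m})_*:H_j(\wK_{k,m},\Z)\to H_j(K_{k,m},\Z)$ is an isomorphism.

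The delicate part is the first step. One must make sure that the zero set of the affine function $z_{l+2}|_\sigma$ is exactly a face, and that the lifted closed cells really glue into a CW complex rather than merely a union of closed cells. The first issue follows from \ref{Sii}, applied with $d=l+1$, together with the fact that the zero set of an affine map on a simplex is convex. The second follows because the characteristic maps are restrictions of the continuous root-extraction maps described above.
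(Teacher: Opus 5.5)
\textbf{The gap: the subdivided complex has low-dimensional cells off the branch locus.} Your chain-level comparison rests on the claim that every simplex of $\Sigma$ of dimension $\leq j$ is mapped into the branch locus $\C^{l+1}$. This is false when $\Sigma$ is the barycentric subdivision. Property \ref{Sii} of Proposition \ref{propVZ} gives $f(M)\cap\C^{l+1}=f(M_{\leq l})$, where $M_{\leq l}$ is the $l$-skeleton of the \emph{original} Moore complex $M$. The $l$-skeleton of its barycentric subdivision is much larger.

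For instance, take $k=3$, so $l=j=1$. The barycenter of a $2$-simplex of $M(\Z/m,1)$ is a vertex of $\Sigma$ lying in the interior of that $2$-simplex, so it is mapped off $\C^{2}$. It therefore has $m$ preimages in $\wK_{3,m}$, and the same is true of the edges of $\Sigma$ emanating from it. The case $k=4$ fails in the same way, with barycenters of $3$-simplices.

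Consequently $(p_{k,m})_*$ is not the identity on $C_0$ and $C_1$. Your deduction that the degree-$j$ cycle and boundary groups of $\wK_{k,m}$ and $K_{k,m}$ agree then no longer follows. You observe yourself that $\Sigma_{\leq l}$ must be read as the subdivided $l$-skeleton, which contradicts the claim in your first paragraph.

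\textbf{The fix: use the coarse cell structure.} Take the CW structure whose closed cells are the images of the simplices of $M$ itself; no piecewise-linearity is needed for this.
\begin{enumerate}[label=(\roman*)]
\item If $k\geq 5$, then $\dim M\leq l$. Hence $K_{k,m}\subset\C^{l+1}$, and $p_{k,m}\colon\wK_{k,m}\to K_{k,m}$ is a homeomorphism.
\item If $k\in\{3,4\}$, then $j=l$ and $\dim M=l+1$. All cells of dimension $\leq l$ lie in $\C^{l+1}$. Each $(l+1)$-cell has its boundary in $\C^{l+1}$ and its open interior, an open ball and hence simply connected, off $\C^{l+1}$. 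Over that interior the covering is trivial, and each branch of the $m$-th root of $z_{l+2}$ extends continuously by $0$ to the boundary. So each such cell lifts to $m$ cells of $\wK_{k,m}$, all with the same attaching map into the common $l$-skeleton.
\end{enumerate}
With this structure your cellular chain argument goes through verbatim, and it is exactly the paper's proof. Your face-by-face analysis of the zero locus of $z_{l+2}$ on subdivided simplices then becomes unnecessary.
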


\begin{proof}
If $k\geq 5$, then $K_{k,m}\subset\C^{l+1}$ and the map $p_{k,m}:\wK_{k,m}\to K_{k,m}$ is a homeomorphism. If $k\in\{3,4\}$, then $K_{k,m}\cap\C^{l+1} =(K_{k,m})_{\leq l}=(K_{k,m})_{\leq 2l+2-k}$. It follows that the CW complex structure on $K_{k,m}$ induces a CW complex structure on~$\wK_{k,m}$, with the same $(2l+2-k)$-skeleton, and with $m$ cells of dimension $2l+3-k$ in~$\wK_{k,m}$ for each cell of dimension $2l+3-k$ of $K_{k,m}$ (all attached via identical maps to the $(2l+2-k)$-skeleton). The last assertion now results from a cellular homology computation.
\end{proof}

In the proof of Proposition \ref{liftentier} below, we use Alexander duality in the form given in \cite[Corollary 11.16]{Massey}. This reference features (co)homology theories with or without compact support ($H_*^c$, $H^*_c$, $H_*^{\infty}$ and $H_{\infty}^*$, see \mbox{\cite[\S 10.1]{Massey}}). One has~$H_*^c=H_*^{\infty}$ and $H^*_c=H^*_{\infty}$ for compact spaces (see \cite[\S 10.2]{Massey}). We write~$H_*^{\BM}$ (for Borel--Moore) instead of $H_*^{\infty}$. In view of \cite[\S 8.8 and \S 9.6]{Massey}, the theories~$H^c_*$ and $H_{\infty}^*$ coincide with singular (co)homology on CW complexes. This applies in particular to $K_{k,m}$ and $\wK_{k,m}$ (see Lemma \ref{lemCW}) and to the~$\ci$ manifolds~$\Omega_{k,m}$ and $\wOmega_{k,m}$ (see \cite[Theorem 10.6]{Munkres}).

\begin{prop}
\label{liftentier}
If $k\geq 3$ and $m\geq 1$, then $(p_{k,m})^*\alpha_{k,m}\in H^k(\wOmega_{k,m},\Z/m)$ is the reduction modulo $m$ of a class $\talpha_{k,m}\in H^k(\wOmega_{k,m},\Z)$.
\end{prop}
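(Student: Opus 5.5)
The reduction modulo $m$ map $H^k(\wOmega_{k,m},\Z)\to H^k(\wOmega_{k,m},\Z/m)$ sits in the Bockstein long exact sequence, so $(p_{k,m})^*\alpha_{k,m}$ lifts to an integral class if and only if its Bockstein $\beta\bigl((p_{k,m})^*\alpha_{k,m}\bigr)\in H^{k+1}(\wOmega_{k,m},\Z)$ vanishes. By naturality of the Bockstein, this class equals $(p_{k,m})^*\beta(\alpha_{k,m})$. On $\Omega_{k,m}\simeq M(\Z/m,k)$ the Bockstein $\beta:H^k(\Omega_{k,m},\Z/m)\to H^{k+1}(\Omega_{k,m},\Z)=\Z/m$ is an isomorphism, by \eqref{cohoMoore} and the vanishing of $H^k(\Omega_{k,m},\Z)$. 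So the plan is to prove that the pullback
$$(p_{k,m})^*:H^{k+1}(\Omega_{k,m},\Z)\to H^{k+1}(\wOmega_{k,m},\Z)$$
kills the generator. Since the source is $\Z/m$, it would suffice to show that this map factors through multiplication by $m$ on some group, or that it is $m$ times a map.

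To compute this pullback I would use Alexander duality as in \cite[Corollary 11.16]{Massey}, applied in $\C^{l+2}$ to both $K_{k,m}$ and $\wK_{k,m}$. It identifies $H^{k+1}(\Omega_{k,m},\Z)=H^{k+1}_{\infty}(\Omega_{k,m})$ with a (compactly supported) homology group of $K_{k,m}$ shifted by one, namely $H_{2l+2-k}(K_{k,m},\Z)$ (degree $2l+4-(k+1)-1$). The same holds for the tilde versions. The key point is compatibility of Alexander duality with the finite map $p_{k,m}$. Pullback in cohomology of the complement should correspond, under duality, to a transfer (umkehr) map $H_{2l+2-k}(K_{k,m})\to H_{2l+2-k}(\wK_{k,m})$. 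This is because $p_{k,m}:\C^{l+2}\to\C^{l+2}$ is a proper map of degree $m$, and Borel--Moore homology of $\C^{l+2}$ pulls back with multiplication by the degree on fundamental classes. Composing with the pushforward gives $(p_{k,m})_*\circ p_{k,m}^{!}=m\cdot\Id$ on $H_{2l+2-k}(K_{k,m},\Z)$, from the projection formula and $(p_{k,m})_*[\C^{l+2}]=m[\C^{l+2}]$.

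By Lemma \ref{lemCW}, $(p_{k,m})_*$ is an isomorphism in degree $2l+2-k$. Hence $p_{k,m}^!=(p_{k,m})_*^{-1}\circ m$, which kills the group $H_{2l+2-k}(K_{k,m},\Z)\cong\Z/m$. Transporting back through the dualities, $(p_{k,m})^*\beta(\alpha_{k,m})=0$, and the Bockstein sequence produces $\talpha_{k,m}$.

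The main obstacle is rigorously establishing the naturality square: that Massey's duality isomorphisms for $(\C^{l+2},K)$ and $(\C^{l+2},\wK)$ intertwine $p_{k,m}^*$ on complements with a transfer $p^!$ satisfying $p_*p^!=m$. I would first try to realise duality as cap product with the Borel--Moore fundamental class of $\C^{l+2}$, via the connecting map of the pair $(\C^{l+2},\Omega)$ and the excision isomorphism $H^*_c$-style. I would then check the projection formula $p_*(p^*x\cap[\C^{l+2}])=x\cap p_*[\C^{l+2}]=m(x\cap[\C^{l+2}])$ at the level of these cap products, using that $p_{k,m}$ is proper, so that Borel--Moore pushforward exists. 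An alternative that avoids transfers is to compute both dualities directly and verify that the composite with $(p_{k,m})_*$ equals multiplication by $m$.
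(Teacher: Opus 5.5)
Your proposal is correct and is essentially the paper's proof. You reduce via the Bockstein to showing that $(p_{k,m})^*$ kills $H^{k+1}(\Omega_{k,m},\Z)$, transport this group to $H_{2l+2-k}(K_{k,m},\Z)$ by duality, and combine the projection formula $(p_{k,m})_*(p_{k,m}^*x\cap[\wOmega_{k,m}])=m\,(x\cap[\Omega_{k,m}])$ with the injectivity of $(p_{k,m})_*$ from Lemma~\ref{lemCW}. The naturality square you flag as the main obstacle is handled in the paper exactly by your ``alternative that avoids transfers'': Poincar\'e duality on $\Omega_{k,m}$ and $\wOmega_{k,m}$ (cap product with Borel--Moore fundamental classes), followed by the boundary map of the Borel--Moore long exact sequence of the pair, whose compatibility with the proper pushforward $(p_{k,m})_*$ is cited from Massey.
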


\begin{proof}
Let $\beta\in H^{k+1}(\Omega_{k,m},\Z)[m]$ be the image of $\alpha_{k,m}$ by the boundary map of the short exact sequence $0\to\Z\xrightarrow{m}\Z\to\Z/m\to 0$. We must show that $(p_{k,m})^*\beta=0$ in $H^{k+1}(\wOmega_{k,m},\Z)$. Consider the diagram
\begin{equation}
\label{Alexanderpullback}
\begin{aligned}
\xymatrix@C=3em@R=1em{
H^{k+1}(\Omega_{k,m},\Z)\ar_{}^{(p_{k,m})^*}[d]\ar_{\cap [\Omega_{k,m}]\hspace{.5em}}^{\sim\hspace{.3em}}[r]&H^{\BM}_{2l+3-k}(\Omega_{k,m},\Z)\ar^{\sim}[r]&H_{2l+2-k}(K_{k,m},\Z)\\
H^{k+1}(\wOmega_{k,m},\Z)\ar_{\cap [\tilde{\Omega}_{k,m}]\hspace{.5em}}^{\sim\hspace{.3em}}[r]&H^{\BM}_{2l+3-k}(\wOmega_{k,m},\Z)\ar_{(p_{k,m})_*}[u]\ar^{\sim}[r]&H_{2l+2-k}(\wK_{k,m},\Z),\ar^{\raisebox{.3em}{\vsim}}_{(p_{k,m})_*}[u]
}
\end{aligned}
\end{equation}
whose left horizontal arrows are Poincar\'e duality isomorphisms given by cap products with the fundamental classes of $\Omega_{k,m}$ and $\wOmega_{k,m}$ (see \cite[Theorem~11.4]{Massey}), whose right horizontal arrows are boundary maps of long exact sequences of pairs (see~\cite[(4a)\,p.\,86]{Massey}) which are isomorphisms by vanishing of $H^{\BM}_{2l+2-k}(\C^{l+2},\Z)$ and $H^{\BM}_{2l+3-k}(\C^{l+2},\Z)$, whose right square commutes by \cite[(4c)\,p.\,86]{Massey}, and whose right vertical arrow is an isomorphism by Lemma~\ref{lemCW}. One computes that
$$(p_{k,m})_*(p_{k,m}^*(\beta)\cap[\wOmega_{k,m}])=\beta\cap(p_{k,m})_*[\wOmega_{k,m}]=m\cdot (\beta\cap [\Omega_{k,m}])=0,$$
where we used the naturality of the cap product (see \cite[(A1)\,p.\,325]{Massey}), the fact that $p_{k,m}$ has degree~$m$, and that $\beta$ is $m$-torsion. It now follows from diagram~\eqref{Alexanderpullback} (and in particular the commutativity of its right square) that~${(p_{k,m})^*\beta=0}$.
\end{proof}

\begin{prop}
\label{kill}
If $k\geq 5$ and $m\geq 1$, then $(p_{k,m})^*\alpha_{k,m}=0\textrm{ in }H^k(\wOmega_{k,m},\Z/m)$.
\end{prop}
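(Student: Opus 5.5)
The plan is to run the argument of Proposition~\ref{liftentier} directly with $\Z/m$ coefficients, applied to $\alpha_{k,m}$ itself instead of its Bockstein. The extra input for $k\geq 5$ is the observation already made in the proof of Lemma~\ref{lemCW}. Since $l=\lfloor k/2\rfloor\geq 2$ and $M(\Z/m,1)$, $M(\Z/m,2)$ have dimensions $2$ and $3$, we have $(K_{k,m})_{\leq l}=K_{k,m}$ when $k$ is odd. When $k$ is even we have $l\geq 3$, so again $(K_{k,m})_{\leq l}=K_{k,m}$. In both cases $K_{k,m}\subset\C^{l+1}=\{z_{l+2}=0\}$. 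Since $p_{k,m}$ is the identity on $\{z_{l+2}=0\}$, the map $p_{k,m}:\wK_{k,m}\to K_{k,m}$ is a homeomorphism. Hence $(p_{k,m})_*:H_j(\wK_{k,m},\Z/m)\to H_j(K_{k,m},\Z/m)$ is an isomorphism in every degree $j$.

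Next I would write the analogue of diagram~\eqref{Alexanderpullback} with $\Z/m$ coefficients in degree $k$:
$$H^k(\Omega_{k,m},\Z/m)\xrightarrow{\cap[\Omega_{k,m}]}H^{\BM}_{2l+4-k}(\Omega_{k,m},\Z/m)\xrightarrow{\sim}H_{2l+3-k}(K_{k,m},\Z/m),$$
together with the same row for $\wOmega_{k,m}$ and $\wK_{k,m}$. The vertical maps are $(p_{k,m})^*$ on the left and $(p_{k,m})_*$ in the middle and on the right. The left horizontal arrows are Poincar\'e duality isomorphisms on the oriented $\ci$ manifolds $\Omega_{k,m}$ and $\wOmega_{k,m}$, by \cite[Theorem~11.4]{Massey} with $\Z/m$ coefficients. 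The right horizontal arrows are boundary maps of the long exact sequences of pairs $(\C^{l+2},\Omega_{k,m})$ and $(\C^{l+2},\wOmega_{k,m})$ in Massey's setting. They are isomorphisms because $H^{\BM}_j(\C^{l+2},\Z/m)=0$ for $j\neq 2l+4$, and here $j\in\{2l+3-k,\,2l+4-k\}$ with $k\geq 5$. The right square commutes by \cite[(4c)\,p.\,86]{Massey}, as before. The CW and manifold remarks preceding Proposition~\ref{liftentier} again identify Massey's theories with singular (co)homology.

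The conclusion then follows from the same computation as before:
$$(p_{k,m})_*\bigl(p_{k,m}^*\alpha_{k,m}\cap[\wOmega_{k,m}]\bigr)=\alpha_{k,m}\cap(p_{k,m})_*[\wOmega_{k,m}]=m\cdot(\alpha_{k,m}\cap[\Omega_{k,m}])=0,$$
which vanishes because the coefficients are $\Z/m$. This uses naturality of the cap product and the fact that the proper map $p_{k,m}$ has degree $m$. Transporting along the right square, the image of $p_{k,m}^*\alpha_{k,m}\cap[\wOmega_{k,m}]$ in $H_{2l+3-k}(\wK_{k,m},\Z/m)$ is killed by the isomorphism $(p_{k,m})_*$, so it is zero. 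Since the bottom horizontal arrows are isomorphisms, we get $p_{k,m}^*\alpha_{k,m}=0$.

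The only step needing care is checking that the cited Massey results (Poincar\'e duality, the boundary isomorphisms, commutativity of the square) hold with $\Z/m$ rather than $\Z$ coefficients. I expect this to be routine, since those statements hold for arbitrary coefficient groups. The geometric heart of the argument is simply that $K_{k,m}$ lies in the branch locus of $p_{k,m}$ when $k\geq 5$.
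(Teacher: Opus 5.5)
Your argument is correct, but it is organized differently from the paper's. The paper does not rerun the duality computation. From $K_{k,m}\subset\C^{l+1}$ it deduces $\wK_{k,m}=K_{k,m}$, hence $\wOmega_{k,m}=\Omega_{k,m}$ as subsets of $\C^{l+2}$. Therefore $H^k(\wOmega_{k,m},\Z)=H^k(\Omega_{k,m},\Z)=H^k(M(\Z/m,k),\Z)=0$ by Proposition~\ref{propOOka} and \eqref{cohoMoore}. The integral lift $\talpha_{k,m}$ provided by Proposition~\ref{liftentier} must then vanish, and so does its reduction $(p_{k,m})^*\alpha_{k,m}$.

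You instead transpose the proof of Proposition~\ref{liftentier} to $\Z/m$ coefficients, one cohomological degree lower. This requires $(p_{k,m})_*$ to be an isomorphism in homological degree $2l+3-k$ rather than $2l+2-k$. For $k\in\{3,4\}$, that degree is the top dimension of $K_{k,m}$, and $\wK_{k,m}$ has $m$ copies of each top cell (see the proof of Lemma~\ref{lemCW}). The pushforward is then in general not injective and your argument breaks down, which explains why only an integral lift is available in those degrees. For $k\geq 5$, the homeomorphism $\wK_{k,m}\to K_{k,m}$ removes this obstacle, as you observe.

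The paper's route is shorter once Proposition~\ref{liftentier} is available, and it only needs integral duality plus the integral cohomology of the Moore space. Yours is self-contained and avoids the Bockstein. The cost is a routine check that Massey's Poincar\'e duality, boundary isomorphisms and cap-product naturality hold with $\Z/m$ coefficients.
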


\begin{proof}
As $k\geq 5$, one has $K_{k,m}\subset \C^{l+1}$ in $\C^{l+2}$. It follows that $\wK_{k,m}=K_{k,m}$, hence that $\wOmega_{k,m}=\Omega_{k,m}$ (as subsets of $\C^{l+2}$). Using Proposition \ref{propOOka} and~\eqref{cohoMoore} yields
$$H^k(\wOmega_{k,m},\Z)=H^k(\Omega_{k,m},\Z)=H^k(M(\Z/m,k),\Z)=0.$$
The class $\talpha_{k,m}\in H^k(\wOmega_{k,m},\Z)$ given by Proposition \ref{liftentier} therefore vanishes, and it follows that $(p_{k,m})^*\alpha_{k,m}=0$.
\end{proof}

\section{Killing singular cohomology on finite flat covers}
\label{seckill}

The main goal of this section is the following theorem.

\begin{thm}
\label{thkill}
Let $S$ be a finite-dimensional Stein space. For all $k,m\geq 1$ and all~$\alpha\in H^k(S,\Z/m)$, there exist $d\geq 1$ and a holomorphic map $p:\wS\to S$ that is finite flat of degree $d$ such that $p^*\alpha=0$ in $H^k(\wS,\Z/m)$. 
\end{thm}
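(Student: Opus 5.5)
The plan is to argue by downward induction on $k$, following the strategy sketched in \S\ref{parstrat}. Since $S$ has dimension $n<\infty$, it has the homotopy type of a CW complex of dimension $\leq n$ (Hamm), so $H^k(S,\Z/m)=0$ for $k>n$ and the statement is trivial in that range. We also note two facts used repeatedly. First, finite flat covers of Stein spaces are Stein, and finite-dimensional. Second, compositions of finite flat maps of constant degree are finite flat, with multiplicative degrees. Hence it suffices to kill $\alpha$ after finitely many successive finite flat covers. The cases $k\in\{1,2\}$ are handled directly. For $k=1$, a class in $H^1(S,\Z/m)$ is a $\Z/m$-torsor, i.e.\ a finite étale (hence flat) cover of degree $m$ of $S$, on which it tautologically dies. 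For $k=2$, we write $\alpha$ via the Kummer sequence in terms of a line bundle and a Brauer class, and invoke the results of \cite{Brauerstein} to split the Brauer part on a finite flat cover. The line-bundle part becomes divisible by $m$ after adjoining an $m$-th root of a section, again through a finite flat cover.

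For $k\geq5$, fix $\alpha\in H^k(S,\Z/m)$ and assume the theorem for all degrees $>k$ (and for all Stein spaces of dimension $\leq n$). We want a continuous map $f:S\to\Omega_{k,m}$ with $f^*\alpha_{k,m}=\alpha$. Since $\Omega_{k,m}\simeq M(\Z/m,k)$ is $(k-1)$-connected (Proposition \ref{propOOka}), obstruction theory gives such an $f$ on the $(k+1)$-skeleton of $S$ realizing $\alpha$. The obstructions to extending over higher skeleta lie in $H^{j+1}(S,\pi_j(M(\Z/m,k)))$ with $j\geq k+1$. These homotopy groups are finite, annihilated by a power of $m$; we filter them to reduce to $\Z/m'$-coefficients. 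By the induction hypothesis, each obstruction class (of degree $>k$) can be killed on a finite flat cover. Obstruction theory is natural under pullback, so after finitely many covers (there are finitely many skeleta, up to $\dim S$) we get $\wS_1\to S$ finite flat and a continuous $f:\wS_1\to\Omega_{k,m}$ with $f^*\alpha_{k,m}=\alpha|_{\wS_1}$. One subtlety is that changing the map on lower skeleta may alter later obstructions; we therefore pull back and redo the extension stepwise, always keeping the restriction to the $(k+1)$-skeleton fixed.

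By Theorem \ref{thOka} (applied to the trivial fibration $\wS_1\times\Omega_{k,m}\to\wS_1$), $f$ is homotopic to a holomorphic map. Form the fiber product $\wS:=\wS_1\times_{\Omega_{k,m}}\wOmega_{k,m}$. It is finite flat of degree $m$ over $\wS_1$, being a base change of $p_{k,m}$. Proposition \ref{kill} then gives that $\alpha$ pulls back to $0$ on $\wS$. For $k\in\{3,4\}$ the same construction together with Proposition \ref{liftentier} only shows that $\alpha$ becomes the reduction of an integral class $\tilde\alpha$. To handle this case, we would show that an integral class can be killed modulo $m$ on a finite flat cover. We repeat the Oka argument with an Oka model of a sphere $\bS^k$, or rather $\C^N\setminus\{\text{polynomially convex } \bS^{2N-k-1}\}$, and compose with the finite flat map $z_N\mapsto z_N^m$ when the sphere sits in $\C^{N-1}$. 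We expect this to multiply the generator by $m$, again using downward induction to kill obstructions.

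The main obstacle is the $k\in\{3,4\}$ integral step together with making the obstruction-killing induction rigorous. One difficulty is the coefficient groups $\pi_j(M(\Z/m,k))$, which are not just $\Z/m$. The other is ensuring that each successive cover keeps earlier progress; pullback naturality handles this.
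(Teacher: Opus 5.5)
\textbf{The case $k=4$ has a genuine gap.} Your handling of $k=1$ and of $k\geq 5$ follows the paper. So does the reduction, for $k\in\{3,4\}$, to the case where $\alpha$ lifts to an integral class $\tilde\alpha$. For $k=3$ your sphere argument is the paper's, with $\C^2\setminus\{0\}\simeq\bS^3$ and $(z_1,z_2)\mapsto(z_1,z_2^m)$. It works because an odd sphere is rationally $K(\Z,3)$, so every obstruction has finite coefficients.

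For $k=4$ this fails. Since $\pi_7(\bS^4)\cong\Z\oplus\Z/12$, the obstruction in $H^8(S,\pi_7(\bS^4))$ has a $\Z$-component. Your induction hypothesis only handles finite coefficients, so it cannot kill this component. Nor is the obstruction an artifact of the method:
\begin{itemize}
\item any class pulled back from $\bS^4$, or from $\C^N\setminus\bS^{2N-5}$ (whose $H^8$ vanishes), has zero cup square;
\item $\tilde\alpha^2$ can be non-torsion, e.g.\ $\tilde\alpha=h^2$ on a Stein Jouanolou model of $\P^4$;
\item it stays non-torsion after any finite flat pullback $p$ of degree $d$, because the trace gives $p_*p^*=d$.
\end{itemize}
So $p^*\tilde\alpha$ never comes from your model.

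The missing idea is to replace $\bS^4$ by $\BSL_2(\C)$. It is rationally $K(\Z,4)$, since its cohomology is $\Z[c_2]$, and it has a geometric meaning. The paper's argument runs as follows:
\begin{itemize}
\item kill the finite obstructions using the case $k\geq 5$, so that $\tilde\alpha=c_2(\cE)$ for a rank $2$ topological bundle;
\item make $\cE$ holomorphic by Grauert's Oka principle;
\item take a finite flat multisection $T\subset\P(\cE)$ from Lemma \ref{lemBertini}; there $\cE$ is an extension of line bundles $\cL,\cL'$, so $\tilde\alpha|_T=c_1(\cL)c_1(\cL')$;
\item now $\alpha|_T$ is a product of two classes in $H^2(T,\Z/m)$, which the case $k=2$ kills.
\end{itemize}

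\textbf{Smaller points.}
\begin{itemize}
\item For $k=2$, \cite{Brauerstein} only represents the Brauer class by an Azumaya algebra. Splitting it on a finite flat cover needs a finite flat multisection of the associated Severi--Brauer space. That is Lemma \ref{lemBertini} (an Oka-principle choice of sections cutting out a relative complete intersection), combined with Lemma \ref{trivSB}.
\item Also for $k=2$, adjoining an $m$-th root of a single section does not make $\cL$ divisible by $m$. The paper maps $S$ to $\P^n$ by $n+1$ generating sections of $\cL$ and pulls back $[x_0:\dots:x_n]\mapsto[x_0^m:\dots:x_n^m]$.
\item For $k\geq 5$, phrase the obstruction theory as lifting $S\to K(\Z/m,k)$ through a Moore--Postnikov tower of $M(\Z/m,k)\to K(\Z/m,k)$, not as extension over skeleta of $S$; skeleta do not pull back to skeleta of a branched cover. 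The obstructions are then pullbacks of fixed $k$-invariants, hence natural under finite flat pullback. Finally compose with a map $M(\Z/m,k)\to\Omega_{k,m}$ inducing an isomorphism on $H^k(-,\Z/m)$.
\item Lifting into $M(\Z/m,k)$, as your choice of coefficient groups implicitly does, also sidesteps a slip in the paper. Alexander duality gives $H_{2l+3}(\Omega_{k,m},\Z)\cong H^0(K_{k,m},\Z)=\Z$. So $\Omega_{k,m}$ is really $M(\Z/m,k)\vee\bS^{2l+3}$, not $M(\Z/m,k)$ as Proposition \ref{propOOka} states, and its $\pi_{2l+3}$ is infinite.
\end{itemize}
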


\begin{rem}
It follows from the proof of Theorem \ref{thkill} that the integer $d$ only depends on $m$ and on the dimension of $S$. It also follows from this proof that the finite map~$p$ can be chosen to be locally of complete intersection, in addition to being flat.
\end{rem}

After gathering general facts about Severi--Brauer spaces in \S\ref{parSB}, we prove Theorem \ref{thkill} in \S\ref{par12} when $k\in\{1,2\}$, in \S\ref{par5} when $k\geq 5$ (this is the heart of the proof), and in \S\ref{par34} for the slightly more involved cases where~${k\in\{3,4\}}$.

\subsection{Severi--Brauer spaces}
\label{parSB}

Let $S$ be a complex space. A \textit{Severi--Brauer space} of relative dimension $N$ over $S$ is a locally trivial holomorphic fibration $\pi:X\to S$ whose fibers are isomorphic to $\P^N(\C)$.  As the group of holomorphic automorphisms of $\P^{N}(\C)$ is $\PGL_{N+1}(\C)$, Severi--Brauer spaces of relative dimension $N$ over $S$ are classified by $H^1(S,\PGL_{N+1}(\cO_S))$. The following two lemmas are classical in algebraic geometry (in particular, Lemma \ref{trivSB} is a complex-analytic version of Ch\^atelet's theorem \cite[p.\,35]{Chatelet}), and we include their proofs for completeness. 

\begin{lem}
\label{trivSB}
Let $\pi:P\to S$ be a Severi--Brauer space over a complex space $S$. If~$\pi$ has a holomorphic section, then $\pi$ is the projectivization of a holomorphic vector bundle on~$S$.
\end{lem}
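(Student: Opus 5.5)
I would imitate the classical proof of Châtelet's theorem, with the algebraic bookkeeping replaced by holomorphic line bundles on the total space. Let $\sigma:S\to P$ be the holomorphic section and let $D:=\sigma(S)$, a closed complex subspace of $P$ isomorphic to $S$. The aim is to build a holomorphic line bundle $\cL$ on $P$ whose restriction to every fiber $P_s\simeq\P^N(\C)$ is $\cO(1)$. Once we have it, $\cE:=\pi_*\cL$ is locally free of rank $N+1$, and the evaluation map $\pi^*\cE\to\cL$ gives an isomorphism $P\isoto\P(\cE^\vee)$ over $S$ (or $\P(\cE)$, depending on the convention). Both assertions are local on $S$. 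Over an open set $U\subset S$ where $P|_U\simeq U\times\P^N$, the bundle $\cL$ differs from $\cO(1)$ by the pullback of a line bundle on $U$, because two line bundles on $U\times\P^N$ that agree fiberwise differ by a pullback from $U$. This reduces both facts to the trivial product case, where they follow from cohomology and base change for $\cO(1)$ on $\P^N$.

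To construct $\cL$, I would avoid the divisor of a hyperplane, which need not exist globally, and use the section instead. First I build a line bundle of fiber degree $-1$ or $1$ from $D$. On a trivializing open set $U_i$, choose an automorphism of $U_i\times\P^N$ over $U_i$ that moves $\sigma$ to the constant section $[1:0:\cdots:0]$. This is possible because $\PGL_{N+1}(\cO)$ acts transitively on sections locally: lift $\sigma$ locally to a nowhere-vanishing vector $v\in\cO^{N+1}$, and complete $v$ to a frame after shrinking. Hence $\pi$ is locally isomorphic to $U_i\times\P^N$ with $D$ equal to $U_i\times\{pt\}$.

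The natural canonical object is the relative tangent sheaf restricted to $D$, namely $\sigma^*T_{P/S}$, a vector bundle of rank $N$ on $S$. I would use the following intrinsic construction. Let $\beta:\tilde P\to P$ be the blow-up along $D$. Locally $\tilde P$ is $U\times\mathrm{Bl}_{pt}\P^N$, and projection from the point gives a $\P^1$-bundle over a $\P^{N-1}$-bundle $Q:=\P(\sigma^*T_{P/S})$ over $S$. That $\P^{N-1}$-bundle is a genuine projective bundle of a vector bundle, and so carries $\cO_Q(1)$. Pulling $\cO_Q(1)$ back to $\tilde P$ and twisting by the exceptional divisor $E$ gives a line bundle $\cM$. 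The classical identity $H=\beta^*h$ on $\mathrm{Bl}_{pt}\P^N$, where $H$ is the pullback of the hyperplane class of $\P^{N-1}$ and $h$ is the hyperplane class of $\P^N$, expresses $\beta^*\cO(1)$ as $\mathrm{pr}^*\cO_Q(1)\otimes\cO(E)$. So $\cM$ has degree $0$ on the fibers of $\beta$ over $D$. By the local product description, $\cL:=\beta_*\cM$ is therefore a line bundle on $P$ restricting to $\cO(1)$ on each fiber, since the degree-$0$ condition on the fibers of $\beta$ makes $\cM$ descend along $\beta$.

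The main obstacle is making this descent and gluing canonical: the local choices must be compatible on overlaps. I would argue that every step ($\beta$, $E$, $Q$, $\cO_Q(1)$) is intrinsically defined, so no cocycle appears. A simpler alternative I would try first is the following. Take $\cL':=\cO_P(D)$ when $N=1$; in general take the determinant of the relative tangent bundle, $\det T_{P/S}$, which has fiber degree $N+1$. Then combine it with a bundle of fiber degree $1$ coming from $D$ via the blow-up as above. The blow-up route seems to be the cleanest way to get degree exactly $1$.
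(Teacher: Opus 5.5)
Your proof is correct, but it produces the key object by a different route from the paper. Both arguments reduce the lemma to finding a holomorphic line bundle $\cL$ on $P$ of degree $1$ on the fibers of $\pi$. Both also finish the same way, checking locally via cohomology and base change that $\pi_*\cL$ is locally free and that $P\to\P(\pi_*\cL)$ is an isomorphism. The difference is in how $\cL$ is obtained.

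The paper gets $\cL$ cohomologically. The Leray spectral sequence for $\cO_P^{\times}$ gives an exact sequence $H^1(P,\cO_P^{\times})\to H^0(S,\RR^1\pi_*\cO_P^{\times})\to H^2(S,\cO_S^{\times})\to H^2(P,\cO_P^{\times})$. The exponential sequence identifies $\RR^1\pi_*\cO_P^{\times}$ with $\Z$ via fiber degree. The section is used only to make the last arrow injective, so the degree-$1$ class lifts to a line bundle.

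You instead build $\cL$ by hand. You blow up $D=\sigma(S)$, project from the section onto the bundle $Q$ of lines in $\sigma^*T_{P/S}$, set $\cM=\mathrm{pr}^*\cO_Q(1)\otimes\cO(E)$, and descend.

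What each buys:
\begin{itemize}
\item The paper's route is shorter and avoids analytic blow-ups of possibly nonreduced spaces. It also isolates the obstruction in $H^2(S,\cO_S^{\times})$, in line with the Brauer-group viewpoint of Lemma~\ref{lemkill2a}.
\item Your route avoids the spectral sequence and the computation of $\RR^1\pi_*\cO_P^{\times}$. It gives an explicit $\cL$, normalized by $\sigma^*\cL\cong\cO_S$, so that $\pi_*\cL$ is an extension of $\cO_S$ by $\sigma^*\Omega^1_{P/S}$.
\end{itemize}

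A few points to tidy up.
\begin{itemize}
\item The identity on $\mathrm{Bl}_{pt}\P^N$ is $H=\beta^*h-E$, not $H=\beta^*h$. The formula you actually use, $\beta^*\cO(1)\cong\mathrm{pr}^*\cO_Q(1)\otimes\cO(E)$, is the correct one.
\item Descent does not follow from fiber degree $0$ alone. What you need is that, over a trivializing $U$ in which $\sigma$ is constant, $\cM\cong\beta^*\cN_U$ with $\cN_U$ the pullback of $\cO_{\P^N}(1)$. You also need $\beta_*\cO_{\tilde P}=\cO_P$, which holds because the blow-up is $U\times\mathrm{Bl}_{pt}\P^N$ by flat base change. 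Then $\beta_*\cM$ is a line bundle and $\beta^*\beta_*\cM\to\cM$ is an isomorphism, both checked locally.
\item The only step whose intrinsic nature needs a sentence is the projection $\tilde P\to Q$. Two trivializations normalizing $\sigma$ to $[1:0:\dots:0]$ differ by a map into the stabilizer of that point in $\PGL_{N+1}$. This stabilizer acts compatibly on $\mathrm{Bl}_{pt}\P^N$ and on $\P(T_{pt}\P^N)$, so the local projections glue and no cocycle arises.
\item Your ``simpler alternative'' via $\det T_{P/S}$ does not produce a degree-$1$ bundle for $N\geq 2$. The blow-up argument does not need it.
\end{itemize}
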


\begin{proof}
Since $\cO_S\isoto\pi_*\cO_P$, one has $\cO_S^{\times}\isoto\pi_*\cO_P^{\times}$. The Leray spectral sequence
$$E^{p,q}_2=H^p(S,\RR^q\pi_*\cO_P^{\times})\Rightarrow H^{p+q}(P,\cO_P^{\times})$$ 
therefore gives rise to an exact sequence
\begin{equation}
\label{5terms}
H^1(P,\cO_P^{\times})\to H^0(S,\RR^1\pi_*\cO_P^{\times})\to H^2(S,\cO_S^{\times})\to H^2(P,\cO_P^{\times}).
\end{equation}
That $\pi$ has a holomorphic section shows that $H^2(S,\cO_S^{\times})\to H^2(P,\cO_P^{\times})$ is injective. 
If $U$ is a contractible Stein open subset of $S$ that trivializes~$\pi:P\to S$, then 
$$H^1(\pi^{-1}(U),\cO_{\pi^{-1}(U)}^{\times})=H^1(\P^N(\C)\times U,\cO_{\P^N(\C)\times U}^{\times})=\Z$$
(use the exponential short exact sequence \cite[Lemma p.\,142]{GRStein} on $\P^N(\C)\times U$). We deduce an isomorphism $\RR^1\pi_*\cO_P^{\times}\to\Z$ given by the degree of line bundles on the fibers of $\pi$. It now follows from~\eqref{5terms} that there is a holomorphic line bundle~$\cL$ on $P$ restricting to~$\cO_{\P^N(\C)}(1)$ on the fibers of $\pi$. One then checks locally (over open subsets~$U$ of $S$ as above) that~$\pi_*\cL$ is locally free and that the natural holomorphic map~${P\to\P(\pi_*\cL)}$ over $S$ is an isomorphism (use cohomology and base change in its form \cite[III, Corollary~3.9]{BS}).
\end{proof}

\begin{lem}
\label{lemBertini}
Let $S$ be a finite-dimensional Stein space. Let $\pi:P\to S$ be a Severi--Brauer space of relative dimension $N$. There exist $d\geq 1$ and a closed complex subspace $T\subset P$ such that~$\pi|_T:T\to S$ is finite flat of degree $d$.
\end{lem}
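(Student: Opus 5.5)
We will cut $P$ by $N$ relative hypersurfaces, so that the resulting subspace $T$ is, fibrewise, the intersection of $N$ hypersurfaces in $\P^N(\C)$; such an intersection is finite of fixed degree whenever it is zero-dimensional. The point is to produce global relative hypersurface sections even though $P$ need not be the projectivization of a vector bundle. First we build an ample-type line bundle. The relative anticanonical bundle $\omega_{P/S}^{-1}$ is a holomorphic line bundle on $P$ restricting to $\cO_{\P^N}(N+1)$ on every fibre. Set $\cL:=\omega_{P/S}^{-r}$ for some $r\geq1$. Because $\pi$ is locally trivial, cohomology and base change (as in the proof of Lemma~\ref{trivSB}, via \cite[III, Corollary~3.9]{BS}) shows that $\cE:=\pi_*\cL$ is a holomorphic vector bundle on $S$. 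Its fibre at $s$ is $H^0(P_s,\cO(r(N+1)))$, and the higher direct images vanish.

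Next, a section $\sigma\in H^0(S,\cE)=H^0(P,\cL)$ defines a relative hypersurface in $P$. We choose $N$ sections $\sigma_1,\dots,\sigma_N\in H^0(S,\cE)$ and set $T:=\{\sigma_1=\dots=\sigma_N=0\}\subset P$. Suppose that for every $s\in S$ the restrictions $\sigma_i|_{P_s}$ have no common zero of positive dimension. Then $T\to S$ is proper with finite fibres, hence finite. Locally $P\simeq U\times\P^N$, and $T$ is cut out by $N$ equations in a space that is flat over $U$ of relative dimension $N$, with zero-dimensional fibres. By the local flatness criterion (a regular sequence on fibres, i.e.\ complete intersections in the Cohen--Macaulay fibres), $T$ is flat over $S$ and even lci. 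Bézout then gives constant degree $d=(r(N+1))^N$.

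The main obstacle is choosing the $\sigma_i$ globally so that the fibrewise condition holds at every point of the possibly noncompact, possibly nonreduced $S$. Since the condition only involves fibres, nilpotents are irrelevant. We argue as follows.
\begin{enumerate}[label=(\alph*)]
\item Because $S$ is Stein, $\cE$ is generated by global sections (Cartan's Theorem A). Moreover, since $\dim S=n<\infty$, finitely many sections generate $\cE$, giving a surjection $\cO_S^M\to\cE$ (a standard fact for finite-dimensional Stein spaces).
\item Consider the space $W$ of $N$-tuples of elements of $\C^M$. Inside $S\times W$, the bad locus is the set of pairs such that the induced $N$ sections have a positive-dimensional common zero on $P_s$. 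Fibrewise over $s$, this bad set in $\mathrm{Gr}$-type coordinates has codimension growing with $r$: the incidence dimension count shows that $N$ general hypersurfaces of degree $e$ meeting in a curve is a condition of codimension $\geq e+1$, roughly.
\item Take $r$ large enough that this codimension exceeds $n=\dim S$. Then the bad locus in $S\times W$ is an analytic subset of dimension $<\dim W$. Hence its complement in $W$ is nonempty, in fact dense, since a countable union of proper analytic images is meagre by Sard/Baire.
\end{enumerate}
Any tuple in this complement gives the desired $\sigma_i$. This makes $d$ depend only on $N$ and $n$, which is consistent with the subsequent Remark.
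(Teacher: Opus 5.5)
Your proof is correct, but it reaches the key step by a different mechanism than the paper. Both arguments cut $P$ by $N$ sections of a power of the relative anticanonical bundle. Both rest on the fact that, in $H^0(\P^N(\C),\cO(D))^{\oplus N}$ with $D=r(N+1)$, the locus of $N$-tuples that fail to cut out a complete intersection has codimension tending to infinity with $D$. Finiteness, flatness and the degree $d=D^N$ then follow as you say.

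The paper only needs this codimension to exceed $\frac{n}{2}$, so that the good locus $\Theta$ is $(n-1)$-connected. Hamm's theorem and obstruction theory then give a continuous section of the fibration $E^0\to S$ with fibre $\Theta$. The Oka principle (Theorem~\ref{thOka}, applicable since $\Theta$ is Oka) makes that section holomorphic.

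You argue instead by general position, which needs four inputs:
\begin{itemize}
\item the stronger bound codimension $>n$;
\item finitely many global generators of $\cE$, which follows from Proposition~\ref{propmonoidaleq};
\item analyticity of the bad locus $B\subset S\times W$, which comes from Cartan--Remmert semicontinuity of fibre dimension on the universal zero locus together with Remmert's proper mapping theorem;
\item the fact that a holomorphic image of a space of dimension $<\dim W$ has measure zero.
\end{itemize}

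Your estimate ``codimension $\geq D+1$, roughly'' is right and easy to make precise. The bad set is the union, over $0\leq j<N$, of the loci where $\sigma_{j+1}$ vanishes on a component of the complete intersection $\{\sigma_1=\dots=\sigma_j=0\}$. Each such locus has codimension $\geq D+1$, because forms of degree $D$ restrict to a space of dimension $\geq D+1$ on any positive-dimensional subvariety.

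Your route avoids Oka theory, obstruction theory and Hamm's theorem entirely, and it shows that a generic linear combination of the generators works. The paper's route needs only the weaker codimension bound, hence a smaller power $r$ and a smaller $d$. It also needs no finite generating family and no dimension count on $S\times W$, and it fits the Oka-theoretic method used throughout the article. In both approaches $d$ depends only on $N$ and $n$.
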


\begin{proof}
Fix $e\geq 1$. Let $\cL:=\cO_P(-K_{P/S})$ be the relative anticanonical line bundle of~$\pi$, which restricts to $\cO_{\P^N(\C)}(N+1)$ on the fibers of $\pi$. By cohomology and base change (see \eg \cite[III, Corollary~3.9]{BS}), the sheaf $\cE:=\pi_*(({\cL^{\otimes e})^{\oplus N}})$ is locally free and, letting $f:E\to S$ be the geometric vector bundle associated with~$\cE$, the fiber~$E_s$ of~$f$ over any~$s\in S$ can be identified with~$H^0(\P^{N}(\C),\cO_{\P^N(\C)}(e(N+1)))^{\oplus N}$ (this identification is well defined up to the natural  action of $\GL_{N+1}(\C)$).

Consider the Zariski-open subset~$\Theta$ of~$H^0(\P^{N}(\C),\cO_{\P^N(\C)}(e(N+1)))^{\oplus N}$ consisting of $N$-tuples~$(\sigma_1,\dots,\sigma_N)$ such that~${\{\sigma_1=\dots=\sigma_N=0\}}$ is a complete intersection in $\P^N(\C)$. As $\Theta$ is~$\GL_{N+1}(\C)$\nobreakdash-invariant, there exists an open subset $E^0\subset E$ such that $E^0\cap E_s$ is sent to $\Theta$ by the above identifications, for all $s\in S$. The restriction~$f^0:E^0\to S$ of $f$ is a locally trivial holomorphic fibration with fiber $\Theta$.

Let $n$ be the dimension of $S$. By \cite[Corollary 3.2]{BD}, if $e$ has been chosen big enough, then the complement of $\Theta$ in $H^0(\P^{N}(\C),\cO_{\P^N(\C)}(e(N+1)))^{\oplus N}$ has complex codimension $>\frac{n}{2}$, and it follows that~$\pi_i(\Theta)=0$ for $1\leq i\leq n-1$. As~$S$ has the homotopy type of a CW complex of dimension $\leq n$ (see \cite[Korollar]{Hamm}), one can therefore use obstruction theory to find a continuous section $g:S\to E^0$ of~$f^0$ (apply \cite[Theorem 34.2]{Steenrod}). 
As $\Theta$ is an Oka manifold (see \cite[Proposition~5.6.10]{Forstneric}), Theorem~\ref{thOka} shows that~$g$ can be chosen to be holomorphic.

The section $g$ corresponds to a section $(\sigma_1,\dots,\sigma_N)\in H^0(S,\cE)=H^0(P,\cL^{\otimes e})^{\oplus N}$. Define $T:=\{\sigma_1=\dots=\sigma_N=0\}\subset P$. By construction, the complex subspace~$T$ of~$P$ is a relative complete intersection of codimension $N$ over $S$ and hence~${\pi|_T:T\to S}$ is finite flat of degree $d:=(e(N+1))^N$.
\end{proof}

\subsection{Classes of degree \texorpdfstring{$1$}{1} or \texorpdfstring{$2$}{2}}
\label{par12}

Lemmas \ref{lemkill2a} and \ref{lemkill2b} below will be combined to prove Proposition \ref{kill12}.

\begin{lem}
\label{lemkill2a}
Let $S$ be a finite-dimensional Stein space. Fix $\beta\in H^3(S,\Z)[m]$ for some $m\geq 1$. There exist $d\geq 1$ and a holomorphic map $p:T\to S$ that is finite flat of degree $d$ with $p^*\beta=0$ in $H^3(T,\Z)[m]$. 
\end{lem}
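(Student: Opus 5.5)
The class $\beta\in H^3(S,\Z)[m]$ will be killed by interpreting it as a Brauer class and splitting the corresponding Severi--Brauer space, using Lemma~\ref{lemBertini}. On a Stein space, the exponential sequence $0\to\Z\to\cO_S\to\cO_S^{\times}\to 0$ and the vanishing of $H^i(S,\cO_S)$ for $i\geq 1$ give isomorphisms $H^i(S,\cO_S^{\times})\isoto H^{i+1}(S,\Z)$ for $i\geq 1$. In particular,
\[H^2(S,\cO_S^{\times})\simeq H^3(S,\Z).\]
Thus $\beta$ corresponds to an $m$-torsion class $\gamma\in H^2(S,\cO_S^{\times})$.

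The key input is the analytic analogue of ``Brauer group equals cohomological Brauer group'' for finite-dimensional Stein spaces, established in \cite{Brauerstein}. It says that every torsion class in $H^2(S,\cO_S^{\times})$ is the image of some class in $H^1(S,\PGL_{N+1}(\cO_S))$ under the boundary map attached to
\[1\to\cO_S^{\times}\to\GL_{N+1}(\cO_S)\to\PGL_{N+1}(\cO_S)\to 1.\]
I would invoke this to get a Severi--Brauer space $\pi:P\to S$ of some relative dimension $N$ whose class is $\gamma$. This step, the existence of an Azumaya representative, is the genuinely deep part. I would not reprove it here; I would simply cite \cite{Brauerstein}.

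Next, Lemma~\ref{lemBertini} provides a closed subspace $T\subset P$ such that $p:=\pi|_T:T\to S$ is finite flat of some degree $d\geq 1$. Then $P\times_S T\to T$ is a Severi--Brauer space with a holomorphic section, induced by the inclusion $T\subset P$. By Lemma~\ref{trivSB}, it is the projectivization of a holomorphic vector bundle, so its class in $H^1(T,\PGL_{N+1}(\cO_T))$ lifts to $H^1(T,\GL_{N+1}(\cO_T))$. Hence $p^*\gamma=0$ in $H^2(T,\cO_T^{\times})$, by naturality of the boundary map under pullback along $p$.

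Finally, $T$ is Stein, being finite over a Stein space, and it is finite-dimensional. So the exponential isomorphism also holds on $T$, and it is natural with respect to $p$. This gives $p^*\beta=0$ in $H^3(T,\Z)$. The only points to check are that the exponential sequence is valid for possibly nonreduced spaces (which it is, since $\Z\to\cO_S$ still has image in the kernel of $\exp$ and local exactness holds) and the compatibility of the various boundary maps with pullback. Both are routine.
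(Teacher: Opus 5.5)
Your proposal is correct and follows essentially the same route as the paper. Both arguments represent $\beta$ by a holomorphic Azumaya algebra via \cite{Brauerstein}, split the associated Severi--Brauer space over the finite flat $T\subset P$ given by Lemma~\ref{lemBertini}, and use Lemma~\ref{trivSB} to lift the pulled-back $\PGL_{N+1}$-class to $\GL_{N+1}$; the only cosmetic difference is that you pass through $H^2(S,\cO_S^{\times})$ via the exponential sequence, whereas the paper quotes the isomorphisms $\Br_{\an}(S)\isoto\Br_{\topo}(S)\isoto H^3(S,\Z)_{\tors}$ directly.
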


\begin{proof}
Let~$\Br_{\an}(S)$ and $\Br_{\topo}(S)$ be the analytic and topological Brauer groups of~$S$, based on holomorphic and topological Azumaya algebras on $S$ (see~\mbox{\cite[\S 2.1]{Brauerstein}}). There are natural isomorphisms
\begin{equation}
\label{Brauer}
\Br_{\an}(S)\isoto\Br_{\topo}(S)\isoto H^3(S,\Z)_{\tors}
\end{equation}
(see \cite[Theorem 3.5]{Brauerstein}). Consider a holomorphic Azumaya algebra $\cA$ on $S$ whose class in $H^3(S,\Z)_{\tors}$ (see \eqref{Brauer}) is equal to $\beta$. Let $N+1$ be the degree of~$\cA$, so~$\cA$ is classified by an element $\alpha\in H^1(S,\PGL_{N+1}(\cO_S))$ (see \cite[\S 2.1]{Brauerstein}). Let~$\pi:P\to S$ be the Severi--Brauer space associated with $\alpha$ (see \S\ref{parSB}). By Lemma~\ref{lemBertini}, there exists a complex subspace $T\subset P$ such that $p:=\pi|_T:T\to S$ is finite flat of some degree~${d\geq 1}$. It follows from Lemma \ref{trivSB} that $p^*\alpha\in H^1(T,\PGL_{N+1}(\cO_T))$ lifts to~$H^1(T,\GL_{N+1}(\cO_T))$. We deduce that the image of $p^*\alpha$ in $H^2(T,\cO_T^{\times})$ vanishes, so the class of $p^*\cA$ in $\Br_{\an}(T)$ vanishes (see \cite[(2.1)]{Brauerstein}), and hence that $p^*\beta=0$.
\end{proof}

\begin{lem}
\label{lemkill2b}
Let $S$ be a finite-dimensional Stein space. Fix $\gamma\in H^2(S,\Z)$. For all~${m\geq 1}$, there exist $d\geq 1$ and a holomorphic map $p:\wS\to S$ that is finite flat of degree $d$ such that $p^*\gamma$ is divisible by $m$ in $H^2(\wS,\Z)$.
\end{lem}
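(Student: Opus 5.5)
We want a finite flat cover on which $\gamma$ becomes divisible by $m$. Since $S$ is Stein, the exponential sequence $0\to\Z\to\cO_S\to\cO_S^\times\to 0$ together with $H^1(S,\cO_S)=H^2(S,\cO_S)=0$ gives an isomorphism $c_1:H^1(S,\cO_S^\times)\isoto H^2(S,\Z)$. So $\gamma=c_1(\cL)$ for some holomorphic line bundle $\cL$ on $S$. The plan is to construct $p:\wS\to S$ finite flat of degree $m$ such that $p^*\cL\simeq\cM^{\otimes m}$ for a line bundle $\cM$ on $\wS$. Then $p^*\gamma=m\,c_1(\cM)$ is divisible by $m$.

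The natural construction is a cyclic cover given by extracting an $m$-th root of a section. Since $\cL$ is a line bundle on a Stein space, $H^0(S,\cL)$ is large, but a single section usually vanishes somewhere, and a naive root cover would then fail to be flat or to trivialize the right bundle. Instead I would work inside the total space of $\cL$ twisted as follows. Let $\sigma\in H^0(S,\cL^{\otimes m})$ be a nowhere vanishing section. If such a $\sigma$ exists, define
$$\wS:=\{v\in \cL\mid v^{\otimes m}=\sigma\}\subset \mathbb{V}(\cL),$$
a closed subspace of the total space of $\cL$. Locally it is $\{t^m=u\}$ with $u$ a unit, so $\wS\to S$ is finite flat (even finite étale) of degree $m$. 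The tautological section trivializes $p^*\cL$, so $p^*\gamma=0$. Of course $\cL^{\otimes m}$ is trivial only if $m\gamma=0$, so this works directly only for torsion $\gamma$.

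For general $\gamma$ I would take a different route: first reduce to the case where $\cL$ has a section with controlled zeros, then use a relative root construction. Choose $s\in H^0(S,\cL)$. Working in $\P(\cO_S\oplus\cL)$, with homogeneous fiber coordinate $[x:y]$ where $y$ has values in $\cL$, set
$$\wS:=\{y^{m}=s\,x^{m}\}\subset \P(\cO_S\oplus\cL^{\otimes ?})\,.$$
To make this typecheck, I would instead use $\P(\cO_S\oplus\cM)$ with an auxiliary bundle $\cM$ and a section of $\cM^{\otimes m}\otimes\cL^{-1}$. A cleaner approach is to mimic Lemma~\ref{lemBertini}. Consider the $\P^1$-bundle $P=\P(\cO_S\oplus\cL)\to S$, whose line bundle $\cO_P(1)$ satisfies that $\cO_P(m)$ restricted to a suitable multisection differs from pullbacks of $\cL$ by $m$-th powers. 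Then use Oka theory, as in Lemma~\ref{lemBertini}, to find a section of $\cO_P(m)\otimes\pi^*\cL^{a}$ (with a suitable twist $a$) whose zero locus $T$ is finite flat of degree $m$ over $S$. On $T$, the relation $\cO_P(m)|_T\simeq\pi^*\cL^{-a}|_T$ gives $p^*\gamma\in m\,H^2(T,\Z)+\dots$. Concretely, on $P$ one has $\mathrm{Pic}$ generated by $\pi^*\mathrm{Pic}(S)$ and $\cO_P(1)$. Choosing the section $\sigma=y^m - s'x^m$ of $\cO_P(m)\otimes\pi^*\cL^{\otimes m}$-type bundles, one gets on $T$ that the pullback of $\cL$ becomes $\cO_P(1)|_T^{\otimes m}$ up to a trivial factor, exactly as needed.

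The main obstacle is ensuring the zero locus is finite over every point, i.e.\ that the section never vanishes identically on a fiber. With $y^m-s'x^m$ this holds automatically: on each fiber this is a nonzero binary form of degree $m$, so $T\to S$ is finite flat of degree $m$ by the local criterion (a monic-type equation in the affine chart $x\neq0$, and $T$ avoids $x=0$ because there $y^m=0$ forces $y=0$). The remaining work is a careful bookkeeping of twists in $\mathrm{Pic}(P)$ so that the restriction identity yields exactly $p^*\cL\simeq(\text{line bundle})^{\otimes m}$. I would carry this out by choosing the bundle $P=\P(\cO_S\oplus\cL^{-1})$ so that $y^m$ and $x^m$ live in the same twist of $\cO_P(m)$ differing by $\pi^*\cL^{\otimes m}$, and taking $s'$ to be a nowhere-zero-free choice permitted by the Stein hypothesis.
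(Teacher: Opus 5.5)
Your first step ($\gamma=c_1(\cL)$ via the exponential sequence) is fine, and $T=\{y^m=s'x^m\}$ is indeed finite flat of degree $m$ over $S$. The step that is supposed to make $p^*\cL$ an $m$-th power, however, fails.

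For the zero locus $T$ of a section of a line bundle $\cN$ on $P$, one does not have $\cN|_T\simeq\cO_T$: $\cN|_T$ is the normal bundle of $T$. So there is no relation of the form $\cO_P(m)|_T\simeq\pi^*\cL^{-a}|_T$ to exploit. Concretely, as you observe yourself, $x$ does not vanish on $T$. Hence $\cO_P(1)|_T$ is trivial, and $T$ is just the cyclic cover $\{v^m=s'\}$ in the total space of $\cL^{\pm1}$, with $s'\in H^0(S,\cL^{\otimes\pm m})$. All it tells you is that $p^*\cL^{\pm1}$ has a section $v$ with $v^m=p^*s'$, i.e.\ that $p^*(\cL^{\otimes m})$ is an $m$-th power, which you already knew. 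Your claimed identity $p^*\cL\simeq(\cO_P(1)|_T)^{\otimes m}$ up to a trivial factor would even make $p^*\cL$ trivial. That is impossible for non-torsion $\gamma$, since the transfer of a finite flat map of degree $m$ satisfies $p_*p^*\gamma=m\gamma$.

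In fact no cover of degree $m$ can work in general. If $p$ is finite flat of degree $d$ and $p^*\gamma=m\beta$, the projection formula gives $d\gamma^2=p_*(p^*\gamma^2)=m^2p_*(\beta^2)$. Take the affine fourfold $S=(\P^2\times\P^2)\setminus D$, with $D$ a smooth divisor of bidegree $(1,1)$, and let $\gamma$ be the restriction of the hyperplane class of the first factor. The Gysin sequence gives $H^4(S,\Z)\simeq\Z$, generated by $\gamma^2$, so $m^2\mid d$.

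The missing idea is to use $n+1$ sections simultaneously, where $n=\dim S$. By \cite[Theorem~1]{Kripke}, $\cL$ is generated by global sections $\sigma_0,\dots,\sigma_n$. These define $\sigma:S\to\P^n(\C)$ with $\sigma^*\cO(1)\simeq\cL$. The paper takes for $p$ the base change along $\sigma$ of the finite flat map $r:[x_0:\dots:x_n]\mapsto[x_0^m:\dots:x_n^m]$ of degree $m^n$, which satisfies $r^*\cO(1)\simeq\cO(m)$. When $\cL$ happens to be generated by two sections, this is a degree-$m$ root cover inside $S\times\P^1$, which is presumably what your $\P^1$-bundle construction was reaching for. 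The example above shows that two sections do not suffice in general.
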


\begin{proof}
The cohomology long exact sequence associated with the exponential short exact sequence $0\to\Z\xrightarrow{2\pi i}\cO_S\xrightarrow{\exp}\cO_S^{\times}\to 0$ (for which see \cite[Lemma p.\,142]{GRStein}) and the vanishing of $H^2(S,\cO_S)$ (because~$S$ is Stein) show that $\gamma$ is the first Chern class of a holomorphic line bundle~$\cL$ on $S$. 

Let $n$ be the dimension of $S$. By \cite[Theorem~1]{Kripke}, the line bundle $\cL$ (which is globally generated since $S$ is Stein) is generated by $n+1$ global sections~$\sigma_0,\dots,\sigma_n$. Let~${\sigma:S\to \P^n(\C)}$ be the holomorphic map defined by ${\sigma(s):=[\sigma_0(s):\dots:\sigma_n(s)]}$, so $\sigma^*\cO_{\P^n(\C)}(1)\simeq\cL$. The degree $m^n$ finite flat holomorphic map $r:\P^n(\C)\to\P^n(\C)$ given by ${r([x_0:\dots:x_n])=[x_0^m:\dots:x_n^m]}$ is such that $r^*\cO_{\P^n(\C)}(1)\simeq\cO_{\P^n(\C)}(m)$. Define $p:\wS\to S$ to be the base change of $r$ by $\sigma$. Then $p^*\cL$ is the $m$-th tensor power of some holomorphic line bundle on $\wS$, and the lemma is proved.
\end{proof}

\begin{prop}
\label{kill12}
Theorem \ref{thkill} holds for $k\in\{1,2\}$.
\end{prop}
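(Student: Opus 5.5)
For $k=1$, I would kill $\alpha$ on a cyclic cover. Since $S$ is Stein, $H^1(S,\cO_S)=H^2(S,\cO_S)=0$. The exponential sequence therefore gives $\cO(S)^\times/\exp(\cO(S))\simeq H^1(S,\Z)$. Comparing with the Kummer sequence $0\to\Z/m\to\cO_S^\times\xrightarrow{m}\cO_S^\times\to 0$ (exact on complex spaces, since $m$-th roots of units exist locally), I get an exact sequence
$$\cO(S)^\times\xrightarrow{m}\cO(S)^\times\to H^1(S,\Z/m)\to \mathrm{Pic}(S)[m].$$
Choose $\gamma\in H^2(S,\Z)[m]$ to be the image of $\alpha$ under the Bockstein. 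By Lemma~\ref{lemkill2b}, a finite flat cover $q:S'\to S$ makes $q^*\gamma$ divisible by $m$. However, divisibility of $q^*\gamma$ does not by itself kill $q^*\gamma$, so I would rather argue directly with line bundles. The image of $\alpha$ in $\mathrm{Pic}(S)[m]=H^2(S,\Z)[m]$ is a line bundle $\cL$ together with a trivialization $\cL^{\otimes m}\simeq\cO_S$. The space $\{v\in\cL : v^{\otimes m}=1\}$ is the $\mu_m$-torsor representing $\alpha$, and it is a finite étale cover of degree $m$. Pulling $\alpha$ back to this torsor kills it, since any torsor becomes trivial after pullback to itself. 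This yields $p$ finite flat of degree $m$ with $p^*\alpha=0$, so the case $k=1$ is trivial, as announced in the introduction.

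For $k=2$, I would proceed in two steps. Let $\beta\in H^3(S,\Z)[m]$ be the Bockstein of $\alpha$. By Lemma~\ref{lemkill2a}, there is a finite flat $p_1:T\to S$ with $p_1^*\beta=0$. Then $p_1^*\alpha$ is the reduction modulo $m$ of a class $\gamma\in H^2(T,\Z)$. The space $T$ is again a finite-dimensional Stein space, being finite over $S$. By Lemma~\ref{lemkill2b}, there is a finite flat $p_2:\wS\to T$ with $p_2^*\gamma=m\delta$, so $p_2^*\gamma$ reduces to $0$ modulo $m$. Setting $p:=p_1\circ p_2$ gives a finite flat map of degree $d_1d_2$ with $p^*\alpha=0$ in $H^2(\wS,\Z/m)$.

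The only points to check are that compositions of finite flat maps of constant degree are finite flat with multiplicative degree, and that finite covers of finite-dimensional Stein spaces are finite-dimensional Stein. Both are standard. I expect no real obstacle here: the substantive input is already contained in Lemmas~\ref{lemkill2a} and~\ref{lemkill2b}.
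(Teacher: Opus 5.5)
Your argument is correct and is essentially the paper's. For $k=2$ you combine Lemmas~\ref{lemkill2a} and~\ref{lemkill2b} exactly as the paper does, and for $k=1$ your $\mu_m$-torsor $\{v\in\cL : v^{\otimes m}=1\}$ is just the unramified cyclic cover of degree $m$ classified by $\alpha$, which the paper uses directly. The Kummer-sequence detour and the abandoned appeal to Lemma~\ref{lemkill2b} are unnecessary, since $\alpha\in H^1(S,\Z/m)$ already classifies a $\Z/m$-covering of $S$; note also that the pair $(\cL,\cL^{\otimes m}\simeq\cO_S)$ is determined by $\alpha$ itself, not by its image in $\mathrm{Pic}(S)[m]$.
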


\begin{proof}
When $k=1$, one can take $p:\wS\to S$ to be the unramified cyclic cover of degree $m$ that the class $\alpha\in H^1(S,\Z/m)$ classifies. If $k=2$, we let $\beta\in H^3(S,\Z)[m]$ be the image of $\alpha\in H^2(S,\Z/m)$ by the boundary map of the long exact sequence of cohomology associated with $0\to\Z\xrightarrow{m}\Z\to\Z/m\to 0$. Lemma \ref{lemkill2a} shows that, after replacing $S$ with a finite flat cover, we may assume that $\beta=0$. Then $\alpha$ is the reduction modulo $m$ of a class $\gamma\in H^2(S,\Z)$ to which one can apply Lemma~\ref{lemkill2b}.
\end{proof}

\subsection{Classes of degree \texorpdfstring{$\geq 5$}{at least 5}}
\label{par5}

Recall that $\alpha_{k,m}$ was defined in \S\ref{parOka} to be a generator of ${H^k(\Omega_{k,m},\Z/m)\simeq\Z/m}$. Let $\varphi_{k,m}:\Omega_{k,m}\to K(\Z/m,k)$ be a continuous map classifying $\alpha_{k,m}$. Let $F_{k,m}$ be the homotopy fiber of~$\varphi_{k,m}$. 

\begin{lem}
\label{homotofi}
Fix $k\geq 3$ and $m\geq 1$. Then $\pi_i(F_{k,m})=0$ for $1\leq i\leq k$ and $\pi_i(F_{k,m})$ is finite for $i>k$. 
\end{lem}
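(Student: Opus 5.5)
We would use the long exact sequence of homotopy groups of the fibration $F_{k,m}\to\Omega_{k,m}\xrightarrow{\varphi_{k,m}} K(\Z/m,k)$, together with our knowledge of the homotopy groups of $\Omega_{k,m}\simeq M(\Z/m,k)$ from Proposition~\ref{propOOka}. Since $K(\Z/m,k)$ has a single nonzero homotopy group, the sequence gives isomorphisms $\pi_i(F_{k,m})\isoto\pi_i(\Omega_{k,m})$ for $i\neq k,k-1$. Around degree $k$ it gives an exact sequence
$$0\to\pi_k(F_{k,m})\to\pi_k(\Omega_{k,m})\to\pi_k(K(\Z/m,k))=\Z/m\to\pi_{k-1}(F_{k,m})\to\pi_{k-1}(\Omega_{k,m}).$$
Here the first $0$ is the image of $\pi_{k+1}(K(\Z/m,k))=0$.

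\emph{Low degrees.} By \S\ref{parMoore}, $\pi_i(M(\Z/m,k))=0$ for $0<i<k$, so $\pi_i(F_{k,m})=0$ for $1\leq i\leq k-2$. For $i=k-1$ and $i=k$, it suffices to show that $\pi_k(\Omega_{k,m})\to\Z/m$ is an isomorphism. Since $k\geq 3\geq 2$ and $\Omega_{k,m}$ is $(k-1)$-connected, Hurewicz gives $\pi_k(\Omega_{k,m})\simeq H_k(\Omega_{k,m},\Z)=\Z/m$. The map to $\pi_k(K(\Z/m,k))=H_k(K(\Z/m,k),\Z)$ is compatible with Hurewicz, so it is identified with $(\varphi_{k,m})_*$ on $H_k(-,\Z)$. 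By the universal coefficient theorem, $H^k(X,\Z/m)=\mathrm{Hom}(H_k(X,\Z),\Z/m)$ for both spaces, since $H_{k-1}=0$ in both cases. Because $\varphi_{k,m}$ pulls back the fundamental class to the generator $\alpha_{k,m}$, the dual map $\mathrm{Hom}(\Z/m,\Z/m)\to\mathrm{Hom}(\Z/m,\Z/m)$ sends the identity to a generator. So $(\varphi_{k,m})_*$ is an automorphism of $\Z/m$, and hence $\pi_k(F_{k,m})=\pi_{k-1}(F_{k,m})=0$. We also need $\pi_{k-1}(\Omega_{k,m})=0$ for the injectivity argument at $\pi_{k-1}$, which holds.

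\emph{Higher degrees.} For $i>k$ the isomorphism $\pi_i(F_{k,m})\simeq\pi_i(\Omega_{k,m})$ reduces the claim to finiteness of the higher homotopy groups of $M(\Z/m,k)$. This space is a simply connected finite CW complex (as $k\geq 3$), so each $\pi_i$ is finitely generated by Serre's theorem. Its rational homology vanishes in positive degrees by the computation in \S\ref{parMoore}. Then Serre's mod-$\mathcal{C}$ Hurewicz theorem for the class of finite abelian groups (equivalently, rational homotopy theory) shows that all $\pi_i(M(\Z/m,k))\otimes\Q$ vanish, so these groups are finite.

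The main point requiring care is the degree-$k$ identification, namely that $\varphi_{k,m}$ induces an isomorphism on $\pi_k$. The finiteness statement for $i>k$ is a citation of Serre's theorem rather than a genuine obstacle. Connectedness of $F_{k,m}$, which gives $\pi_0$ if needed, follows from the surjectivity of $\pi_1(K(\Z/m,k))=0$, i.e. is automatic.
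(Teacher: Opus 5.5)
Your proof is correct and follows the same route as the paper. The universal coefficient and Hurewicz theorems show that $\varphi_{k,m}$ induces an isomorphism on $\pi_k$, Serre's theorem gives finiteness of the higher homotopy groups of $\Omega_{k,m}\simeq M(\Z/m,k)$, and the long exact sequence of the fibration concludes; you simply write out the degree-by-degree bookkeeping in that sequence, which the paper leaves implicit.
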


\begin{proof}
As $\varphi_{k,m}$ induces an isomorphism on $H^k(-,\Z/m)$, it induces an isomorphism on $H_k(-,\Z)$ by the universal coefficient theorem, hence on $\pi_k(-)$ by the Hurewicz theorem (noting that $\pi_i(\Omega_{k,m})=0$ for $0<i<k$ by Proposition \ref{propOOka} and \S\ref{parMoore}). As the homology groups of $\Omega_{k,m}$ in positive degree are finite (see Proposition \ref{propOOka} and~\S\ref{parMoore}), so are its homotopy groups by \cite[Corollaire 2 p.\,274]{Serrehomotopie}. The long exact sequence of homotopy groups of a fibration completes the proof of the lemma.
\end{proof}

\begin{prop}
\label{kill5}
Theorem \ref{thkill} holds for $k\geq 5$.
\end{prop}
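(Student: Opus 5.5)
Fix $k\geq 5$, $m\geq 1$ and $\alpha\in H^k(S,\Z/m)$, and let $n$ be the dimension of $S$. We argue by downward induction on an auxiliary integer $j$, using the homotopy fiber $F_{k,m}$ of $\varphi_{k,m}:\Omega_{k,m}\to K(\Z/m,k)$. By \cite[Korollar]{Hamm}, $S$ has the homotopy type of a CW complex of dimension $\leq n$. The goal is to construct, after replacing $S$ by a finite flat cover, a continuous map $f:S\to\Omega_{k,m}$ with $f^*\alpha_{k,m}=\alpha$. Equivalently, we seek a lift of the map $a:S\to K(\Z/m,k)$ classifying $\alpha$ along $\varphi_{k,m}$. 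Once $f$ exists, Theorem~\ref{thOka} (applied to the trivial fibration $S\times\Omega_{k,m}\to S$, whose fiber is Oka by Proposition~\ref{propOOka}) lets us take $f$ holomorphic. We then base change the finite flat degree $m$ map $p_{k,m}:\wOmega_{k,m}\to\Omega_{k,m}$ along $f$. This produces a finite flat map $p:\wS\to S$ of degree $m$ such that $p^*\alpha$ is pulled back from $p_{k,m}^*\alpha_{k,m}=0$ (Proposition~\ref{kill}). Hence $p^*\alpha=0$. Finally, a composite of finite flat maps of degrees $d,d'$ is finite flat of degree $dd'$, and finite flat covers of Stein spaces are Stein and of the same dimension, so covers can be stacked freely.

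To construct the lift, we use obstruction theory for the map $\varphi_{k,m}$, converted into a fibration. We lift over the skeleta of $S$ one at a time. By Lemma~\ref{homotofi}, $\pi_i(F_{k,m})=0$ for $i\leq k$, so a lift exists over the $(k+1)$-skeleton. The successive obstructions lie in $H^{i+1}(S,\pi_i(F_{k,m}))$ for $k<i<n$, with finite coefficient groups. Local coefficients are not an issue because $K(\Z/m,k)$ is simply connected. These obstructions depend on the earlier choices, so instead of a single induction we prove the following stronger statement by downward induction on $j$.

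\emph{Claim $(P_j)$.} For every finite-dimensional Stein space $S$ of dimension $\leq n$ and every class in $H^{j'}(S,A)$ with $j'\geq j$ and $A$ a finite abelian group, the class is killed by some finite flat cover.

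For $j>n$ the claim is vacuous, since then $H^{j'}(S,A)=0$. Note that Theorem~\ref{thkill} for $A=\Z/m$ in degree $j'$ is an instance of $(P_{j'})$. A finite abelian group $A$ is a product of cyclic groups, so $H^{j'}(S,A)$ splits accordingly, and stacking covers reduces $(P_j)$ to cyclic coefficients $\Z/m'$.

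Now suppose $(P_{k+1})$ holds, i.e.\ all classes of degree $>k$ with finite coefficients can be killed on finite flat covers. We build the lift skeleton by skeleton. At each stage the obstruction class lies in $H^{i+1}(S,\pi_i(F_{k,m}))$ with $i+1>k$, so it is killed on some finite flat cover $\wS\to S$. However, vanishing of the obstruction after pullback only shows that the pulled-back partial lift extends after a homotopy and a change over the previous skeleton. This is precisely why we kill each obstruction on $\wS$ and continue there. Since there are at most $n$ stages, we obtain $f$ after finitely many covers.

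This handles degree $k$ with coefficients $\Z/m$, for all $k\geq 5$, given $(P_{k+1})$. The downward induction on $k$ starts at $k=n+1$, where everything vanishes, and proceeds down to $k=5$. At each step the required inputs are all the Moore-space maps $\Omega_{k',m'}$ for $k'>k$ together with the current $\Omega_{k,m}$. Since $(P_{k+1})$ is needed for all finite coefficient groups $\Z/m'$, the induction must be run simultaneously over all $m$.

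The main obstacle is the bookkeeping in the obstruction-theory step. Pulling back along a cover changes the space, and killing an obstruction class only guarantees that the partial lift can be modified over the previous skeleton, not extended as is. To make this rigorous, I would phrase each step via the Moore–Postnikov tower of $\varphi_{k,m}$. Its stages are principal fibrations with fibers $K(\pi_i(F_{k,m}),i)$, and each $k$-invariant produces a single class in $H^{i+1}(S,\pi_i(F_{k,m}))$. The lift through a given stage exists exactly when that class vanishes, and this can be arranged on a finite flat cover by $(P_{k+1})$, since $i+1\geq k+2$. Because the tower stabilizes in the range relevant to an $n$-dimensional CW complex, finitely many steps suffice. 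The remaining checks are routine: that finite flat covers stay within the Stein, finite-dimensional setting, and that the base change of $p_{k,m}$ along $f$ is indeed finite flat of degree $m$.
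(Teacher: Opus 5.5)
Your proposal is correct and follows essentially the same route as the paper. The paper also argues by downward induction on $k$ with $n=\dim S$ fixed and all $m$ handled at once, and kills the obstructions in $H^{i+1}(S,\pi_i(F_{k,m}))$ for $k<i<n$ (finite by Lemma~\ref{homotofi}) on finite flat covers via the induction hypothesis; it then makes the resulting map $S\to\Omega_{k,m}$ holomorphic by Theorem~\ref{thOka} and pulls back $p_{k,m}$ using Proposition~\ref{kill}. Your Moore--Postnikov phrasing is a clean way to make the paper's ``successive obstructions'' rigorous, since it avoids tracking skeleta across finite flat covers.
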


\begin{proof}
Let $n$ be the dimension of $S$. We henceforth fix $n$ and argue by downward induction on $k$. If $k>n$, then $H^k(S,\Z/m)=0$ because $S$ has the homotopy type of a CW complex of dimension $\leq n$ (see \cite[Korollar]{Hamm}), so the result is proven in this case. 
We assume from now on that $5\leq k\leq n$.

 Let $\varphi:S\to K(\Z/m,k)$ be a continuous map classifying~$\alpha\in H^k(S,\Z/m)$. Consider the problem of finding a continuous map $\psi:S\to \Omega_{k,m}$ such that~$\varphi_{k,m}\circ\psi$ is homotopic to $\varphi$. Obstruction theory provides successive obstructions for this problem, which live in $H^{i+1}(S,\pi_i(F_{k,m}))$ (see \cite[Theorem 34.2]{Steenrod}). These obstructions vanish for~${i\leq k}$ by Lemma \ref{homotofi} and for $i\geq n$ because $S$ has the homotopy type of a CW complex of dimension~$\leq n$ (see \cite[Korollar]{Hamm}).  As~$\pi_i(F_{k,m})$ is finite for~${k<i<n}$ (see Lemma~\ref{homotofi}), one can kill them after replacing $S$ with a finite flat cover of positive degree, by repeated applications of the induction hypothesis.

We may therefore assume that there exists a continuous map $\psi:S\to \Omega_{k,m}$ such that $\psi^*\alpha_{k,m}=\alpha$. As $\Omega_{k,m}$ is Oka by Proposition \ref{propOOka}, it follows from Theorem~\ref{thOka} that there exists a holomorphic map $\chi:S\to \Omega_{k,m}$ that is homotopic to $\psi$, and hence such that $\chi^*\alpha_{k,m}=\alpha$. By Proposition~\ref{kill}, letting $p:\wS\to S$ be the pullback by $\chi$ of the finite flat holomorphic map~$p_{k,m}:\wOmega_{k,m}\to\Omega_{k,m}$ concludes the proof.
\end{proof}

\subsection{Classes of degree \texorpdfstring{$3$}{3} or \texorpdfstring{$4$}{4}}
\label{par34}

\begin{prop}
\label{kill4}
Theorem \ref{thkill} holds for $k=4$. 
\end{prop}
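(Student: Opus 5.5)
We argue as in the proof of Proposition~\ref{kill5}, by downward induction on $k$, with $n=\dim S$ fixed, the case $k=4$ being reduced to integral classes and then handled by Oka theory again. Proposition~\ref{kill5} already gives Theorem~\ref{thkill} in all degrees $\geq 5$. Hence the obstructions to lifting the classifying map $\varphi:S\to K(\Z/m,4)$ of $\alpha$ through $\varphi_{4,m}:\Omega_{4,m}\to K(\Z/m,4)$ can be killed. These obstructions lie in $H^{i+1}(S,\pi_i(F_{4,m}))$, with $\pi_i(F_{4,m})$ finite and nonzero only for $i>4$, so they live in degrees $\geq 6$. We kill them by successive finite flat covers: a finite abelian coefficient group is filtered by groups $\Z/m'$, and composites of finite flat maps of positive degree are again finite flat of positive degree. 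We may therefore assume $\alpha=\psi^*\alpha_{4,m}$ for a continuous $\psi:S\to\Omega_{4,m}$. By Proposition~\ref{propOOka} and Theorem~\ref{thOka}, $\psi$ may be taken holomorphic. Pulling back $p_{4,m}:\wOmega_{4,m}\to\Omega_{4,m}$ and applying Proposition~\ref{liftentier}, we may further assume that $\alpha$ is the reduction modulo $m$ of an integral class $\gamma\in H^4(S,\Z)$.

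It remains to kill the reduction of an integral class $\gamma\in H^4(S,\Z)$ modulo $m$. It suffices to make $\gamma$ divisible by $m$ after a finite flat cover, in analogy with Lemma~\ref{lemkill2b}. We would again use an Oka target. We need a complex manifold $\Omega$, Oka, with $\pi_i(\Omega)=0$ for $0<i<4$ and $\pi_4(\Omega)=\Z$, so that it approximates $K(\Z,4)$ through degree $4$. A natural candidate is $\C^3\setminus\{0\}$ or a similar space; more precisely, we want something of the homotopy type of $\bS^4$, for instance $\C^N\setminus K$ with $K$ a polynomially convex embedded circle or point, which gives an odd sphere $\bS^{2N-1}$ or $\bS^{2N-2}$. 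Taking $\C^3\setminus \R$-line-type or $\C^3$ minus a polynomially convex circle yields a space homotopy equivalent to $\bS^4$, which is Oka by Kusakabe's theorem. The obstructions to realizing $\gamma$ as a pullback of the generator of $H^4(\bS^4,\Z)$ lie in $H^{i+1}(S,\pi_i(\bS^4))$ for $i\geq 5$. Here $\pi_i(\bS^4)$ is finite except for $\pi_7(\bS^4)\supset\Z$, and this infinite part is troublesome. To get around it, we first replace $\gamma$ by a suitable multiple $N\gamma$, which suffices up to enlarging $m$ (we want $m\gamma=$ divisible, so work with $mN$). Rationally $\bS^4\to K(\Z,4)$ has homotopy fiber whose only rational homotopy group is in degree $7$, the obstruction being $\gamma^2$ in $H^8$. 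Multiplying by integers and passing to finite covers, which kill the torsion obstructions by the induction hypothesis in degrees $\geq 5$, we would handle the finite parts. The $\Z$-part in $H^8(S,\Z)$ would be handled by a similar Oka trick or avoided using a different model.

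Once $\gamma=\chi^*u$ with $\chi:S\to\Omega\simeq\bS^4$ holomorphic, we pull back a finite flat self-map-type cover of $\Omega$ that multiplies $u$ by $m$. One such map is $(z_1,\dots,z_N)\mapsto(z_1,\dots,z_{N-1},z_N^m)$, with $K$ chosen in suitable position relative to a hyperplane, as in Lemma~\ref{lemCW}. The main obstacle is this last step together with the non-torsion obstruction in $\pi_7(\bS^4)$. I would first try replacing $\bS^4$ by a model whose nontorsion homotopy beyond degree $4$ is absent. An example would be an Oka complement whose homotopy type is that of a finite skeleton with rational type of $K(\Z,4)$ in the range $\leq n$. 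Failing that, I would try to arrange directly that the pullback along the degree-$m$ cover multiplies the generator by $m$, using a Borel--Moore/Alexander-duality computation as in Proposition~\ref{liftentier}.
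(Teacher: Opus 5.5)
Your first paragraph is correct and is exactly the paper's reduction: obstruction theory along $\varphi_{4,m}$ with the obstructions killed by Proposition~\ref{kill5}, Oka theory to make $\chi$ holomorphic, then Proposition~\ref{liftentier} to reduce to the case where $\alpha$ is the reduction of some $\gamma\in H^4(S,\Z)$.

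\textbf{The gap is the integral step.} The $\bS^4$ model (for instance $\C^3$ minus a polynomially convex circle) cannot work in general, and this is not a technicality. Suppose $N p^*\gamma=\chi^*u$ on a finite flat cover $p:\wS\to S$ of degree $d\geq 1$, with $N\geq 1$ and $\chi:\wS\to\bS^4$. Then $N^2p^*(\gamma^2)=\chi^*(u^2)=0$, so $p^*(\gamma^2)$ is torsion. A finite flat map of degree $d$ is a $d$-fold ramified covering, so it has a transfer with $p_*p^*=d$. Hence $\gamma^2$ must already be torsion in $H^8(S,\Z)$.

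This is exactly the non-torsion obstruction coming from $\pi_7(\bS^4)\otimes\Q\neq 0$. Passing to multiples does not remove it, and neither does your induction hypothesis, which only kills classes with finite coefficients. Concretely, let $Q=\{x_0^2+\dots+x_4^2=1\}\subset\C^5$. Then $S=Q\times Q$ is homotopy equivalent to $\bS^4\times\bS^4$, and $\gamma=u_1+u_2$ has $\gamma^2=2u_1u_2\neq 0$. So no nonzero multiple of $\gamma$ becomes a pullback from $\bS^4$ on any finite flat cover. Your fallback, an Oka manifold rationally modelling $K(\Z,4)$ in a range, is precisely what is not available: the paper explicitly does not know how to build Oka models of Eilenberg--MacLane spaces. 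So the proposal, as it stands, does not prove the proposition.

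\textbf{The missing idea is to replace the Oka target by vector bundles.} Take the map $\eta:\BSL_2(\C)\to K(\Z,4)$ classifying $c_2$. It is a rational equivalence and an isomorphism on $\pi_i$ for $i\leq 4$. Therefore every obstruction to lifting the classifying map of $\gamma$ along $\eta$ has finite coefficients, and can be killed on finite flat covers by Proposition~\ref{kill5}. This gives $\gamma=c_2(\cE)$ for a rank-$2$ topological bundle $\cE$. Grauert's Oka principle makes $\cE$ holomorphic (use Proposition~\ref{proplift} if $S$ is nonreduced).

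No self-cover multiplying a generator by $m$ is then needed. Lemma~\ref{lemBertini} gives a finite flat $T\subset\P(\cE)$ over $S$ on which the pullback of $\cE$ is an extension of line bundles $\cL,\cL'$. Hence $\alpha$ pulled back to $T$ is the cup product of the reductions of $c_1(\cL|_T)$ and $c_1(\cL'|_T)$. Killing one factor by the case $k=2$ (Proposition~\ref{kill12}) kills $\alpha$.
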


\begin{proof}
Fix $\alpha\in H^4(S,\Z/m)$. Arguing as in the proof of Proposition \ref{kill5} (and using Proposition \ref{kill5} to kill the relevant obstructions on finite flat covers of~$S$), we reduce to the case where there exists a holomorphic map $\chi:S\to\Omega_{4,m}$ such that~$\chi^*\alpha_{4,m}=\alpha$. By Proposition \ref{liftentier}, replacing $S$ with the pullback by $\chi$ of the finite flat holomorphic map~$p_{4,m}:\wOmega_{4,m}\to\Omega_{4,m}$ allows us to assume that $\alpha$ is the reduction modulo $m$ of a class~$\gamma\in H^4(S,\Z)$.

Let $\eta:\BSL_2(\C)\to K(\Z,4)$ be a continuous map classifying the second Chern class~$c_2$. As $H^*(\BSL_2(\C),\Z)=\Z[c_2]$, and given the computation of $H^*(K(\Z,4),\Q)$ in \cite[Proposition 4 p.\,501]{Serrehomologie}, the map $\eta$ induces an isomorphism on $H^*(-,\Q)$, hence on $H_*(-,\Q)$, hence also on~$\pi_*(-)\otimes{\Q}$ by \cite[Th\'eor\`eme 3]{Serrehomotopie}. In addition, by the Hurewicz theorem, the map $\eta$ induces an isomorphism on $\pi_i(-)$ for $i\leq 4$. Letting~$F_{\eta}$ be the homotopy fiber of $\eta$, the long exact sequence of homotopy groups of a fibration shows that~$\pi_i(F_{\eta})=0$ for~$1\leq i\leq 4$ and that $\pi_i(F_{\eta})$ is finite for~$i>4$. 

Let $\varphi:S\to K(\Z,4)$ be a continuous map classifying $\gamma$. The successive obstructions to lifting~$\varphi$ to $\BSL_2(\C)$ (see \cite[Theorem 34.2]{Steenrod}) live in~$H^{i+1}(S,\pi_i(F_{\eta}))$ for~$i\geq 5$, where~$\pi_i(F_{\eta})$ is finite. By Proposition \ref{kill5}, one can kill these obstructions on finite flat covers of~$S$. This allows us to reduce to the case where~$\gamma=c_2(\cE)$ for some rank~$2$ topological complex vector bundle $\cE$ on~$S$. Use Grauert's Oka principle \cite[Satz~II]{GrauertOka} to endow~$\cE$ with the structure of a holomorphic vector bundle (the reference \cite{GrauertOka} assumes $S$ to be reduced, but one can reduce to this case by applying Proposition \ref{proplift} to a trivial fibration whose fiber is a Grassmannian).

Let $\pi:\P(\cE)\to S$ be the projectivization of $\cE$. Use Lemma \ref{lemBertini} to find a complex subspace $T\subset \P(\cE)$ such that $\pi|_{T}:T\to S$ is finite flat of some positive degree. The vector bundle $\pi^*\cE$ is tautologically an extension of two holomorphic line bundles~$\cL$ and $\cL'$ on $\P(\cE)$. It follows that $(\pi|_T)^*c_2(\cE)=c_1(\cL|_T)c_1(\cL'|_T)$ in~$H^4(T,\Z)$. Replacing $S$ with $T$ and $\cE$ with $(\pi|_T)^*\cE$, we may assume that $\gamma$ is the product of two classes in $H^2(T,\Z)$, hence that $\alpha$ is the product of two classes in $H^2(T,\Z/m)$. The proposition now follows from the $k=2$ case dealt with in Proposition \ref{kill12}.
\end{proof}

\begin{prop}
\label{kill3}
Theorem \ref{thkill} holds for $k=3$. 
\end{prop}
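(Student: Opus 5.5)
Fix $\alpha\in H^3(S,\Z/m)$ and proceed as in the proof of Proposition \ref{kill4}, now using the Oka manifold $\Omega_{3,m}$. By Lemma \ref{homotofi}, the obstructions to finding a continuous $\psi:S\to\Omega_{3,m}$ with $\varphi_{3,m}\circ\psi\simeq\varphi$ live in $H^{i+1}(S,\pi_i(F_{3,m}))$ with $i\geq 4$ and finite coefficients. Their degrees are therefore $\geq 5$, and Proposition \ref{kill5} kills them on finite flat covers of positive degree; composites and base changes of finite flat maps remain finite flat. Theorem \ref{thOka} then makes $\psi$ holomorphic. Pulling back $p_{3,m}:\wOmega_{3,m}\to\Omega_{3,m}$ along $\chi$ and applying Proposition \ref{liftentier}, we may assume that $\alpha$ is the reduction modulo $m$ of a class $\gamma\in H^3(S,\Z)$.

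It remains to kill an integral class $\gamma\in H^3(S,\Z)$ modulo $m$ on a finite flat cover. I would imitate the $\BSL_2(\C)$ trick with a complex Lie group $G$ whose classifying space rationally models $K(\Z,3)$. The natural choice is not a classifying space but the group itself: $\SL_2(\C)\simeq \mathrm{SU}(2)\simeq\bS^3$ is Oka (complex Lie groups are Oka). Let $\eta:\SL_2(\C)\to K(\Z,3)$ classify a generator of $H^3(\bS^3,\Z)$. It induces an isomorphism on rational cohomology, since $H^*(K(\Z,3),\Q)$ is an exterior algebra on one degree-$3$ generator. It also induces an isomorphism on $\pi_i$ for $i\leq 3$ by Hurewicz. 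Its homotopy fiber $F_\eta$ therefore has $\pi_i(F_\eta)=0$ for $i\leq 3$, and $\pi_i(F_\eta)$ finite for $i\geq 4$ by Serre. The obstructions to lifting $\gamma$ to a map $S\to\SL_2(\C)$ lie in $H^{i+1}(S,\pi_i(F_\eta))$ with $i\geq4$. These are again killed by Proposition \ref{kill5}, which applies because their degrees are $\geq5$. Theorem \ref{thOka} then gives a holomorphic $\chi:S\to\SL_2(\C)$ with $\gamma=\chi^*u$, where $u$ is the generator.

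So it suffices to kill $u$ modulo $m$ on a finite flat cover of $\SL_2(\C)$, or on one of some Oka target through which $u$ factors. This is the step I expect to be hardest. A finite flat map of degree $d$ multiplies $u$ via transfer, so it cannot simply kill it. Instead I would try to factor $u$ as a product of lower-degree classes after a finite flat cover, as in the $c_2$ argument, and then invoke Proposition \ref{kill12}.

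One concrete attempt uses the Brauer-type description. On $S$, the class $\gamma$ times a unit $c_1$-type class... A cleaner alternative runs as follows. Write $\gamma$ as a Bockstein: the class $\alpha$ itself satisfies $\alpha=\bar\gamma$. Pass to $S\times\C^*$, or use a finite flat cover trivializing a degree-$1$ class $\delta$, so that $\alpha=\delta\cup\epsilon$ with $\delta\in H^1$ and $\epsilon\in H^2$; then Proposition \ref{kill12} applied to $\delta$ finishes the proof. To realize such a cup decomposition, I would replace $\SL_2(\C)$ by the Oka manifold $\C^*\times \C^2\setminus\{0\}$ rather than $\SL_2(\C)$, or by $\C^*\times\P^1$. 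On this manifold $H^3$ is generated by $c\cup h$ with $c\in H^1$ and $h\in H^2$, and a map $\bS^3\to$ it realizing $u$ must be arranged up to finite obstructions. If the factorization fails homotopically, I would instead try to use Lemma \ref{lemkill2a} on the $m$-torsion image of $\gamma$ in a suitable cover, combining it with a $\Z/m$-cover of $\SL_2(\C)$-type fibration, e.g. the map $\SL_2(\C)\to\SL_2(\C)/\mu_m$ realized via a holomorphic $\mu_m$-action.
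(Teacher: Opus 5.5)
Your first two steps are correct and follow the paper. You first reduce, via $\Omega_{3,m}$, Proposition~\ref{kill5} and Proposition~\ref{liftentier}, to $\alpha=\gamma \bmod m$ with $\gamma\in H^3(S,\Z)$. You then lift $\gamma$ holomorphically to an Oka model of $\bS^3$: you use $\mathrm{SL}_2(\C)$ where the paper uses $\Omega=\C^2\setminus\{(0,0)\}$, and both work since the obstructions sit in degrees $\geq 5$ with finite coefficients. The gap is that the proof stops exactly at the decisive step. You never exhibit a finite flat cover on which $u$ becomes divisible by $m$, and your reason for thinking this is hard is mistaken. The identity $p_*p^*=d$ only prevents $p^*u$ from vanishing in $H^3(-,\Z)$. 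It is perfectly compatible with $p^*u\in mH^3(-,\Z)$, and that is all you need for $p^*(u \bmod m)=0$.

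The missing idea is the self-map $q_m\colon(z_1,z_2)\mapsto(z_1,z_2^m)$ of $\Omega=\C^2\setminus\{(0,0)\}$. It is finite flat of degree $m$, being the restriction of a finite flat map $\C^2\to\C^2$ with $q_m^{-1}(0)=\{0\}$. It also has topological degree $m$ on the $3$-sphere, so $q_m^*\delta=m\delta$ for the generator $\delta\in H^3(\Omega,\Z)$. Pulling $q_m$ back along a holomorphic $\chi\colon S\to\Omega$ with $\chi^*\delta=\gamma$ gives a finite flat $p\colon\wS\to S$ of degree $m$ with $p^*\gamma\in mH^3(\wS,\Z)$, hence $p^*\alpha=0$. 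With your target, you can reach this by composing with the first-column map $\mathrm{SL}_2(\C)\to\Omega$, which is a homotopy equivalence. Note that the $m$-th power map of $\mathrm{SL}_2(\C)$, although of degree $m$ on $\mathrm{SU}(2)$, is not finite, so it cannot play this role.

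The fallbacks you list do not fill the gap. The manifold $\C^\times\times\Omega$ has $H^2=0$, so its $H^3$ is not spanned by a cup product $c\cup h$. On $\C^\times\times\P^1$, the class $c\cup h$ pulls back to $0$ on any space with $H^1=0$ (for instance $S=\mathrm{SL}_2(\C)$ itself), so $u$ cannot factor through it. The projection $S\times\C^\times\to S$ is not finite. Finally, lifting $\gamma$ to the lens-space quotient $\mathrm{SL}_2(\C)/\mu_m$ produces a homotopy fiber over $K(\Z,3)$ with $\pi_2\simeq\Z/m$. That yields degree-$3$ obstructions with $\Z/m$ coefficients, which brings you back to the very problem being solved.
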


\begin{proof}
Fix $\alpha\in H^3(S,\Z/m)$. Arguing as in the first paragraph of the proof of Proposition \ref{kill4}, we may assume that $\alpha$ is the reduction modulo $m$ of a class~${\gamma\in H^3(S,\Z)}$.

Set $\Omega:=\C^2\setminus\{(0,0)\}$, so $\Omega$ has the homotopy type of $\bS^3$. Let $\theta:\Omega\to K(\Z,3)$ be a continuous map classifying a generator $\delta\in H^3(\Omega,\Z)\simeq \Z$. In view of \cite[Proposition 4 p.\,501]{Serrehomologie}, the map $\theta$ induces an isomorphism on~$H^*(-,\Q)$, hence on~$H_*(-,\Q)$, hence on~$\pi_*(-)\otimes{\Q}$ by \cite[Th\'eor\`eme 3]{Serrehomotopie}. In addition, by the Hurewicz theorem, the map $\theta$ induces an isomorphism on $\pi_i(-)$ for $i\leq 3$. Letting~$F_{\theta}$ be the homotopy fiber of $\theta$, the long exact sequence of homotopy groups of a fibration shows that~$\pi_i(F_{\theta})=0$ for~$1\leq i\leq 3$ and that $\pi_i(F_{\theta})$ is finite for~$i>3$. 

Let $\varphi:S\to K(\Z,3)$ be a continuous map classifying $\gamma$. The successive obstructions to lifting~$\varphi$ to $\Omega$ (see \cite[Theorem 34.2]{Steenrod}) live in~$H^{i+1}(S,\pi_i(F_{\theta}))$ for~$i\geq 4$, where $\pi_i(F_{\theta})$ is finite. Killing these obstructions on finite flat covers of $S$ using Proposition \ref{kill5}, we may assume that there exists a continuous map~$\psi:S\to \Omega$ with~$\psi^*\delta=\gamma$. As $\Omega$ is Oka (see \eg \cite[Proposition 5.6.1]{Forstneric}), one can apply Theorem \ref{thOka} to find a holomorphic map $\chi:S\to \Omega$ that is homotopic to $\psi$, so~$\chi^*\delta=\gamma$.

Let $q_m:\Omega\to\Omega$ be the  holomorphic map~$(z_1,z_2)\mapsto(z_1,z_2^m)$, which is finite flat of degree $m$. One has $(q_m)^*\delta=m\delta$. Let $p:\wS\to S$ be the pullback of~$q_m$ by $\chi$. Then $p^*\gamma\in H^3(\wS,\Z)$ is a multiple of $m$ and hence $p^*\alpha=0$.
\end{proof}

\section{The comparison theorem}
\label{seccompa}

After providing the necessary background on analytification and Grothendieck topologies in Stein geometry in \S\ref{paranal} and \S\ref{parGroth}, we use Theorem \ref{thkill} to prove our key comparison theorem (Theorem \ref{thcompaS}) in \S\ref{parcompaS}. We deduce its constructible and $\Gal(\C/\R)$\nobreakdash-equivariant extensions in \S\ref{parconstructible} and \S\ref{parGeq} respectively. The construction of the analytic weight filtration is derived in \S\ref{parGS}.

\subsection{Analytification in Stein geometry}
\label{paranal}

Let $S$ be a Stein space and let $X$ be a separated
%for X^an to be separated.
$\cO(S)$\nobreakdash-scheme locally of finite presentation. In \cite[Satz 1.1]{Bingener},  Bingener functorially associates with $X$ its \textit{analytification}: a complex space $X^{\an}$ over $S$ endowed with a morphism ${i_X:X^{\an}\to X}$. Concretely, if ${X=\{f_1=\dots=f_M=0\}\subset\A^N_{\cO(S)}}$ is affine, then $X^{\an}=\{f_1=\dots=f_M=0\}\subset S\times \C^N$. In general, the construction proceeds by gluing analytifications of affine charts. The space $X^{\an}$ is~naturally in bijection with $X(\C)$, where $X$ is viewed as a $\C$-scheme (see \cite[Lemma~2.3]{Steinsurface}).
Analytification is compatible with fiber products (see \cite[p.\,3]{Bingener}) and restriction of scalars (see \cite[Lemma 2.2]{Steinsurface}). The \textit{analytification} of a finitely presented quasi-coherent sheaf $\cF$ on $X$ is the coherent sheaf~$\cF^{\an}:=i_X^*\cF$ on~$X^{\an}$ (see~\mbox{\cite[p.\,3]{Bingener}}). 

Let $S$ be a Stein space. A coherent sheaf $\cG$ on $S$ is said to be \textit{globally finitely presented} if it admits a presentation of the form $\cO_S^{\oplus N'}\to\cO_S^{\oplus N}\to\cG\to 0$ for some~$N,N'\geq 0$ (we warn the reader that the reference \cite[\S 2.3]{Steinsurface} calls such sheaves \textit{finitely presented}). A finite holomorphic map $p:T\to S$ is said to be \textit{globally finitely presented} if the coherent sheaf $p_*\cO_T$ is globally finitely presented.

The next proposition, in the spirit of \cite{Forster}, follows from \cite[Proposition 2.5 and Remarks~2.6]{Steinsurface} (see~\mbox{\cite[(2.2)]{Brauerstein}}). It will be used in the proof of Proposition~\ref{propconiveauuptomultiple}.

\begin{prop}
\label{propmonoidaleq}
Let $S$ be a Stein space. There is an exact monoidal adjoint equivalence of categories
\begin{equation}
\label{monoidaleq}
 \left\{  \begin{array}{l}
    \textrm{finitely presented quasi-coherent}\\ \hspace{2em}\textrm{sheaves on } \Spec(\cO(S))
  \end{array}\right\}
    \stackrel[]{}{\rightleftarrows} 
 \left\{  \begin{array}{l}
    \textrm{globally finitely presented}\\ \hspace{.7em}\textrm{coherent sheaves on }S
  \end{array}\right\}
\end{equation}
with left-to-right functor $\cF\mapsto\cF^{\an}$ and right-to-left functor $\cG\mapsto \widetilde{\cG(S)}$.
If $S$ is finite-dimensional, then \eqref{monoidaleq} restricts to a rank-preserving equivalence between the categories of vector bundles on $\Spec(\cO(S))$ and holomorphic vector bundles on $S$.
\end{prop}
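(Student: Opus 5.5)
The plan is to deduce the proposition from \cite[Proposition 2.5 and Remarks~2.6]{Steinsurface}, which supply the equivalence \eqref{monoidaleq}, and to verify the additional properties separately. First, the two functors. For a finitely presented quasi-coherent sheaf $\cF$ on $\Spec(\cO(S))$ with presentation $\cO^{\oplus N'}\to\cO^{\oplus N}\to\cF\to 0$, the sheaf $\cF^{\an}=i^*\cF$ has the presentation $\cO_S^{\oplus N'}\to\cO_S^{\oplus N}\to\cF^{\an}\to 0$ given by the same matrix, so it is globally finitely presented. Conversely, for a globally finitely presented coherent sheaf $\cG$, right exactness of $\Gamma(S,-)$ on coherent sheaves (Cartan's Theorem B) shows that $\cG(S)$ is a finitely presented $\cO(S)$-module. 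The unit $\cF\to\widetilde{\cF^{\an}(S)}$ and the counit $(\widetilde{\cG(S)})^{\an}\to\cG$ are isomorphisms on free sheaves of finite rank, and the five lemma applied to the presentations then shows that they are isomorphisms in general.

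Second, monoidality and exactness. Compatibility with tensor products holds because $i^*$ is monoidal. Exactness amounts to flatness of $\cO_{S,s}$ over $\cO(S)$, or equivalently, via the equivalence, to exactness of $\Gamma(S,-)$ on coherent sheaves, which is again Theorem B. Both facts are among the inputs of \cite{Steinsurface}, and I will cite them from there.

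Third, and this is the main obstacle, the statement about vector bundles when $S$ is finite-dimensional. A vector bundle on $\Spec(\cO(S))$ is a finitely generated projective module, so it is a direct summand of a free module, and its analytification is a locally free sheaf whose rank equals the rank at the points of $S$. The converse is the delicate direction: I must show that every holomorphic vector bundle $\cE$ of rank $r$ on $S$ is globally finitely presented. For this I will use that, for $\dim S=n$, the bundle $\cE$ is generated by finitely many global sections. Such a bound can be obtained as in Kripke's theorem \cite{Kripke}, or by mapping $S$ holomorphically to a Grassmannian via Oka theory. This gives a surjection $\cO_S^{\oplus N}\to\cE$, and its kernel is again a vector bundle of bounded rank. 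Applying the same argument to the kernel, or using Theorem B to split the surjection, exhibits $\cE$ as a direct summand of $\cO_S^{\oplus N}$, so $\cE(S)$ is projective and finitely generated. Rank preservation can be checked at the closed points of $\Spec(\cO(S))$ corresponding to points of $S$. It extends to all of $\Spec(\cO(S))$ by local constancy of the rank, provided the points of $S$ meet every connected component of $\Spec(\cO(S))$; this holds because idempotents of $\cO(S)$ correspond to clopen subsets of $S$.
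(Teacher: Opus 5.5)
Your proposal is correct and follows the paper, which obtains the proposition by citing \cite[Proposition 2.5 and Remarks~2.6]{Steinsurface} (see \cite[(2.2)]{Brauerstein}); your own checks are the standard arguments behind that citation, namely Theorem B with the five lemma for the unit and counit, and, for the vector bundle part, finite generation of bundles in finite dimension followed by a splitting. Two small fixes. First, injectivity of $\cF'^{\an}\to\cF^{\an}$ also needs Theorem A: its kernel is a coherent sheaf with no global sections, hence zero. Second, in the rank argument you should say that points of $S$ meet every nonempty clopen subset of $\Spec(\cO(S))$, not every connected component; the latter fails when $S$ has infinitely many connected components, since for an infinite discrete $S$ the non-principal ultrafilters give connected components of $\Spec(\cO(S))$ containing no point of $S$.
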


The following proposition is used in the proof of Theorem \ref{thcompaS}.

\begin{prop}
\label{propadjeq}
If $S$ is a Stein space, then the two functors $X\mapsto X^{\an}$ and ${T\mapsto\Spec(\cO(T))}$ induce an adjoint equivalence
\begin{equation}
\label{adjeq}
 \left\{  \begin{array}{l}
    \textrm{finite }\cO(S)\textrm{-schemes }X\\ \hspace{.3em}\textrm{of finite presentation}
  \end{array}\right\}
    \rightleftarrows
 \left\{  \begin{array}{l}
    \hspace{2em}\textrm{globally finitely presented}\\ \textrm{finite holomorphic maps } p:T\to S
  \end{array}\right\}.
\end{equation}

If the $\cO(S)$\nobreakdash-scheme~$X$ is flat (\resp surjective) then so is $p$.

If $p$ is flat (\resp flat and surjective) then so is the $\cO(S)$-scheme $X$.
\end{prop}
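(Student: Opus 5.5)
The plan is to construct the adjunction directly, then check that it is an equivalence using Proposition~\ref{propmonoidaleq} and the fact that a finite morphism is determined by its pushforward algebra.

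\emph{Identifying both categories with algebras.} A finite $\cO(S)$-scheme $X$ of finite presentation is $\Spec(A)$, where $A$ is an $\cO(S)$-algebra that is finitely presented as an $\cO(S)$-module. Here I use that a finite algebra of finite presentation is finitely presented as a module; this can be checked directly by writing $A$ as a quotient of a polynomial ring by finitely many relations together with monic integral equations. Such $X$ therefore correspond to commutative algebra objects in the category of finitely presented quasi-coherent sheaves on $\Spec(\cO(S))$.

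On the analytic side, a finite holomorphic map $p:T\to S$ is recovered from the coherent $\cO_S$-algebra $p_*\cO_T$ via the analytic $\mathrm{Specan}$. Globally finitely presented finite maps thus correspond to commutative algebra objects in the category of globally finitely presented coherent sheaves.

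Since the equivalence \eqref{monoidaleq} is monoidal, it induces an equivalence between these two categories of commutative algebras. It remains to check that the two functors agree with $X\mapsto X^{\an}$ and $T\mapsto\Spec(\cO(T))$:
\begin{itemize}
\item For $X=\Spec(A)$, Bingener's construction gives $(p_X)_*\cO_{X^{\an}}=A^{\an}=\widetilde{A}^{\an}$. This can be checked on a presentation $X\subset\A^N_{\cO(S)}$, using that analytification commutes with fiber products.
\item For $T$, one has $\cO(T)=(p_*\cO_T)(S)$.
\end{itemize}
The unit and counit of the adjunction then come from those of \eqref{monoidaleq}.

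\emph{Flatness and surjectivity, from left to right.} Suppose $X$ is flat. Then $A$ is a finitely presented flat $\cO(S)$-module, hence projective, hence a direct summand of $\cO(S)^{\oplus N}$. Applying the exact functor $(-)^{\an}$ shows that $p_*\cO_T$ is a direct summand of $\cO_S^{\oplus N}$, hence locally free, so $p$ is flat. For surjectivity, take $s\in S$ with maximal ideal $\mathfrak m_s\subset\cO(S)$. The fiber $p^{-1}(s)$ is $\mathrm{Specan}$ of $A\otimes_{\cO(S)}\cO(S)/\mathfrak m_s$, which is nonzero if $X$ is surjective.

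\emph{Flatness and surjectivity, from right to left.} Suppose $p$ is flat. Then $p_*\cO_T$ is a vector bundle that is globally finitely presented, so $A=(p_*\cO_T)(S)$ is a finitely presented module. The main obstacle is showing that $A$ is projective over $\cO(S)$. A globally finitely presented vector bundle $\cG$ is a quotient $\cO_S^{\oplus N}\to\cG$, and this quotient splits because $\mathrm{Ext}^1$ between vector bundles on a Stein space vanishes. Thus $\cG$ is a direct summand of a trivial bundle, and taking global sections shows $A$ is a direct summand of $\cO(S)^{\oplus N}$, hence projective and flat. No finite-dimensionality hypothesis is needed here.

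Surjectivity of $X$ is the delicate point, because $\Spec(\cO(S))$ has many primes not of the form $\mathfrak m_s$. The plan is to use that $A$ is finite projective. Its rank is locally constant on $\Spec(\cO(S))$, and a rank function of this kind is determined by an idempotent decomposition of $\cO(S)$, i.e.\ by open-closed pieces of $S$. Surjectivity of $p$ forces the rank to be positive on every connected component of $S$, hence on every such piece. So $A$ has everywhere positive rank, and $X\to\Spec(\cO(S))$ is surjective.
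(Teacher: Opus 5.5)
Your proof is correct, but it follows a more self-contained route than the paper's.

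\textbf{What the paper does.} It quotes the equivalence and both flatness implications from \cite[Proposition 2.7]{Steinsurface}. It gets surjectivity of $p$ from Bingener's general theorem that analytification preserves surjectivity of morphisms of finite presentation \cite[Satz 3.1 (2)]{Bingener}. For the converse, it observes that the image $Z$ of the finite flat finitely presented morphism $\Spec(\cO(T))\to\Spec(\cO(S))$ is open and closed, and that $p$ factors through $Z^{\an}$, so $Z^{\an}=S$. It then invokes \eqref{adjeq} to conclude $Z=\Spec(\cO(S))$.

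\textbf{What you do differently.} You rebuild the equivalence from Proposition~\ref{propmonoidaleq} by passing to commutative algebra objects and $\mathrm{Specan}$. You obtain flatness in both directions through projectivity, with Theorem~B splitting $\cO_S^{\oplus N}\to p_*\cO_T$. You obtain surjectivity of $p$ from the fibre computation $p^{-1}(s)=\mathrm{Specan}(A/\mathfrak m_sA)$; this is elementary but only covers finite morphisms, whereas Bingener's result is general.

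Your rank-and-idempotent argument for the last assertion is the paper's argument in different language. The positive-rank locus is exactly the clopen image $Z$. Your bijection between idempotents of $\cO(S)$ and open-closed subsets of $S$ plays the role of the paper's appeal to \eqref{adjeq}. Both arguments show why flatness is needed, in line with Remark~\ref{remnonsurj}.

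\textbf{One thin step.} You do not really justify $(p_X)_*\cO_{X^{\an}}\cong\widetilde{A}^{\an}$. Compatibility with fibre products only reduces it to $\cO(S)[x]/(g)$ with $g$ monic, and that case still needs Weierstrass division. A cleaner route is Yoneda: both $X^{\an}$ and $\mathrm{Specan}_S(\widetilde{A}^{\an})$ represent the functor $Y\mapsto\mathrm{Hom}_{\cO(S)\textrm{-alg}}(A,\cO(Y))$ on complex spaces over $S$.

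\textbf{Trade-off.} Your route depends only on Proposition~\ref{propmonoidaleq}, Theorem~B and standard commutative algebra, and it makes the role of projectivity explicit. The paper's route buys brevity and a surjectivity statement that is not tied to finiteness.
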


\begin{proof}
This is proved in \cite[Proposition 2.7]{Steinsurface}, except for the parenthesized assertions. If $f:X\to\Spec(\cO(S))$ is a surjective morphism of finite presentation, then~$f^{\an}:X^{\an}\to S$ is surjective by \cite[Satz 3.1 (2)]{Bingener}. Now let $p:T\to S$ be a surjective globally finitely presented finite flat holomorphic map with induced finite flat morphism of finite presentation $f:\Spec(\cO(T))\to\Spec(\cO(S))$. The image~$Z$ of~$f$ is a closed and open subscheme of $\Spec(\cO(S))$. As the surjective map $p$ factorizes through the open subspace $Z^{\an}$ of $S$, one has $Z^{\an}=S$. It therefore follows from the equivalence~\eqref{adjeq} that $Z=\Spec(\cO(S))$ and hence that $f$ is surjective.
\end{proof}

\begin{rem}
\label{remnonsurj}
In the setting of Proposition \ref{propadjeq}, if $p$ is not assumed to be flat, the surjectivity of $p$ does not necessarily imply the surjectivity of $f$. Here is an example. For $n\geq 1$, let $S_n$ be the zero-dimensional Stein space with~${\cO(S_n)=\C[\varepsilon]/(\varepsilon^n)}$. Let~$S$ be the disjoint union of the $(S_n)_{n\geq 1}$. Let $p:T:=S^{\red}\hookrightarrow S$ be the reduction of~$S$. As~$T$ is the zero locus of $\varepsilon$ in $S$, the surjective finite holomorphic map $p$ is globally finitely presented. Since the element $\varepsilon\in \cO(S)=\prod_{n\geq 1}\C[\varepsilon]/(\varepsilon^n)$ is not nilpotent, it does not vanish on all points of $\Spec(\cO(S))$. It follows that the closed immersion~$f:\Spec(\cO(T))=\Spec(\cO(S)/\varepsilon)\to\Spec(\cO(S))$ is not surjective.
\end{rem}

Let $S$ be a Stein space. To address the difficulty mentioned in Remark \ref{remnonsurj}, we say that a finite holomorphic map $p:T\to S$ is \textit{algebraically surjective} if the induced scheme morphism $\Spec(\cO(T))\to\Spec(\cO(S))$ is surjective.

Finally, the next lemma gives criteria for a finite holomorphic map between Stein spaces to be globally finitely presented.

\begin{lem}
\label{lemgfp}
Let $S$ be a Stein space.
\begin{enumerate}[label=(\roman*)] 
\item
\label{gfpi}
 If two finite holomorphic maps $p:T\to S$ and $p':T'\to S$ are globally finitely presented, then so is their fiber product $(p,p'):T\times_{S}T'\to S$.
\item
\label{gfpii}
If two finite holomorphic maps $p:T\to S$ and $q:T'\to T$ are globally finitely presented, then so is their composition $p\circ q$.
\item 
\label{gfpiii}
If $S$ is finite-dimensional, then any holomorphic map $p:T\to S$ that is finite flat of some degree $d\geq 1$ is globally finitely presented.
\end{enumerate}
\end{lem}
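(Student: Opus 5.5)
We prove the three assertions separately. For \ref{gfpi} and \ref{gfpii} the plan is to move everything to the algebraic side using Proposition~\ref{propmonoidaleq}.

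\textbf{Part \ref{gfpi}.} First note that $p_*\cO_T$ and $p'_*\cO_{T'}$ are globally finitely presented coherent sheaves on $S$. Let $\cF,\cF'$ be the finitely presented quasi-coherent sheaves on $\Spec(\cO(S))$ corresponding to them under \eqref{monoidaleq}. The sheaf of algebras of the fiber product is
$$(p,p')_*\cO_{T\times_ST'}=p_*\cO_T\otimes_{\cO_S}p'_*\cO_{T'}.$$
Since \eqref{monoidaleq} is monoidal, this tensor product is $(\cF\otimes\cF')^{\an}$, which is globally finitely presented.

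\textbf{Part \ref{gfpii}.} Write $q_*\cO_{T'}$ as the cokernel of $\cO_T^{\oplus N'}\to\cO_T^{\oplus N}$. The functor $p_*$ is exact because $p$ is finite, so $(p\circ q)_*\cO_{T'}$ is the cokernel of
$$(p_*\cO_T)^{\oplus N'}\to(p_*\cO_T)^{\oplus N}.$$
This is a morphism between globally finitely presented sheaves. By the equivalence \eqref{monoidaleq}, it comes from a morphism of finitely presented quasi-coherent sheaves. Exactness of the equivalence then shows that its cokernel is the analytification of a finitely presented cokernel, hence globally finitely presented.

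\textbf{Part \ref{gfpiii}.} Here $p$ is finite flat of degree $d$, so $p_*\cO_T$ is a rank-$d$ holomorphic vector bundle on $S$. The statement then reduces to showing that any holomorphic vector bundle $\cE$ on a finite-dimensional Stein space is globally finitely presented. By the last assertion of Proposition~\ref{propmonoidaleq}, $\cE$ corresponds to an algebraic vector bundle; but that assertion is stated within the equivalence, so we should verify it directly to avoid circularity. The plan is:
\begin{enumerate}[label=(\alph*)]
\item Since $S$ has dimension $n$ and $\cE$ has rank $d$, use a Kripke-type result (as in \cite[Theorem~1]{Kripke}, or Forster's bound) to see that $\cE$ is generated by finitely many global sections. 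This gives a surjection $\cO_S^{\oplus N}\to\cE$.
\item The kernel $\cE'$ of this surjection is again a vector bundle, of rank $N-d$, because the surjection is onto a locally free sheaf.
\item Apply (a) to $\cE'$ to get a surjection $\cO_S^{\oplus N'}\to\cE'$, and compose to obtain a presentation
$$\cO_S^{\oplus N'}\to\cO_S^{\oplus N}\to\cE\to 0.$$
\end{enumerate}

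The main obstacle is step (a): finite generation by global sections uses finite-dimensionality in an essential way. Alternatively, one could invoke the vector-bundle part of Proposition~\ref{propmonoidaleq} directly, if one accepts that it already asserts every holomorphic vector bundle lies in the right-hand category.
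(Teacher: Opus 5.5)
Your proposal is correct.

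\textbf{Part (iii).} This is essentially the paper's argument. The paper observes that $p_*\cO_T$ is locally free of rank $d$ and invokes the vector-bundle clause of Proposition~\ref{propmonoidaleq}. There is no circularity in doing so: that proposition is imported from \cite{Steinsurface} and established before, and independently of, this lemma. Your Kripke-based argument is a valid, self-contained reproof of exactly the piece being used. It runs: finite generation by global sections, then local splitting of a surjection onto a locally free sheaf so that the kernel is again a vector bundle, then repeat. It is more self-contained but not needed.

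\textbf{Parts (i) and (ii).} Here your route differs mildly from the paper's. The paper works with the scheme-level equivalence \eqref{adjeq} of Proposition~\ref{propadjeq}. There, fiber products and composites of finite finitely presented $\cO(S)$-schemes are obviously finite and finitely presented, and one transfers back using that analytification commutes with fiber products and restriction of scalars.

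You stay at the level of coherent sheaves and use \eqref{monoidaleq}:
\begin{enumerate}[label=(\alph*)]
\item for (i), the identification $(p,p')_*\cO_{T\times_ST'}\cong p_*\cO_T\otimes_{\cO_S}p'_*\cO_{T'}$ together with monoidality;
\item for (ii), exactness of pushforward along a finite map together with closure of globally finitely presented sheaves under cokernels.
\end{enumerate}
Your version avoids appealing to the compatibility of analytification with fiber products and restriction of scalars. In exchange it needs the description of finite maps via coherent $\cO_S$-algebras. Both approaches are fine.

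Even the equivalence is more than you need here. Tensor products and cokernels of globally finitely presented sheaves are globally finitely presented directly: use right exactness of $\otimes$, and lift maps out of free sheaves using Theorem~B on $S$.
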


\begin{proof}
Assertions \ref{gfpi} and \ref{gfpii} follow from Proposition \ref{propadjeq} (since they become clear on the left side of the equivalence \eqref{adjeq}). Assertion \ref{gfpiii} holds because $p_*\cO_T$ is locally free of degree $d$, hence globally finitely presented by Proposition \ref{propmonoidaleq}.
\end{proof}

\subsection{Grothendieck topologies}
\label{parGroth}

We refer to  \cite[Definition~\href{https://stacks.math.columbia.edu/tag/03NH}{03NH}]{SP} for the definition of a site. Let $S$ be a Stein space. Let $X$ be a separated~$\cO(S)$\nobreakdash-scheme locally of finite presentation. As explained in \cite[\S 4.2]{Stein}, the analytification of \'etale schemes over~$X$ induces a morphism of sites $\varepsilon:(X^{\an})_{\cl}\to X_{\et}$ from the site of local isomorphisms of $X^{\an}$ (see \cite[XI, \S 4.0]{SGA43}) to the small \'etale site of $X$ (whose objects are the separated \'etale $X$-schemes of finite presentation, endowed with the \'etale topology). We define the analytification of an \'etale sheaf (or complex of sheaves) $\LL$ on $X$ to be $\LL^{\an}:=\varepsilon^*\LL$. As the topoi associated with $(X^{\an})_{\cl}$ and~$X^{\an}$ are equivalent (see \cite[III, Th\'eor\`eme~4.1]{SGA41}), pulling back by $\varepsilon$ induces, for all~$k\geq 0$, a comparison morphism
\begin{equation}
\label{defcompa}
H^k_{\et}(X,\LL)\to H^k(X^{\an},\LL^{\an}).
\end{equation}

If $X=\Spec(\cO(S))$, so $X^{\an}=S$, we consider the commutative diagram of sites
\begin{equation}
\label{sites}
\begin{aligned}
\xymatrix@C=2em@R=1em{
S_{\cl}\ar[d]&S_{\qc}\ar[r]\ar[l]\ar[d]&S_{\f}\ar[d]\\
\Spec(\cO(S))_{\et}&\Spec(\cO(S))_{\h}\ar[l]\ar[r]&\Spec(\cO(S))_{\f}.
}
\end{aligned}
\end{equation}
In \eqref{sites}, the objects of the site $\Spec(\cO(S))_{\f}$ are the finite $\cO(S)$-schemes of finite presentation, with coverings given by surjective morphisms. The site $S_{\f}$ has as objects all complex spaces over $S$ that are finite and globally finitely presented, with coverings given by holomorphic maps that are algebraically surjective in the sense defined in~\S\ref{paranal} (to verify that $S_{\f}$ is a site, use Lemma \ref{lemgfp} \ref{gfpi} and \ref{gfpii}).

The objects of the site $\Spec(\cO(S))_{\h}$ are the separated $\cO(S)$-schemes of finite presentation. It is endowed with Voevodsky's h topology (see \cite[Definition~\href{https://stacks.math.columbia.edu/tag/0ETS}{0ETS}]{SP}):
%(the definition of Voevodsky \cite[Definition 3.1.2]{Voehomo} was extended to the nonnoetherian setting by Rydh \cite[Definition 8.1]{Rydh}).
the coarsest Grothendieck topology for which Zariski coverings, as well as surjective proper morphisms of finite presentation, are coverings (see \cite[Lemmas~\href{https://stacks.math.columbia.edu/tag/0ETU}{0ETU}, \href{https://stacks.math.columbia.edu/tag/0ETV}{0ETV} and~\href{https://stacks.math.columbia.edu/tag/0ETW}{0ETW}]{SP}). The objects of $S_{\qc}$ are the Hausdorff and locally compact topological spaces over~$S$. It is endowed with the qc topology (see \cite[Definition~\href{https://stacks.math.columbia.edu/tag/09X0}{09X0}]{SP}). 

The morphisms in \eqref{sites} are the obvious ones, and~$S_{\qc}\to \Spec(\cO(S))_{\h}$ is well defined since analytifications of h coverings are qc coverings (see \cite[Lemma~\href{https://stacks.math.columbia.edu/tag/09X5}{09X5}]{SP}).

\subsection{The \'etale cohomology of a Stein algebra}
\label{parcompaS}

\begin{thm}
\label{thcompaS}
Let $S$ be a finite-dimensional Stein space. The morphisms
$$H^k_{\et}(\Spec(\cO(S)),\Z/m)\to H^k(S,\Z/m)$$
are isomorphisms for all $k\geq 0$ and all $m\geq 1$.
\end{thm}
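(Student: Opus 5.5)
We will compare both sides through the diagram of sites \eqref{sites}, showing that each of the four horizontal morphisms induces an isomorphism on cohomology with $\Z/m$ coefficients, and that the right vertical morphism $S_{\f}\to\Spec(\cO(S))_{\f}$ does too. The comparison morphism of the theorem then factors as a composition of isomorphisms, by commutativity of \eqref{sites}.

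\emph{Step 1: the right vertical arrow.} By Proposition \ref{propadjeq}, the underlying categories of $\Spec(\cO(S))_{\f}$ and $S_{\f}$ are equivalent. Coverings correspond by the definition of algebraic surjectivity. Fiber products correspond by Lemma \ref{lemgfp}. Hence the two sites are equivalent and have the same cohomology.

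\emph{Step 2: the algebraic side.} The morphism $\Spec(\cO(S))_{\h}\to\Spec(\cO(S))_{\et}$ induces isomorphisms on cohomology with torsion coefficients; this is the known h-descent for étale cohomology with torsion coefficients, via proper base change and cohomological descent \cite[Vbis]{SGA42}, in the Stacks Project form. For $\Spec(\cO(S))_{\h}\to\Spec(\cO(S))_{\f}$, we compute cohomology by Čech complexes of finite hypercoverings. The key input is that every class in $H^k_{\et}$ of a finite $\cO(S)$-scheme of finite presentation with $k>0$ dies on a finite surjective cover of finite presentation. This is Bhatt's theorem \cite[Theorem 1.1]{Bhatt}, applied to these affine schemes, together with a noetherian approximation argument since $\cO(S)$ is not noetherian. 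With $H^0$ matching trivially, a standard hypercovering argument shows that the higher direct images of $\Z/m$ from the h site to the f site vanish.

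\emph{Step 3: the analytic side.} The morphism $S_{\qc}\to S_{\cl}$ induces isomorphisms, since qc cohomology of locally compact Hausdorff spaces agrees with sheaf cohomology (Stacks Project). For $S_{\qc}\to S_{\f}$, we again need the vanishing of the higher direct images. By the same hypercovering argument, this reduces to two facts. First, every $\alpha\in H^k(T,\Z/m)$ with $k\geq1$, on an object $T$ of $S_{\f}$, is killed by an algebraically surjective finite globally finitely presented cover. Second, $H^0$ agrees. For the first fact we use Theorem \ref{thkill}: $T$ is a finite-dimensional Stein space, so there is a finite flat $p:\wS\to T$ of degree $d\geq1$ with $p^*\alpha=0$. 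By Lemma \ref{lemgfp}, $p$ is globally finitely presented, composed with $T\to S$. By Proposition \ref{propadjeq}, being flat and surjective (degree $\geq1$), $p$ is algebraically surjective. The flatness is crucial here, as Remark \ref{remnonsurj} shows.

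The main obstacle is organising the hypercovering argument in Steps 2 and 3 rigorously. One must verify that refining finite hypercoverings ad libitum computes qc (respectively h) cohomology. Concretely, one must show that the Čech-to-derived spectral sequences degenerate using only the "killing on covers" property, arguing by induction on $k$. This induction works because killing on covers gives effaceability of the higher direct images $\RR^k$. I would first try to prove an abstract lemma: if a morphism of sites $u$ is such that local classes of degree $>0$ are effaceable on $u$-coverings, then $\RR^k u_*\Z/m=0$ for all $k>0$.
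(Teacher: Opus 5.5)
Your proposal is correct. It rests on exactly the paper's inputs: h/qc cohomology agreeing with \'etale/sheaf cohomology, Bhatt's theorem with noetherian approximation, Theorem \ref{thkill} together with Lemma \ref{lemgfp} and Proposition \ref{propadjeq} (flatness yielding algebraic surjectivity), and a matching of $H^0$. What differs is how these are assembled. The paper never computes the cohomology of the f sites. It uses that hypercoverings in $\Spec(\cO(S))_{\f}$ and $S_{\f}$ correspond under \eqref{adjeq} and are h (resp.\ qc) hypercoverings, takes the colimit over them of the \v{C}ech-to-derived spectral sequences, and compares $E_2$ pages: the rows $q>0$ vanish by the killing results, and the row $q=0$ matches by \eqref{isoq=0}. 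You instead factor the comparison through $H^*_{\f}$, using the equivalence of the two f sites and the Leray spectral sequences of the morphisms of sites $a:\Spec(\cO(S))_{\h}\to\Spec(\cO(S))_{\f}$ and $b:S_{\qc}\to S_{\f}$. By Verdier's hypercovering theorem, the paper's row $q=0$ is in effect this f-site cohomology, so the two arguments are presentations of the same spectral sequence, and yours is arguably the more economical one. In particular, the obstacle you anticipate is not there. $\RR^q a_*\Z/m$ is the sheafification on $\Spec(\cO(S))_{\f}$ of $X\mapsto H^q_{\h}(X,\Z/m)$, and a presheaf of abelian groups sheafifies to zero exactly when each of its sections dies on some covering. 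So the killing statements \emph{are} the vanishing of $\RR^q a_*\Z/m$ and $\RR^q b_*\Z/m$ for $q>0$; no induction on $k$, hypercovering argument, or further abstract lemma is needed. What deserves more care than ``$H^0$ matching trivially'' is that $a_*\Z/m$ and $b_*\Z/m$ are the constant sheaves on the f sites. On the analytic side, this requires the pieces of a finite clopen partition of an object $T$ of $S_{\f}$ to be objects of $S_{\f}$ forming an algebraically surjective covering of $T$. This holds because clopen subspaces of $T$ correspond to idempotents of $\cO(T)$, hence to clopen subschemes of $\Spec(\cO(T))$, which is the same observation behind \eqref{isoq=0}.
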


\begin{proof}
We refer to \cite[Definition~\href{https://stacks.math.columbia.edu/tag/01G5}{01G5}]{SP} for the definition of a hypercovering of an object of a site. By \eqref{adjeq}, there is an equivalence of categories between hypercoverings $X_{\bullet}\to\Spec(\cO(S))$ in $\Spec(\cO(S))_{\f}$ and hypercoverings $T_{\bullet}\to S$ in~$S_{\f}$ (given by~$T_{\bullet}=X_{\bullet}^{\an}$ and $X_{\bullet}=\Spec(\cO(S_{\bullet}))$). Note that $X_{\bullet}\to\Spec(\cO(S))$ and~$T_{\bullet}\to S$ are hypercoverings for the h and qc topology respectively (see~\eqref{sites}). 

Take the colimits over all such hypercoverings (the transition maps being given by refinements of hypercoverings) of \v{C}ech-to-derived spectral sequences (for which see \cite[Lemma~\href{https://stacks.math.columbia.edu/tag/01GY}{01GY}]{SP}) for the h and qc topologies. This yields spectral sequences
\begin{alignat}{4}
\label{ssh}
&&E_2^{p,q}=\colim_{X_{\bullet}}\check{H}^p(X_{\bullet},\cH_{\h}^q(\Z/m))&\Rightarrow H^{p+q}_{\h}(\Spec(\cO(S)),\Z/m)\textrm{ \hspace{.5em}and } \\
\label{ssqc}
&&E_2^{p,q}=\colim_{T_{\bullet}}\check{H}^p(T_{\bullet},\cH_{\qc}^q(\Z/m))&\Rightarrow H^{p+q}_{\qc}(S,\Z/m),
\end{alignat}
where $\cH_{\h}^q(\Z/m)$ and $\cH_{\qc}^q(\Z/m)$ respectively are the presheaves $X\mapsto H^q_{\h}(X,\Z/m)$ and $T\mapsto H^q_{\qc}(T,\Z/m)$.
Using \cite[Lemma~\href{https://stacks.math.columbia.edu/tag/0EWH}{0EWH}]{SP} (which, in the excellent noetherian case, is due to Suslin and Voevodsky \cite[Corollary 10.10]{SV}) and \cite[Lemma~\href{https://stacks.math.columbia.edu/tag/09X4}{09X4}]{SP}, one can rewrite the spectral sequences \eqref{ssh} and \eqref{ssqc} as 
\begin{alignat}{4}
\label{sset}
&&E_2^{p,q}=\colim_{X_{\bullet}}\check{H}^p(X_{\bullet},\cH_{\et}^q(\Z/m))&\Rightarrow H^{p+q}_{\et}(\Spec(\cO(S)),\Z/m)\textrm{ \hspace{.5em}and } \\
\label{ss}
&&E_2^{p,q}=\colim_{T_{\bullet}}\check{H}^p(T_{\bullet},\cH^q(\Z/m))&\Rightarrow H^{p+q}(S,\Z/m),
\end{alignat}
where $\cH_{\et}^q(\Z/m)$ and $\cH^q(\Z/m)$ are the presheaves given by $X\mapsto H^q_{\et}(X,\Z/m)$ and~$T\mapsto H^q(T,\Z/m)$. In addition, there are natural morphisms from \eqref{ssh} to~\eqref{ssqc}, and hence from \eqref{sset} to \eqref{ss}, induced by vertical arrows in \eqref{sites}.

Fix $p\geq 0$ and $q>0$. For any morphism $X\to \Spec(\cO(S))$ in $\Spec(\cO(S))_{\f}$, any class in~$H^q_{\et}(X,\Z/m)$ can be killed after pullback by a covering $X'\to X$ in the site~$\Spec(\cO(S))_{\f}$ (this assertion can be deduced from \cite[Theorem 1.1]{Bhatt} using a limit argument based on \cite[Theorem~\href{https://stacks.math.columbia.edu/tag/09YQ}{09YQ}]{SP} to reduce to the excellent noetherian case). It therefore follows from the construction of refinements of hypercoverings given in \cite[Lemma~\href{https://stacks.math.columbia.edu/tag/01GJ}{01GJ}]{SP} that 
\begin{equation}
\label{et=0}
\colim_{X_{\bullet}}\check{H}^p(X_{\bullet},\cH_{\et}^q(\Z/m))=0.
\end{equation} 

Similarly, for any morphism $T\to S$ in $S_{\f}$, any class in~$H^q(T,\Z/m)$ can be killed after pullback by a finite flat holomorphic map $T'\to T$ of some degree $d\geq 1$, by Theorem~\ref{thkill}. As $T'\to T$ is a covering in the site $S_{\f}$ (by Lemma \ref{lemgfp} \ref{gfpiii} and Proposition \ref{propadjeq}), we deduce from the construction of refinements of hypercoverings given in \cite[Lemma~\href{https://stacks.math.columbia.edu/tag/01GJ}{01GJ}]{SP} that 
\begin{equation}
\label{=0}
\colim_{T_{\bullet}}\check{H}^p(T_{\bullet},\cH^q(\Z/m))=0.
\end{equation} 

For any morphism $X\to \Spec(\cO(S))$ in $\Spec(\cO(S))_{\f}$, Proposition~\ref{propadjeq} implies that closed and open subsets of $X$ are naturally in bijection with closed and open subsets of $X^{\an}$, and hence that the natural morphism $H^0_{\et}(X,\Z/m)\to H^0(X^{\an},\Z/m)$ is an isomorphism (the argument appears in \cite[Remark 3.11\,(iii)]{Steinsurface}). It follows that the natural morphism
\begin{equation}
\label{isoq=0}
\colim_{X_{\bullet}}\check{H}^p(X_{\bullet},\cH_{\et}^0(\Z/m))\to \colim_{T_{\bullet}}\check{H}^p(T_{\bullet},\cH^0(\Z/m))
\end{equation} 
is an isomorphism for all $p\geq 0$. 

Combining \eqref{et=0}, \eqref{=0} and \eqref{isoq=0} shows that the morphism between the spectral sequences \eqref{sset} and \eqref{ss} induces an isomorphism between their second pages, and hence also between their abutments. This completes the proof of the theorem.
\end{proof}

\subsection{Constructible coefficients}
\label{parconstructible}

\begin{prop}
\label{propdevissage}
Let $S$ be a finite-dimensional Stein space. Let $\LL$ be a constructible \'etale sheaf on $\Spec(\cO(S))$. Then the comparison morphisms
$$H^k_{\et}(\Spec(\cO(S)),\LL)\to H^k(S,\LL^{\an})$$
are isomorphisms for all $k\geq 0$.
\end{prop}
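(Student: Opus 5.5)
We reduce to Theorem~\ref{thcompaS} by a standard d\'evissage, using the relative comparison theorem \cite[Theorem~4.9]{Stein} to treat pushforwards along finite morphisms. Write $X:=\Spec(\cO(S))$. Since $\LL$ is constructible, it is killed by some $m\geq 1$ and is a finitely presented $\Z/m$-module sheaf. By the five lemma applied to short exact sequences of constructible sheaves, it therefore suffices to prove the statement for a class of sheaves into which every constructible $\LL$ embeds with constructible cokernel. We then argue by descending induction on $k$, using the vanishing of both sides for $k$ large. On the analytic side this vanishing comes from \cite[Korollar]{Hamm}. On the \'etale side it comes from comparing with the analytic side once the isomorphism is known for the embedding sheaves. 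The usual trick is to show injectivity for all constructible sheaves in degree $k$ and then bootstrap to surjectivity.

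The key step is to produce such embeddings. A constructible sheaf on $X$ embeds into a finite product of sheaves of the form $\pi_*\Z/m'$, where $\pi\colon Y\to X$ is a finite morphism of finite presentation (see \cite[IX, Proposition~2.14]{SGA43}, extended to the nonnoetherian setting by a limit argument via \cite[Theorem~\href{https://stacks.math.columbia.edu/tag/09YQ}{09YQ}]{SP}). For such a sheaf we have $H^k_{\et}(X,\pi_*\Z/m')=H^k_{\et}(Y,\Z/m')$ because $\pi$ is finite. By Proposition~\ref{propadjeq}, $Y=\Spec(\cO(T))$ for a finite globally finitely presented $p\colon T\to S$, with $Y^{\an}=T$, and $T$ is again a finite-dimensional Stein space. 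On the analytic side we need $(\pi_*\Z/m')^{\an}\simeq p_*\Z/m'$ and $\RR^qp_*=0$ for $q>0$. The first is the finite case of the relative comparison theorem \cite[Theorem~4.9]{Stein}, and the second holds since $p$ is finite. Hence $H^k(S,(\pi_*\Z/m')^{\an})=H^k(T,\Z/m')$. Compatibility of the comparison maps then reduces the claim for $\pi_*\Z/m'$ to Theorem~\ref{thcompaS} applied to $T$.

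The main obstacle is making the d\'evissage rigorous in the non-noetherian ring $\cO(S)$. Two points need care. First, constructible sheaves must admit embeddings into finite pushforwards of constant sheaves on finitely presented finite schemes. Second, the comparison must be compatible with $\pi_*$. I would handle the first by writing $X$ as a limit of finite type $\Z$-schemes, on which $\LL$ descends, applying the noetherian result there, and pulling back. The second is exactly what \cite[Theorem~4.9]{Stein} provides.
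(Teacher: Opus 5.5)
Your d\'evissage is essentially the paper's. Embed $\LL$ into finite direct sums of sheaves $\pi_*\M$ with $\pi$ finite of finite presentation and $\M$ constant constructible. Identify $\pi$ with $\pi^{\an}\colon T\to S$ via Proposition~\ref{propadjeq}. Use vanishing of higher direct images along finite maps on both sides, and conclude by Theorem~\ref{thcompaS} on $T$. The paper differs only in packaging: it iterates the embedding into a resolution $0\to\LL\to\LL_0\to\LL_1\to\cdots$ and compares the spectral sequences $E_1^{p,q}=H^q(-,\LL_p)$, which avoids any induction on $k$. The lemmas it cites (\cite[Lemmas 09Z7, 095R, 03RX]{SP}) already hold for quasi-compact quasi-separated schemes, so your limit argument is unnecessary, though harmless. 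Your explicit appeal to \cite[Theorem~4.9]{Stein} for $(\pi_*\M)^{\an}\simeq(\pi^{\an})_*\M^{\an}$ spells out a point the paper uses tacitly in its Leray argument.

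One part of your outline is wrong as written, though it is not needed: the descending induction on $k$ based on vanishing of both sides in large degrees. The \'etale vanishing is not available at this stage. Knowing the comparison for the sheaves $\pi_*\Z/m'$ gives vanishing only for those sheaves. The cokernel $\LL''$ of an embedding $\LL\hookrightarrow\LL'$ is not of that form, so dimension shifting $H^k_{\et}(\LL)\simeq H^{k-1}_{\et}(\LL'')\simeq\cdots$ never terminates. In fact, vanishing of $H^k_{\et}(\Spec(\cO(S)),\Z/m)$ for $k>\dim S$ is a corollary of the result you are proving. Moreover, with embeddings $\LL\hookrightarrow\LL'$ into good sheaves, the five-lemma argument only propagates upward in $k$.

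So run the trick you mention upward, starting at $k=0$. Write $\phi^k(\LL)$ for the comparison map, and consider the ladder between the long exact sequence of $0\to\LL\to\LL'\to\LL''\to0$ on $\Spec(\cO(S))$ and that of its exact analytification. Assume $\phi^i$ is bijective for all $i<k$ and all constructible sheaves. The four lemma on $H^{k-1}_{\et}(\LL')\to H^{k-1}_{\et}(\LL'')\to H^k_{\et}(\LL)\to H^k_{\et}(\LL')$ shows that $\phi^k(\LL)$ is injective for every constructible $\LL$, in particular for $\LL''$. Then the four lemma on $H^{k-1}_{\et}(\LL'')\to H^k_{\et}(\LL)\to H^k_{\et}(\LL')\to H^k_{\et}(\LL'')$ gives surjectivity of $\phi^k(\LL)$. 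No vanishing input is required, and with this correction your argument is complete.
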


\begin{proof}
By \cite[Lemmas~\href{https://stacks.math.columbia.edu/tag/09Z7}{09Z7},~\href{https://stacks.math.columbia.edu/tag/095R}{095R} and~\href{https://stacks.math.columbia.edu/tag/03RX}{03RX}]{SP}, the sheaf $\LL$ has a resolution
\begin{equation}
\label{resolL}
0\to\LL\to\LL_0\to\LL_1\to\dots
\end{equation}
by constructible sheaves $\LL_p$ that are finite direct sums of sheaves of the form $\pi_*\M$, where~$\pi:X\to \Spec(\cO(S))$ is finite and of finite presentation and $\M$ is a constant constructible \'etale sheaf on $X$. In view of the spectral sequences
$$E_1^{p,q}=H^q_{\et}(\Spec(\cO(S)),\LL_p)\Rightarrow H^{p+q}_{\et}(\Spec(\cO(S)),\LL)$$
and
$$\hspace{.6em}E_1^{p,q}=H^q(S,\LL_p^{\an})\Rightarrow H^{p+q}(S,\LL^{\an}),$$
induced by  \eqref{resolL}, we can assume that $\LL$ is of the form $\pi_*\M$ with $\pi$ and $\M$ as above. 

Set $T:=X^{\an}$. As $\pi^{\an}:T\to S$ is finite, the complex space $T$ is Stein. In addition, one has $X=\Spec(\cO(T))$ by Proposition \ref{propadjeq}. The higher direct images of~$\pi$ and~$\pi^{\an}$ vanish (see \cite[Proposition~\href{https://stacks.math.columbia.edu/tag/03QP}{03QP}]{SP} and \cite[III, Theorem~6.2]{Iversen}). Using the Leray spectral sequences 
$$E_2^{p,q}=H^p_{\et}(\Spec(\cO(S)),\RR^q\pi_*\M)\Rightarrow H^{p+q}_{\et}(\Spec(\cO(T)),\M)$$
and
$$\hspace{.6em}E_2^{p,q}=H^p(S,\RR^q(\pi^{\an})_*\M^{\an})\Rightarrow H^{p+q}(T,\M^{\an}),$$one can thus reduce to the case where $\LL$ is constant constructible (after replacing~$S$ with~$T$ and $\LL$ with $\M$). We may finally assume that $\LL=\Z/m$ for some $m\geq 1$, in which case the statement to be proved is exactly Theorem~\ref{thcompaS}.
\end{proof}

\begin{thm}
\label{thproperpf}
Let $S$ be a finite-dimensional Stein space. Let $f: X\to\Spec(\cO(S))$ be a proper $\cO(S)$\nobreakdash-scheme of finite presentation. Let $\LL$ be a constructible \'etale sheaf on $X$. Then the comparison morphisms
$$H^k_{\et}(X,\LL)\to H^k(X^{\an},\LL^{\an})$$
are isomorphisms for all $k\geq 0$.
\end{thm}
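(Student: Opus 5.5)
The plan is to reduce Theorem \ref{thproperpf} to Proposition \ref{propdevissage} by pushing forward along $f$. This uses the relative comparison theorem \cite[Theorem~4.9]{Stein}, as announced in the introduction. The two Leray spectral sequences are
$$E_2^{p,q}=H^p_{\et}(\Spec(\cO(S)),\RR^qf_*\LL)\Rightarrow H^{p+q}_{\et}(X,\LL)$$
and
$$E_2^{p,q}=H^p(S,\RR^q(f^{\an})_*\LL^{\an})\Rightarrow H^{p+q}(X^{\an},\LL^{\an}).$$
The comparison morphisms \eqref{defcompa} are compatible with pullback along $\varepsilon$. Hence there is a morphism from the first spectral sequence to the second. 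On $E_2$ it is the composition
$$H^p_{\et}(\Spec(\cO(S)),\RR^qf_*\LL)\to H^p(S,(\RR^qf_*\LL)^{\an})\to H^p(S,\RR^q(f^{\an})_*\LL^{\an}),$$
where the second arrow is induced by the base change morphism $(\RR^qf_*\LL)^{\an}\to\RR^q(f^{\an})_*\LL^{\an}$. It is enough to show that both arrows are isomorphisms for all $p,q\geq 0$. Then the map on $E_2$ pages is an isomorphism, and so is the map on abutments.

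For the second arrow I would invoke the relative comparison theorem \cite[Theorem~4.9]{Stein}. For $f$ proper of finite presentation and $\LL$ constructible, it states that the base change morphism $(\RR^qf_*\LL)^{\an}\to\RR^q(f^{\an})_*\LL^{\an}$ is an isomorphism. It is the Stein analogue of the proper base change comparison in \cite[XVI]{SGA43}, and this is exactly where properness is needed.

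For the first arrow, the sheaf $\RR^qf_*\LL$ is constructible on $\Spec(\cO(S))$ by finiteness for proper morphisms of finite presentation with constructible coefficients. The Stacks Project treats this in the non-noetherian setting via a limit argument, reducing to a finitely generated subring of $\cO(S)$ over which $X$ and $\LL$ are defined. Proposition \ref{propdevissage} then applies to $\RR^qf_*\LL$ and shows that the first arrow is an isomorphism.

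The main point to check is compatibility: the map of spectral sequences must be the composite above. This follows by functoriality of $\varepsilon$ with respect to the commutative square formed by $f$, $f^{\an}$ and the morphisms of sites $\varepsilon_X$, $\varepsilon_{\Spec(\cO(S))}$. It gives a natural transformation $\varepsilon^*\RR f_*\to\RR f^{\an}_*\varepsilon^*$ compatible with the comparison maps. A second point, if \cite[Theorem~4.9]{Stein} is only stated with torsion or $m$-torsion coefficients, is the constructibility and finite-presentation hypotheses needed there. Both hold because $X$ is of finite presentation and proper.
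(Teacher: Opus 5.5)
Your proposal is correct and is essentially the paper's own proof. The paper also compares the Leray spectral sequences of $f$ and $f^{\an}$. It gets constructibility of $\RR^qf_*\LL$ from the Stacks Project and the isomorphism $(\RR^qf_*\LL)^{\an}\isoto\RR^q(f^{\an})_*\LL^{\an}$ from \cite[Theorem~4.9]{Stein}, applies Proposition~\ref{propdevissage} to get an isomorphism on $E_2$ pages, and concludes on the abutments.
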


\begin{proof}
Consider the Leray spectral sequences 
\begin{equation}
\label{Leray1}
E_2^{p,q}=H^p_{\et}(\Spec(\cO(S)), \RR^qf_*\LL)\Rightarrow H^{p+q}_{\et}(X,\LL)
\end{equation}
and
\begin{equation}
\label{Leray2}
E_2^{p,q}=H^p(S,\RR^q(f^{\an})_*\LL^{\an})\Rightarrow H^{p+q}(X^{\an},\LL^{\an}).
\end{equation}
The sheaves $\RR^qf_*\LL$ are constructible by \cite[Theorem~\href{https://stacks.math.columbia.edu/tag/0GL0}{0GL0}]{SP} and the natural morphisms $(\RR^qf_*\LL)^{\an}\to \RR^q(f^{\an})_*\LL^{\an}$ are isomorphisms by \cite[Theorem 4.9]{Stein}. The natural morphism from \eqref{Leray1} to \eqref{Leray2} is therefore an isomorphism on page $2$ by Proposition \ref{propdevissage}, hence an isomorphism between the abutments.
\end{proof}

\begin{rems}
\label{remscex}
(i)
Theorem \ref{thproperpf} does not hold in general if $S$ is not finite-dimensional (even for $k=1$, $X=\Spec(\cO(S))$ and $\LL=\Z/2$, see \cite[Remark 4.13\,(iii)]{Steinsurface}).

(ii) Theorem \ref{thproperpf} fails in general if $f$ is not proper (even for $S$ of dimension~$0$ and~$f$ an open immersion, with $k=0$ and $\LL=\Z/2$, see \cite[Remark 3.11\,(i)]{Steinsurface}).

(iii) Theorem \ref{thproperpf} does not hold in general if $f$ is assumed to be of finite type but not of finite presentation (even for $S$ of dimension $0$ and $f$ a closed immersion, with~$k=0$ and $\LL=\Z/2$, as the example of \cite[Remark 2.9]{Steinsurface} shows).

(iv) Theorem \ref{thproperpf} fails in general if the sheaf $\LL$ is not constructible (even for~$k=0$ and~${X=\Spec(\cO(S))}$ with $S$ of dimension $0$, see \cite[Remark 6.7\,(iii)]{Steinsurface}).
\end{rems}

\subsection{A \texorpdfstring{$G$}{G}-equivariant comparison theorem}
\label{parGeq}

Let $G:=\Gal(\C/\R)\simeq\Z/2$ be the group generated by complex conjugation. A $G$-\textit{equivariant complex space} is a complex space endowed, as a locally ringed space, with an action of $G$ such that the complex conjugation acts $\C$-antilinearly. It is said to be \textit{Stein} if so is the underlying complex space. For any $G$-equivariant Stein space $S$, there is an analytification functor associating with an $\cO(S)^G$-scheme locally of finite presentation~$X$ a $G$\nobreakdash-equivariant complex space $X^{\an}$ over $S$ (see \cite[\S 6.3]{Stein}).

Theorem \ref{thproperpf} readily extends to the $G$-equivariant setting. This extension is not used in this article, but is relevant for applications to real-analytic geometry (as in~\cite[\S 7]{Stein} or \cite[\S 10]{Steinsurface}).

\begin{thm}
\label{thproperpfG}
Let $S$ be a finite-dimensional $G$-equivariant Stein space. Let $X$ be a proper $\cO(S)^G$\nobreakdash-scheme of finite presentation. Let $\LL$ be a constructible \'etale sheaf on~$X$. Then the natural morphisms
$$H^k_{\et}(X,\LL)\to H^k_G(X^{\an},\LL^{\an})$$
are isomorphisms for all $k\geq 0$.
\end{thm}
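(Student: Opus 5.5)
The plan is to reduce Theorem \ref{thproperpfG} to the non-equivariant Theorem \ref{thproperpf}, using the Hochschild--Serre spectral sequences on both sides.

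Set $R:=\cO(S)^G$. Then $\cO(S)=R\otimes_{\R}\C$, since an element is the sum of its $G$-invariant and $G$-anti-invariant parts and the latter are $i$ times invariant elements. The morphism $X_{\cO(S)}:=X\times_R\Spec(\cO(S))\to X$ is therefore a finite étale Galois cover with group $G$. Moreover $X_{\cO(S)}$ is a proper $\cO(S)$-scheme of finite presentation, and $(X_{\cO(S)})^{\an}$ is the complex space underlying the $G$-equivariant complex space $X^{\an}$ (see \cite[\S 6.3]{Stein}). The $G$-action on $(X_{\cO(S)})^{\an}$ is the one induced from the Galois action on $X_{\cO(S)}$.

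On the étale side I will use the Hochschild--Serre spectral sequence
$$E_2^{p,q}=H^p(G,H^q_{\et}(X_{\cO(S)},\LL))\Rightarrow H^{p+q}_{\et}(X,\LL).$$
On the topological side, equivariant cohomology $H^*_G(X^{\an},\LL^{\an})$ is by definition the cohomology of $G$-equivariant sheaves, that is $\RR\Gamma(G,\RR\Gamma(X^{\an},-))$. It therefore comes with the analogous spectral sequence
$$E_2^{p,q}=H^p(G,H^q(X^{\an},\LL^{\an}))\Rightarrow H^{p+q}_G(X^{\an},\LL^{\an}).$$

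Next I will check that the comparison morphism is compatible with these spectral sequences. The construction \eqref{defcompa} is functorial, so the comparison for $X_{\cO(S)}$ is $G$-equivariant. Concretely, the morphism of sites $\varepsilon$ sends the Galois cover to the $G$-torsor $X^{\an}\to X^{\an}/G$ at the level of topoi, and one obtains a morphism of spectral sequences. This compatibility is the step requiring the most care: one has to set up $\varepsilon$ for $G$-equivariant spaces and check that it commutes with the Hochschild--Serre construction. I would do this by writing both spectral sequences as arising from the Čech nerve of the $G$-torsor. On the algebraic side this nerve is $X_{\cO(S)}\times G^{\bullet}$; its analytification is the corresponding nerve of $X^{\an}$ with its $G$-action, which computes equivariant cohomology.

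Finally, $\LL$ pulled back to $X_{\cO(S)}$ is constructible. By Theorem \ref{thproperpf}, applied to the proper finitely presented $\cO(S)$-scheme $X_{\cO(S)}$, the maps $H^q_{\et}(X_{\cO(S)},\LL)\to H^q(X^{\an},\LL^{\an})$ are $G$-equivariant isomorphisms for all $q$. The morphism of spectral sequences is thus an isomorphism on $E_2$, hence on the abutments.
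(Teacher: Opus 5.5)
Your proposal is correct and is the paper's proof: both compare the Hochschild--Serre spectral sequences for the Galois $G$-cover $X_{\cO(S)}\to X$ and for the $G$-action on $X^{\an}$. Both get an isomorphism on $E_2$ pages, hence on abutments, from Theorem~\ref{thproperpf} applied to $X_{\cO(S)}$, and the paper simply cites \cite[Remark 10.9]{Scheiderer} for the compatibility of the two spectral sequences that you propose to check via \v{C}ech nerves. In that check, the target must be the topos of $G$-equivariant sheaves on $X^{\an}$, not the quotient space $X^{\an}/G$, since $G$ may have fixed points.
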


\begin{proof}
The $G$-equivariant morphism of sites $\varepsilon:(X^{\an})_{\cl}\to (X_{\cO(S)})_{\et}$ induces a morphism between the Hochschild--Serre spectral sequences 
\begin{equation}
\label{HS1}
E_2^{p,q}=H^p(G, H^q_{\et}(X_{\cO(S)},\LL))\Rightarrow H^{p+q}_{\et}(X,\LL)
\end{equation}
and
\begin{equation}
\label{HS2}
E_2^{p,q}=H^p(G, H^q(X^{\an},\LL^{\an}))\Rightarrow H_G^{p+q}(X^{\an},\LL^{\an})
\end{equation}
(see \cite[Remark 10.9]{Scheiderer}). It induces an isomorphism between the second pages of~\eqref{HS1} and \eqref{HS2} by Theorem~\ref{thproperpf}, hence also between their abutments.
\end{proof}

\section{Application to the coniveau}
\label{secconiveau}

Theorem \ref{thintegral} below is an application of our main comparison theorem (Theorem~\ref{thproperpf}) to the coniveau of integral cohomology classes on Stein spaces.

\subsection{Coniveau of torsion cohomology classes, in algebraic geometry}
\label{parBCTV}

By way of preparation for the proof of Theorem \ref{thintegral}, we include the following refinement of Colliot-Th\'el\`ene and Voisin's \cite[Th\'eor\`eme 3.1]{CTV}. In its proof, the reduction to the smooth case uses Hoobler's trick \cite{Hoobler} (see also \cite[Proof of Theorem~7.8]{Kerz}).

\begin{thm}
\label{thCTV}
Let $V$ be a quasi-projective algebraic variety over $\C$. Fix $k\geq 0$ and a torsion class ${\alpha\in H^k(V^{\an},\Z)}$. Let $\Xi\subset V^{\an}$ be a countable subset. There exists a dense open subset $V'\subset V$ such that $\Xi\subset (V')^{\an}$ and $\alpha|_{(V')^{\an}}=0$ in $H^k((V')^{\an},\Z)$.
\end{thm}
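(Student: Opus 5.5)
The plan is to reduce to the case where $V$ is smooth, and there to combine the Bloch--Kato conjecture, as in \cite[Th\'eor\`eme~3.1]{CTV}, with a genericity argument that exploits the uncountability of $\C$ to force $(V')^{\an}$ to contain $\Xi$.

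\emph{Reduction to the smooth case (Hoobler's trick).} First, replace $V$ by an affine Jouanolou torsor $J\to V$. This is an affine bundle, so it induces an isomorphism on singular cohomology. Choose lifts of the points of $\Xi$ to $J^{\an}$; since images of dense opens of $J$ contain dense opens of $V$, the statement for $J$ gives it for $V$. So we may assume $V$ affine, and embed $V\subset W:=\A^N_\C$.

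The pair $(W,V)$ has a henselization. By Gabber's or Elkik's retraction theorem for henselian pairs, followed by a limit argument, there is an affine \'etale neighborhood $W'\to W$ of $V$ with $V\subset W'$ closed and a retraction $r:W'\to V$. Then $W'$ is smooth. The class $r^*\alpha$ is torsion, and $\alpha=(r^*\alpha)|_{V^{\an}}$. If $W''\subset W'$ is a dense open containing $\Xi$ with $r^*\alpha|_{(W'')^{\an}}=0$, we would like $V':=W''\cap V$ to work, but $W''\cap V$ need not be dense in $V$.

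To avoid this, we run the smooth case in a form that allows us to prescribe, besides the countable set $\Xi$, that the removed closed subset $Z$ contains no generic point of an irreducible component of $V$. This is just finitely many extra points to avoid, so it fits the genericity argument below.

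\emph{Smooth case.} Let $V$ be smooth and connected, and let $\alpha$ be $m$-torsion. Then $\alpha$ is the Bockstein of some $\beta\in H^{k-1}(V^{\an},\Z/m)\cong H^{k-1}_{\et}(V,\Z/m)$. By Bloch--Kato (Rost--Voevodsky), the restriction of $\beta$ to the generic point is a sum of symbols. Symbols are cup products of Kummer classes of units, and these lift to integral classes (products of classes $c_1$ coming from maps to $\mathbb{G}_m$). Hence the Bockstein of $\beta$ vanishes at the generic point, so $\alpha$ vanishes on some dense open, as in \cite{CTV}.

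To get the open to contain a prescribed set, we use the semilocal form. By Bloch--Ogus--Gabber, for the semilocal ring $\cO_{V,F}$ at any finite set $F$, \'etale cohomology injects into that of the generic point, and integral singular cohomology behaves likewise. Hence for each finite $F\subset\Xi$ there is an open $V_F\supset F$ on which $\alpha$ vanishes.

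\emph{Countable $\Xi$ (main obstacle).} Passing from finite to countable sets is the step I expect to be hardest. I would make the semilocal construction depend algebraically on parameters. Gabber's presentation lemma and the effacement argument are built from general linear projections $V\to\A^{d}$ and general hypersurfaces in a projective embedding. These form an irreducible parameter variety $P$ over $\C$, and for a parameter $t\in P(\C)$ one obtains a closed subset $Z_t\subsetneq V$ with $\alpha|_{(V\setminus Z_t)^{\an}}=0$. The plan is to show that, for each $x\in\Xi$, the set $\{t: x\in Z_t\}$ is contained in a proper Zariski-closed subset of $P$; this is exactly what the finite-set statement guarantees, applied generically.

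Since $\C$ is uncountable, $P(\C)$ is not a countable union of proper closed subsets. A single $t$ therefore works for all of $\Xi$, together with the finitely many generic points from the reduction step, and $V':=V\setminus Z_t$ is the required open. The delicate point is checking that the effacement construction really varies in such an algebraic family, so that the bad parameters for each $x$ form a proper closed subset.
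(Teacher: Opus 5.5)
Your architecture is the paper's: reduce to $V$ affine, then to $V$ smooth by Hoobler's trick, get vanishing on a dense open from \cite[Th\'eor\`eme~3.1]{CTV}, and move the open subset off $\Xi$ by Bloch--Ogus--Gabber effacement for $H^*((-)^{\an},\Z)$. For countable $\Xi$ you make very general choices and use the uncountability of $\C$, which is what Remark~\ref{remBO} asserts, at the same level of detail as yours. However, both of your reduction steps fail as written.

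\emph{The Jouanolou step.} Knowing $\pi^*\alpha|_{(J')^{\an}}=0$ for a dense open $J'\subset J$ says nothing about $\alpha$ on $\pi(J')$. The open $J'$ need not be of the form $\pi^{-1}(V')$, and the fibers of $J'\to\pi(J')$ are open subsets of affine spaces with possibly nontrivial topology. Zariski-local sections of $\pi$ only give vanishing near each point separately, and such local vanishings do not glue. The paper instead chooses an ample divisor $D\subset\overline{V}$ containing $\overline{V}\setminus V$ and disjoint from $\Xi$. This is possible because each point of $\Xi$ imposes a proper linear condition on sections of $\cI_{\overline{V}\setminus V}(N)$ for $N\gg 0$, and a complex vector space is not a countable union of proper subspaces. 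It then replaces $V$ by the affine dense open $\overline{V}\setminus D\supset\Xi$.

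\emph{The retraction step.} This is the more serious gap: the retraction $r:W'\to V$ does not exist when $V$ is singular. Take the node $f=y^2-x^2(1+x)$. A retraction would split $\C[[x,y]]\to\C[[x,y]]/(f)$. Since $\C[[x,y]]$ is a normal domain, the images $X,Y$ of $x,y$ would satisfy $Y=\pm X\sqrt{1+X}$, forcing $y\mp x\sqrt{1+x}\in(f)$, which is absurd. Such retractions exist when $V$ is smooth, which is exactly the case you are trying to reach. Nor is there any base change theorem extending the \emph{integral} class $\alpha$ to an \'etale neighborhood of $V$.

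The missing idea is to pass to finite coefficients:
\begin{enumerate}[label=(\roman*)]
\item Choose $m$ with $m\alpha=0$ and $\beta\in H^{k-1}(V^{\an},\Z/m)$ with $\partial_m(\beta)=\alpha$, and view $\beta$ in $H^{k-1}_{\et}(V,\Z/m)$ via Artin's comparison theorem.
\item Extend it to the henselization $\whV$ of $\A^N_\C$ along $V$ using Gabber's affine base change theorem, which holds for torsion \'etale coefficients.
\item By a limit argument, descend it to a class $\tdelta$ on a smooth affine variety $W$ equipped with a morphism $\whV\to W$.
\item Replace $V$ by $W$, $\alpha$ by the Bockstein of the analytic image of $\tdelta$, and $\Xi$ by its image, then pull back the open subset found on $W$.
\end{enumerate}
The map goes from $V$ to $W$, so no retraction is needed. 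Density of the pulled-back open in $V$ is ensured by adding to $\Xi$ at the outset one point of each irreducible component of $V$. This is simpler than your device of avoiding generic points.
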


\begin{proof}
Let $\overline{V}$ be a projective compactification of $V$. Let $D\subset \overline{V}$ be an ample divisor containing $\overline{V}\setminus V$ and such that $D^{\an}\cap \Xi=\varnothing$. After replacing $V$ with $\overline{V}\setminus D$ and $\alpha$ with~$\alpha|_{(\overline{V}\setminus D)^{\an}}$, we may assume that $V$ is affine.

Let $m\geq 1$ be such that $m\alpha=0$. Denote by $\partial_m$ the boundary maps induced by the short exact sequence $0\to\Z\xrightarrow{m}\Z\to\Z/m\to 0$. Choose a class $\beta\in H^{k-1}(V^{\an},\Z/m)$ such that $\partial_m(\beta)=\alpha$. Let $\tbeta\in H^{k-1}_{\et}(V,\Z/m)$ be the inverse image of $\beta$ by Artin's comparison isomorphism $H^{k-1}_{\et}(V,\Z/m)\isoto H^{k-1}(V^{\an},\Z/m)$.

Fix $N\geq 0$ and a closed embedding $V\subset \A^N_{\C}$. Let $i:V\hookrightarrow\whV$ be the henselization of~$\A^N_\C$ along $V$ (see \cite[Lemma~\href{https://stacks.math.columbia.edu/tag/0A02}{0A02}]{SP}). By Gabber's affine base change theorem (see \cite[Theorem~1]{Gabber}), there exists $\tgamma\in H^{k-1}_\et(\whV,\Z/m)$ with~$i^*\tgamma=\tbeta$. The scheme $\whV$ is a directed limit of affine \'etale $\A^N_{\C}$-schemes (see the proof of \cite[Lemma~\href{https://stacks.math.columbia.edu/tag/0A02}{0A02}]{SP}). By \cite[Theorem~\href{https://stacks.math.columbia.edu/tag/09YQ}{09YQ}]{SP}, there exist a smooth affine algebraic variety~$W$ over~$\C$, a morphism of~$\C$\nobreakdash-schemes~${f:\whV\to W}$, and a class~${\tdelta\in H^{k-1}_{\et}(W,\Z/m)}$ such that~$f^*\tdelta=\tgamma$. Let $\delta\in H^k(W^{\an},\Z/m)$ be the image of~$\tdelta$ by the comparison morphism $H^{k-1}_{\et}(W,\Z/m)\to H^{k-1}(W^{\an},\Z/m)$. Replacing~$V$ with $W$, the class $\alpha$ with~$\partial_m(\delta)$ and $\Xi$ with $(f\circ i)^{\an}(\Xi)$, we reduce to the case where $V$ is smooth.

By \cite[Th\'eor\`eme 3.1]{CTV}, there exists a dense open subset $V^+\subset V$ such that~$\alpha|_{(V^+)^{\an}}=0$.  Set $Z^+:=V\setminus V^+$, so $\alpha$ lifts to a class $\alpha^+\in H^k_{Z^+}(V,\Z)$. By the Bloch--Ogus--Gabber effacement theorem (see \cite[(4.2.3)]{BO}, \cite[Theorem~2.2.7]{CTHK} or \cite[Theorem 1.1]{Schreieder}, and Remark \ref{remBO} below) applied over $k=\C$ to the cohomology theory~$H^*=H^*((-)^{\an},\Z)$, there exists a dense open subset~${V'\subset V}$ with~${\Xi\subset (V')^{\an}}$ such that $\alpha^+$ (hence also $\alpha$) vanishes in $H^k((V')^{\an},\Z)$. 
\end{proof}

\begin{rem}
\label{remBO}
Keep the notation of the proof of Theorem \ref{thCTV}. The Bloch--Ogus--Gabber effacement theorem as stated in \cite{BO, CTHK, Schreieder} applies when~$\Xi$ is a single point (in \cite{BO}) or a finite set (in \cite{CTHK, Schreieder}). However, in the situation we consider (over $k=\C$ for~$H^*=H^*((-)^{\an},\Z)$), making very general choices in the proofs of \cite{BO, CTHK, Schreieder} allows them to go through when~$\Xi$ is countable. It is easier to check this for the proof of \cite[Theorem~2.2.7]{CTHK} given in \cite[\S\S3-4]{CTHK} (in \cite[\S3]{CTHK}, use that a countable intersection of dense open subsets in an algebraic variety over $\C$ contains $\C$-points).
%For Schreieder's proof, would need to go through the proof of Levine's moving lemma.
\end{rem}

\subsection{Coniveau of integral cohomology classes, up to a multiple}
\label{paruptomultiple}

As a first step towards Theorem \ref{thintegral}, we show that any integral cohomology class of degree~$\geq\nobreak2$ on a finite-dimensional Stein space has a multiple of coniveau~$\geq\nobreak1$, in a strong sense (see Propositions~\ref{propconiveauOka} and \ref{propconiveauuptomultiple}). Lemmas~\ref{lemodd} and \ref{lemeven} are used to prove Proposition~\ref{propconiveauOka} for odd and even degree classes respectively. Lemma \ref{lemeven} appears in~\cite[Proposition 3.1]{BV} but the proof there only applies to finite CW complexes. 

\begin{lem}
\label{lemodd}
Let $T$ be a finite-dimensional CW complex. Fix ${\alpha\in H^k(T,\Z)}$ for some $k\geq 1$ odd. Then there exist $m\geq 1$ and a continuous map~$f:T\to\bS^k$ such that~$m\alpha=f^*\omega$ (where $\omega\in H^k(\bS^k,\Z)$ denotes the canonical generator).
\end{lem}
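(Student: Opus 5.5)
The approach is obstruction theory for lifting a classifying map through $\bS^k\to K(\Z,k)$, with the obstructions killed by replacing $\alpha$ by a multiple. Let $\theta:\bS^k\to K(\Z,k)$ classify $\omega$ and let $F_\theta$ be its homotopy fiber. Since $k$ is odd, $H^*(K(\Z,k),\Q)$ is an exterior algebra on one generator of degree $k$ (Serre), so $\theta$ induces an isomorphism on $H^*(-,\Q)$. For $k\geq 3$ both spaces are simply connected, so by Serre's rational Hurewicz/Whitehead theorem $\theta$ is an isomorphism on $\pi_*(-)\otimes\Q$. It is also an isomorphism on $\pi_i$ for $i\leq k$ by Hurewicz. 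The long exact sequence then gives $\pi_i(F_\theta)=0$ for $i\leq k$ and $\pi_i(F_\theta)$ finite for $i>k$, exactly as in the proof of Proposition~\ref{kill3}. (For $k=1$ the claim is trivial: $\bS^1=K(\Z,1)$ and we may take $m=1$.)

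Let $\varphi:T\to K(\Z,k)$ classify $\alpha$ and let $n=\dim T$. The obstructions to lifting $\varphi$ up to homotopy to $\bS^k$ lie successively in $H^{i+1}(T,\pi_i(F_\theta))$ for $k<i<n$; those with $i\geq n$ vanish for dimension reasons. Each such group is killed by some integer $N_i$, namely the exponent of the finite group $\pi_i(F_\theta)$. So we may modify the class: replace $\varphi$ by $\mu_N\circ\varphi$, where $\mu_N:K(\Z,k)\to K(\Z,k)$ classifies $N\iota$.

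The key step, and the main subtlety, is that the obstruction classes are not simply multiplied by $N$ when $\varphi$ is replaced by $\mu_N\circ\varphi$, because higher obstructions depend on the choices made at earlier stages. To handle this cleanly, I would use the rational equivalence to build a self-map. There is a map $g:K(\Z,k)^{(n+1)}\to\bS^k$ on a skeleton such that $\theta\circ g\simeq\mu_N$ restricted to that skeleton, for a suitable $N$. Such a map is constructed by extending cell by cell over $K(\Z,k)^{(n+1)}$: at each stage the obstruction to extending a lift lies in a finite group, and it can be killed by precomposing with a degree-$N_i$ self-map of $K(\Z,k)$. These self-maps act on the earlier-built partial lift; one uses that the obstruction for a composite with a multiplication map becomes multiplied, and picks $N=\prod N_i$, possibly with powers.

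Alternatively, and more cleanly, one can invoke the standard fact that a rational equivalence of simply connected finite-type spaces is a "$\Q$-local equivalence after multiplication": the fiberwise localization argument, or Serre class theory, shows that $\mu_N$ factors through $\theta$ on finite skeleta. Given $g$, cellular approximation lets $\varphi$ land in $K(\Z,k)^{(n)}$. Then $f:=g\circ\varphi:T\to\bS^k$ satisfies $f^*\omega=\varphi^*\mu_N^*\iota=N\alpha$, so $m=N$ works. I expect constructing $g$ to be the only real obstacle. The route I would try first is induction on skeleta of $K(\Z,k)$, using Serre's theorem that $\pi_i(\bS^k)$ is finite for $i>k$ when $k$ is odd, so all obstruction groups are torsion with bounded exponent.
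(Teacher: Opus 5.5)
Your overall route is the paper's. By induction on $d$ you build maps $g_d\colon K(\Z,k)_{\leq d}\to\bS^k$ lifting $\mu_{m_d}|_{K(\Z,k)_{\leq d}}$ through $\bS^k\to K(\Z,k)$. Each new obstruction is killed by precomposing the previous partial lift with a multiplication map $\mu_{m'}$. Finally you set $f:=g_d\circ\varphi$ for a cellular classifying map $\varphi$ of $\alpha$. Your analysis of the homotopy fiber, your treatment of $k=1$, and the final computation of $f^*\omega$ are fine.

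The gap is in the one step that is not routine obstruction theory. By naturality of obstruction cocycles, precomposing with a cellular $\mu_{m'}$ replaces the obstruction class $o\in H^d(K(\Z,k),\pi_{d-1}(F_\theta))$ by $\mu_{m'}^*o$. But $\mu_{m'}^*$ is not multiplication by $m'$ on $H^d(K(\Z,k),A)$ for $d>k$. It is a ring map, so on cup products it picks up higher powers of $m'$; for example, $\mu_2^*$ fixes $\iota\cdot P^1\iota\in H^{10}(K(\Z,3),\Z/3)$. So your claim that the obstruction ``becomes multiplied'' is incorrect. Moreover, the bounded exponent of the obstruction groups says nothing by itself about whether some $\mu_{m'}^*$ kills $o$. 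The appeal to fiberwise localization or Serre classes in your alternative paragraph is too vague to supply this.

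What is needed, and what the paper imports from \cite{BV}, is the following fact: for a finite abelian group $A$, the map $\mu_{m'}^*$ annihilates $\widetilde{H}^*(K(\Z,k),A)$ once $m'$ is sufficiently divisible. One proof goes as follows.
\begin{enumerate}[label=(\roman*)]
\item By Serre and Cartan, $\widetilde{H}^*(K(\Z,k),\Z/p)$ is generated as an algebra by classes $P^I\iota$, obtained by applying iterated Steenrod operations to the reduction $\iota$ of the fundamental class.
\item Since $\mu_p^*(P^I\iota)=P^I(p\iota)=0$, the ring map $\mu_p^*$ kills reduced mod $p$ cohomology.
\item The exact sequences induced by $0\to\Z/p\to\Z/p^r\to\Z/p^{r-1}\to 0$, together with induction on $r$, show that $\mu_{p^r}^*=\mu_p^*\circ\mu_{p^{r-1}}^*$ kills $\widetilde{H}^*(K(\Z,k),\Z/p^r)$.
\end{enumerate}
Hence $\mu_e^*$ kills $\widetilde{H}^*(K(\Z,k),A)$ when $e$ is the exponent of $A$. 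So your choice of $N_i$ as the exponent of $\pi_i(F_\theta)$ does work at each stage, but for this reason and not the one you gave. With this lemma inserted, your proof is complete.
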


\begin{proof}
Let $K(\Z,k)$ be an Eilenberg--MacLane CW complex with tautological class $\beta\in H^k(K(\Z,k),\Z)$. Let $K(\Z,k)_{\leq d}$ be the $d$-skeleton of $K(\Z,k)$. For $m\geq 1$, we let~${\mu_m:K(\Z,k)\to K(\Z,k)}$ be a cellular map (well defined up to homotopy) induced by multiplication by~$m$ on $\Z$, so that $(\mu_m)^*\beta=m\beta$.

Let $\nu:\bS^k\to K(\Z,k)$ be a continuous map classifying $\omega$ (so $\nu^*\beta=\omega$) and let $F_{\nu}$ be the homotopy fiber of $\nu$. Since $k$ is odd, Serre's results on the homotopy groups of spheres \cite[Proposition 3 p.\,494]{Serrehomologie} and the long exact sequence of homotopy groups in a fibration show that $\pi_i(F_{\nu})$ is finite for all $i$, and vanishes for $1\leq i\leq k$.

We claim that, for all $d\geq 0$, there exist an integer $m_d\geq 1$ and a continuous map~$g_d:K(\Z,k)_{\leq d}\to \bS^k$ such that $\nu\circ g_d=(\mu_{m_d})|_{K(\Z,k)_{\leq d}}$. The proof is by induction on~$d$. Assuming that $m_{d-1}$ and $g_{d-1}$ have been constructed, we need to extend the lift $g_{d-1}$ of $\nu$ to a lift $g_d:K(\Z,k)_{\leq d}\to\bS^k$, after maybe replacing $g_{d-1}$ with $g_{d-1}\circ\mu_{m'}$ for some $m'\geq 1$ (then set $m_d:=m'm_{d-1}$). The obstruction for doing so lives in $H^d(K(\Z,k),\pi_{d-1}(F_{\nu}))$ (see \cite[Theorem 34.2]{Steenrod}). As $\pi_{d-1}(F_{\nu})$ is finite, this obstruction class can be killed after pullback by $\mu_{m'}$ for $m'$ sufficiently divisible (the argument appears at the end of \cite[Proof of Lemma~2.2]{BV}).
%CHANGE numbering when published.

If $d$ is the dimension of $T$, one can find a continuous map $\varphi:T\to K(\Z,k)_{\leq d}$ classifying $\alpha$, so $\varphi^*(\beta|_{K(\Z,k)_{\leq d}})=\alpha$. Set $f:=g_d\circ\varphi$ and $m:=m_d$. To conclude the proof, one computes that
$$f^*\omega=f^*\nu^*\beta=\varphi^*g_d^*\nu^*\beta=\varphi^*(\mu_{m_d}|_{K(\Z,k)_{\leq d}})^*\beta=m\varphi^*(\beta|_{K(\Z,k)_{\leq d}})=m\alpha.\eqno\qedhere$$
\end{proof}

Let $\Gr(r,N)$ be the Grassmannian of quotients of dimension $r$ of~$\C^N$, and let~$\cE_{r,N}$ denote the tautological rank $r$ vector bundle on~$\Gr(r,N)$.

\begin{lem}
\label{lemeven}
Let $T$ be a finite-dimensional CW complex.  Fix $r\geq 1$ and classes $\alpha_i\in H^{2i}(T,\Z)$ for $1\leq i\leq r$. Then there exist integers $m\geq 1$ and $N\geq r$, and a continuous map $f:T\to \Gr(r,N)$ such that $m\alpha_i=f^*c_i(\cE_{r,N})$ for $1\leq i\leq r$.
\end{lem}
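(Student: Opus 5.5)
The plan is to mimic the proof of Lemma \ref{lemodd}, with the sphere replaced by the classifying space $\BGL_r(\C)\simeq\colim_N\Gr(r,N)$ and $K(\Z,k)$ replaced by a product of Eilenberg--MacLane spaces. Set $\mathbb{K}:=\prod_{i=1}^rK(\Z,2i)$, with tautological classes $\beta_i$. For $m\geq 1$, let $\mu_m:\mathbb{K}\to\mathbb{K}$ be a cellular map inducing multiplication by $m$ on each factor, so that $\mu_m^*\beta_i=m\beta_i$. Let $\nu:\BGL_r(\C)\to\mathbb{K}$ classify $(c_1,\dots,c_r)$ of the universal bundle, and let $F_\nu$ be its homotopy fiber.

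First, I would determine the homotopy groups of $F_\nu$.
\begin{itemize}
\item We have $H^*(\BGL_r(\C),\Q)=\Q[c_1,\dots,c_r]$.
\item By \cite[Proposition 4 p.\,501]{Serrehomologie}, $H^*(\mathbb{K},\Q)$ is also a polynomial algebra on the $\beta_i$, since each $K(\Z,2i)$ is rationally a polynomial algebra on one even generator.
\item Hence $\nu$ is a rational cohomology isomorphism between simply connected spaces of finite type. By \cite[Th\'eor\`eme 3]{Serrehomotopie}, it induces an isomorphism on $\pi_*(-)\otimes\Q$.
\item Both spaces have finitely generated homotopy groups, so the long exact sequence of the fibration shows that all $\pi_i(F_\nu)$ are finite.
\item Since $\BGL_r(\C)$ and $\mathbb{K}$ are simply connected, $\pi_1(F_\nu)$ is a finite quotient of $\pi_2(\BGL_r(\C))=\Z\to\pi_2(\mathbb{K})=\Z$. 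This map is an isomorphism (via $c_1$), so $\pi_1(F_\nu)=0$.
\end{itemize}
There is no vanishing in a long range, but finiteness is all the argument needs.

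Next, as in Lemma \ref{lemodd}, I would show by induction on $d$ that there exist $m_d\geq1$ and $g_d:\mathbb{K}_{\leq d}\to\BGL_r(\C)$ with $\nu\circ g_d\simeq\mu_{m_d}|_{\mathbb{K}_{\leq d}}$.
\begin{itemize}
\item The obstruction to extending the lift over the $d$-cells lies in $H^d(\mathbb{K},\pi_{d-1}(F_\nu))$, by \cite[Theorem 34.2]{Steenrod}, applied to the fibration obtained by pulling back along $\mu_{m_{d-1}}$.
\item This group is finite. Moreover $\mu_{m'}^*$ acts on it in a way that kills it for $m'$ sufficiently divisible: rationally $\mu_{m'}^*$ multiplies each degree $j$ generator of $\prod\Z$ by $m'^{\,\cdot}$, and the finite coefficient groups have bounded exponent. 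This is the argument at the end of \cite[Proof of Lemma~2.2]{BV}.
\item The precomposition must be done compatibly with the cellular structure, replacing $g_{d-1}$ by $g_{d-1}\circ\mu_{m'}$. This requires $\mu_{m'}$ to preserve skeleta, which holds since it is cellular.
\end{itemize}
I expect this step to be the main obstacle. It requires checking carefully that pulling the obstruction class back by $\mu_{m'}$ kills it, which needs $H^d(\mathbb{K},A)$ with finite $A$ to be annihilated by $\mu_{m'}^*$ for suitable $m'$. I would first verify this on a single factor $K(\Z,2i)$ using the Künneth and universal coefficient theorems, and then pass to the product.

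Finally, let $d:=\dim T$ and let $\varphi:T\to\mathbb{K}_{\leq d}$ classify $(\alpha_1,\dots,\alpha_r)$, which is possible by cellular approximation. Since $T$ is finite-dimensional, $g_d\circ\varphi$ factors up to homotopy through some finite skeleton of $\BGL_r(\C)$, and hence through some $\Gr(r,N)$. This is because $\Gr(r,N)\to\BGL_r(\C)$ is $(2(N-r)+1)$-connected, so for $N\gg d$ one can compress maps from a $d$-dimensional complex. Let $f$ be this compressed map and set $m:=m_d$. The same computation as in Lemma \ref{lemodd} then gives $f^*c_i(\cE_{r,N})=\varphi^*\mu_m^*\beta_i=m\alpha_i$ for $1\leq i\leq r$.
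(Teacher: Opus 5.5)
Your proposal is correct and follows the paper's proof essentially step by step. You classify the Chern classes by a map $\BGL_r(\C)\to\prod_{i}K(\Z,2i)$, use Serre's rational theory together with the low-degree computation to see that its homotopy fiber is simply connected with finite homotopy groups, run the skeleton-by-skeleton obstruction argument of Lemma \ref{lemodd} (deferring the killing of obstructions by $\mu_{m'}^*$ to \cite{BV}), and compress into $\Gr(r,N)$ using the connectivity of $\Gr(r,N)\subset\BGL_r(\C)$.

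The one weak spot is your own heuristic for the killing step. Rational information cannot control torsion classes, so it does not by itself show that $\mu_{m'}^*$ kills $H^d(\prod_i K(\Z,2i),A)$ for finite $A$. A self-contained reason is that $\mu_{m'}^*$ multiplies each algebra generator $P^I\iota$ of $H^*(K(\Z,2i),\Z/p)$ by $m'$, so it kills reduced mod $p$ cohomology when $p\mid m'$. You then pass to products via K\"unneth and to arbitrary finite $A$ by filtering $A$. As in the paper, this step can also simply be taken from \cite{BV}.
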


\begin{proof}
Let $\BGL_r(\C)$ be the classifying space of complex vector bundles of rank $r$, constructed as the colimit of the $\Gr(r, N)$ for $N\geq r$. Let $\cE_r$ be the universal vector bundle on $\BGL_r(\C)$ (so $\cE_r|_{\Gr(r, N)}=\cE_{r,N}$). Let $\lambda:\BGL_r(\C)\to\prod_{1\leq i\leq r}K(\Z,2i)$ be a continuous map classifying the Chern classes of $\cE_r$.

As $H^*(\BGL_r(\C),\Z)=\Z[c_1(\cE_r),\dots,c_r(\cE_r)]$, and in view of the computation of~$H^*(\prod_{1\leq i\leq r}K(\Z,2i),\Q)$ in \cite[Proposition 4 p.\,501]{Serrehomologie}, the map $\lambda$ induces an isomorphism on $H^*(-,\Q)$, hence on $H_*(-,\Q)$, hence on~$\pi_*(-)\otimes{\Q}$ by \cite[Th\'eor\`eme 3]{Serrehomotopie}. It also induces an isomorphism on~$\pi_i(-)$ for $i\leq 2$, by the Hurewicz theorem. Letting~$F_{\lambda}$ be the homotopy fiber of $\lambda$, the long exact sequence of homotopy groups of a fibration shows that $\pi_i(F_{\lambda})$ is finite, and vanishes for~${i\in\{1,2\}}$. 

Arguing as in the proof of Lemma \ref{lemodd} (using the map $\lambda$ instead of $\nu$), one finds an integer $m\geq 1$ and a continuous map $f:T\to\BGL_r(\C)$ such that~$m\alpha_i=f^*c_i(\cE_{r})$ for~$1\leq i\leq r$. Since $T$ is finite-dimensional and the inclusion $\Gr(r,N)\subset\BGL_r(\C)$ is a $(2N-2r+1)$-equivalence, the map $f$ factorizes through $\Gr(r, N)$ for $N$ big enough, which concludes the proof of the lemma.
\end{proof}

\begin{prop}
\label{propconiveauOka}
Let $S$ be a finite-dimensional Stein space. Fix ${\alpha\in H^k(S,\Z)}$ for some~$k\geq 1$. Set $c:=\lfloor\frac{k}{2}\rfloor$. There exist $m\geq 1$, an algebraic variety $V$ over~$\C$ that is homogeneous under a linear algebraic group $G$, a vector bundle $\cE$ of rank $c$ on $V$, a section~${\sigma\in H^0(V,\cE)}$ transverse to the zero section with zero locus $j:V'\hookrightarrow V$, a class $\zeta\in H^{k-2c}((V')^{\an},\Z)$, and a continuous map~${f:S\to V^{\an}}$ with $m\alpha=f^*(j^{\an})_*\zeta$.
\end{prop}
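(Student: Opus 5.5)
We split according to the parity of $k$, using Lemma~\ref{lemodd} for odd $k$ and Lemma~\ref{lemeven} for even $k$. By \cite[Korollar]{Hamm}, $S$ has the homotopy type of a finite-dimensional CW complex $T$, so both lemmas apply to $S$ after composing with a homotopy equivalence. In each case we aim for an algebraic model $V$ that is homogeneous under a linear algebraic group, carrying a vector bundle $\cE$ of rank $c$ and a transverse section $\sigma$, such that the universal class is a Gysin pushforward $(j^{\an})_*\zeta$ from the zero locus $V'$.

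\emph{Odd case} $k=2c+1$. Lemma~\ref{lemodd} gives $m\geq 1$ and $f:S\to\bS^k$ with $m\alpha=f^*\omega$. We take the affine quadric model $V=\{x\in\A^{k+1}_\C : \sum x_i^2=1\}$, which is homogeneous under $\mathrm{SO}_{k+1}(\C)$. By the standard deformation retraction onto the real points, $V^{\an}$ is homotopy equivalent to $\bS^k$, so $f$ may be taken with values in $V^{\an}$. The class $\omega$ should be Poincaré dual to a closed submanifold of real codimension $k$ realized as the zero locus of a section of a rank $c$ bundle. Since $V^{\an}$ has real dimension $2k=4c+2$, that zero locus $V'$ has real dimension $2c+2$, and $\zeta$ lies in $H^{1}((V')^{\an},\Z)$.

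A concrete choice is the equivariant bundle $\cE:=V\times_{\mathrm{SO}_{k+1}}$ induced from a suitable representation, with $\sigma$ given by projecting onto $c$ of the coordinates. For instance, $\cE=\cO^c$ and $\sigma=(x_1+ix_2,\dots,x_{2c-1}+ix_{2c})$ make $V'$ an affine quadric in the remaining coordinates. We then need to check that $(j^{\an})_*$ maps $H^1((V')^{\an},\Z)\simeq\Z$ onto $H^k(V^{\an},\Z)\simeq\Z$ up to a nonzero multiple, which can be absorbed into $m$. I expect the main obstacle to be this verification that the Gysin map is nonzero, since a trivial bundle may give a zero pushforward. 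If it fails, I would instead use a homogeneous space such as a Stiefel variety, with $\cE$ a tautological bundle.

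\emph{Even case} $k=2c$. Apply Lemma~\ref{lemeven} with $r=c$, taking $\alpha_c=\alpha$ and $\alpha_i=0$ for $i<c$. This yields $f:S\to\Gr(c,N)$ with $m\alpha=f^*c_c(\cE_{c,N})$. Here $V=\Gr(c,N)$ is homogeneous under $\GL_N$, and $\cE=\cE_{c,N}$ is globally generated as a quotient of $\C^N$. A general section $\sigma$, coming from a vector of $\C^N$ (or its dual), is transverse to the zero section by Kleiman--Bertini. Its zero locus $V'$ has fundamental class $c_c(\cE)$, so taking $\zeta=1\in H^0((V')^{\an},\Z)$ gives $(j^{\an})_*1=c_c(\cE)$, which completes this case.
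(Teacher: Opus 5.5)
Your even case is the paper's argument: $\Gr(c,N)$ with its tautological quotient bundle, a general section, and $\zeta=1$.

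\textbf{Gap in the odd case.} The only real content of the odd case is that $(j^{\an})_*\colon H^1((V')^{\an},\Z)\to H^k(V^{\an},\Z)$ is nonzero. You leave this as an expectation, with an unspecified fallback (``a Stiefel variety with a tautological bundle''), so as written the odd case is not proved.

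Your reason for doubt is also misplaced. Triviality of $\cE$ only forces, via the projection formula $(j^{\an})_*(j^{\an})^*\eta=\eta\cdot c_c(\cE)$, the vanishing of $(j^{\an})_*$ on classes restricted from $V^{\an}$. For $c\geq 1$ your $\zeta$ is not such a class, since $H^1(V^{\an},\Z)=0$. Note that the paper's own $\cE$ is trivial too.

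\textbf{How to fill it.} The claim is true for your quadric and becomes easy after a change of coordinates. Set $u_j=x_{2j-1}+ix_{2j}$ and $v_j=x_{2j-1}-ix_{2j}$. Then $V=\{\sum_{j=1}^{c+1}u_jv_j=1\}$ and $V'=\{u_1=\dots=u_c=0\}$. Transversality holds because $u_{c+1}\neq 0$ on $V'$.

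The map $(u,v)\mapsto u$ is a locally trivial bundle $V\to\A^{c+1}_\C\setminus\{0\}$ with fibres $\A^c$. Under it, $V'$ and $V\setminus V'$ are the preimages of $\{0\}^c\times(\A^1\setminus\{0\})$ and $(\A^c\setminus\{0\})\times\A^1$ respectively. Hence $(V\setminus V')^{\an}$ is homotopy equivalent to $\bS^{2c-1}$. For $c\geq 1$, the outer terms of the Gysin sequence
\[
H^{2c}((V\setminus V')^{\an},\Z)\to H^{1}((V')^{\an},\Z)\xrightarrow{(j^{\an})_*}H^{2c+1}(V^{\an},\Z)\to H^{2c+1}((V\setminus V')^{\an},\Z)
\]
therefore vanish. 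So $(j^{\an})_*$ is an isomorphism, and no extra multiple is needed. The case $c=0$ is trivial.

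\textbf{Relation to the paper's choice.} Your data are exactly the pullback, along this affine bundle, of the paper's data: $V=\A^{c+1}_\C\setminus\{0\}$ homogeneous under $\GL_{c+1}$, $\cE=\cO_V^{\oplus c}$, $\sigma=(x_1,\dots,x_c)$ and $V'=\A^1_\C\setminus\{0\}$. There $\omega=(j^{\an})_*\zeta$ follows from the same Gysin sequence, with complement $(\C^c\setminus\{0\})\times\C$. The paper's model spares you the quadric, the retraction onto real points and the change of coordinates. Yours only adds affineness of $V$, which is not needed here.
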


\begin{proof}
Recall that $S$ has the homotopy type of a finite-dimensional CW complex (see \cite[Korollar]{Hamm}), so the use of Lemmas \ref{lemodd} and \ref{lemeven} below is legitimate.

Assume first that $k=2c+1\geq 1$ is odd. Endow $V:=\A^{c+1}_\C\setminus\{(0,\dots,0)\}$ with its natural action of $G:=\GL_{c+1,\C}$. Set $\cE:=\cO_V^{\oplus c}$ and ${\sigma:=(x_1,\dots,x_c)\in\cO(V)^{\oplus c}}$ (so~${V'=\A^1_{\C}\setminus\{0\}}$). The manifolds $(V')^{\an}$ and $V^{\an}$ have the homotopy type of~$\bS^{1}$ and~$\bS^k$ respectively, and we let $\zeta\in H^1((V')^{\an},\Z)$ and $\omega\in H^k(V^{\an},\Z)$ be the canonical generators (so $\omega=(j^{\an})_*\zeta$). Lemma~\ref{lemodd} shows that there exists a continuous map~$f:S\to V^{\an}$ such that $m\alpha=f^*\omega=f^*(j^{\an})_*\zeta$ for some~${m\geq 1}$.

Assume now that $k=2c\geq 2$ is even. By Lemma~\ref{lemeven} (applied with~${r=c}$ and $(\alpha_1,\dots,\alpha_r)=(0,\dots,0,\alpha)$), there exist $m\geq 1$ and $N\geq r$, as well as a continuous map~${f:S\to \Gr(r,N)}$, such that $m\alpha=f^*c_r(\cE_{r,N})$. Let $V$ and $\cE$ be the algebraic variety and the algebraic vector bundle whose analytifications are $\Gr(r,N)$ and~$\cE_{r,N}$. Note that $V$ is homogeneous under the natural action of~$G:=\GL_{N,\C}$. As~$\cE$ is globally generated, the Bertini theorem shows that a general section~${\sigma\in H^0(V,\cE)}$ is transverse to the zero section. Let~${j:V'\hookrightarrow V}$ be its zero locus. Set~${\zeta:=1\in H^0((V')^{\an},\Z)}$, so~${(j^{\an})_*\zeta=c_r(\cE_{r,N})}$ and~${m\alpha=f^*(j^{\an})_*\zeta}$.
\end{proof}

We define \textit{Koszul-regular immersions} of schemes as in \cite[Definition~\href{https://stacks.math.columbia.edu/tag/063J}{063J}]{SP}. A Koszul-regular immersion has \textit{codimension} $c$ if its conormal sheaf (see \cite[Definition~\href{https://stacks.math.columbia.edu/tag/01R2}{01R2}]{SP}) is locally free of rank $c$.
%in this ref, Koszul-regular is the same as smooth-locally regular, see 0692.

\begin{prop}
\label{propconiveauuptomultiple}
Let $S$ be a connected Stein manifold. Let $\Xi\subset S$ be a countable subset. Fix ${\alpha\in H^k(S,\Z)}$ for some~$k\geq 2$. Set $c:=\lfloor\frac{k}{2}\rfloor$. There exist $m\geq 1$, a Koszul-regular closed immersion $\iota: X\hookrightarrow\Spec(\cO(S))$ of codimension $c$, and a cohomology class~${\beta\in H^{k-2c}(X^{\an},\Z)}$, such that $X^{\an}\cap\Xi=\varnothing$, such that $\iota^{\an}:X^{\an}\hookrightarrow S$ is the inclusion of a submanifold of codimension $c$, and such that $m\alpha=(\iota^{\an})_*\beta$.
\end{prop}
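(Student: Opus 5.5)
Start from Proposition~\ref{propconiveauOka}. It gives $m\geq 1$, a homogeneous variety $V$ under a linear algebraic group $G$, a rank $c$ vector bundle $\cE$ with a section $\sigma$ transverse to the zero section and zero locus $j:V'\hookrightarrow V$, a class $\zeta\in H^{k-2c}((V')^{\an},\Z)$, and a continuous map $f:S\to V^{\an}$ with $m\alpha=f^*(j^{\an})_*\zeta$.

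\emph{Making $f$ holomorphic.} The variety $V^{\an}$ is a complex homogeneous manifold of a complex Lie group, so it is Oka (Grauert). By Theorem~\ref{thOka}, applied to the trivial fibration $S\times V^{\an}\to S$, we may replace $f$ by a homotopic holomorphic map. This leaves $f^*(j^{\an})_*\zeta$ unchanged.

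\emph{Achieving transversality and avoiding $\Xi$.} Next we move $f$ by the $G$-action. Let $G^{\an}$ act and consider $F:G^{\an}\times S\to V^{\an}$, $(g,s)\mapsto g\cdot f(s)$. Since $V$ is homogeneous, $F$ is a submersion. By the parametric transversality theorem, for $g$ outside a measure-zero subset of $G^{\an}$, the map $g\cdot f$ is transverse to $(V')^{\an}$.
\begin{itemize}
\item For each $\xi\in\Xi$, the set of $g$ with $g f(\xi)\in (V')^{\an}$ is a proper analytic subset of $G^{\an}$, since $V'\neq V$ and $G$ is connected (we may replace $G$ by $G^0$, which still acts transitively on the connected components of $V$).
\item $\Xi$ is countable, so a very general $g$ works for all $\xi$ simultaneously.
\item $G^{\an}$ is connected, so $g\cdot f$ is homotopic to $f$, and the cohomological identity is preserved.
\end{itemize}
Replace $f$ by $g\cdot f$. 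Then $Z:=f^{-1}((V')^{\an})$ is a closed submanifold of $S$ of codimension $c$ (possibly empty), disjoint from $\Xi$.

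\emph{Identifying the class.} By transversality, the normal bundle of $Z$ in $S$ is the pullback of that of $V'$ in $V$. The Thom class (Gysin) compatibility for transverse pullbacks then gives
\[ f^*(j^{\an})_*\zeta=(\iota^{\an})_*\beta, \qquad \beta:=(f|_Z)^*\zeta. \]

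\emph{Algebraizing $Z$.} The submanifold $Z$ is the zero locus of the holomorphic section $f^*\sigma$ of the holomorphic vector bundle $f^*\cE^{\an}$, and this section is transverse to the zero section. By Proposition~\ref{propmonoidaleq}, there is a rank $c$ vector bundle $\cF$ on $\Spec(\cO(S))$ with $\cF^{\an}\simeq f^*\cE^{\an}$. The section $f^*\sigma$ corresponds to some $s\in H^0(\Spec(\cO(S)),\cF)$. Let $X$ be the zero scheme of $s$, which is finitely presented; then $X^{\an}=Z$ as complex spaces, by compatibility of analytification with the equivalence. Locally, $s$ is given by $c$ functions whose analytic Koszul complex is exact, because they cut out a codimension $c$ submanifold transversally.

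\emph{Main obstacle: Koszul-regularity.} The remaining work is to show that $X\hookrightarrow\Spec(\cO(S))$ is Koszul-regular of codimension $c$. Concretely, we must show that the Koszul complex $\extp^\bullet\cF^\vee\to\cO$ attached to $s$ is exact, i.e.\ that its homology sheaves vanish. These homology sheaves are finitely presented quasi-coherent, and the functor $\cF\mapsto\cF^{\an}$ is exact. Moreover, the equivalence \eqref{monoidaleq} detects zero objects, so the homology sheaves vanish because the analytic Koszul complex is exact. Koszul-regularity then follows by choosing local trivializations of $\cF$ on a Zariski cover. Finally, the conormal sheaf is $\cF^\vee|_X$, which is locally free of rank $c$.
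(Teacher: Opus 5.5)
Your proposal is correct and follows the paper's proof essentially step by step. You use Oka theory to make $f$ holomorphic and a general $G^{\an}$-translate to get transversality to $j^{\an}$ and avoid $\Xi$; there you helpfully make explicit the countable-intersection argument and the fact that $g\cdot f$ stays homotopic to $f$, both of which the paper leaves implicit. You then use the transverse-pullback identity $m\alpha=(\iota^{\an})_*\beta$ and transport the Koszul complex through Proposition~\ref{propmonoidaleq}, as the paper does.

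The only wrinkle is your claim that the homology sheaves of the algebraic Koszul complex are finitely presented, which is not justified a priori over the non-noetherian ring $\cO(S)$. It is also unnecessary: the algebraic Koszul complex is the image of the exact analytic one under the exact functor $\cG\mapsto\widetilde{\cG(S)}$ (Cartan's Theorem~B), and this is exactly how the paper concludes.
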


\begin{proof}
Let $(m,V, G,\cE,\sigma,j,V', \zeta,f)$ be as in Proposition \ref{propconiveauOka}. Since $V^{\an}$ is Oka (see \cite[Proposition~5.6.1]{Forstneric}), we may choose $f$ holomorphic (see Theorem ~\ref{thOka}). As~$V^{\an}$ is $G^{\an}$\nobreakdash-homogeneous, the~$\ci$ map $S\times G^{\an}\to V^{\an}$ given by~$(s,g)\mapsto g\cdot f(s)$ is a submersion. By the transversality theorem \cite[p.\,68]{GP}, the map~${f_g:s\mapsto g\cdot f(s)}$ is transverse to $j^{\an}$ for almost all $g\in G^{\an}$. Replacing~$f$ with~$f_g$ for a well-chosen~${g\in G^{\an}}$, we may assume that $f$ is transverse to $j^{\an}$ and that~$V'\cap f(\Xi)=\varnothing$. 

Set $\cF:=f^*\cE^{\an}$ and ${\tau:=f^*\sigma^{\an}\in H^0(S,\cF)}$. By transversality of~$f$ and~$j^{\an}$, the section~$\tau$ is transverse to the zero section and its zero locus $h:T\hookrightarrow S$ is a codimension~$c$ submanifold of $S$. Define ${\beta:=(f|_T)^*\zeta\in H^{k-2c}(T,\Z)}$. Again by transversality of $f$ and $j^{\an}$, one has ${m\alpha=f^*(j^{\an})_*\zeta=h_*(f|_T)^*\zeta=h_*\beta}$.

Applying the equivalence of categories of Proposition \ref{propmonoidaleq} to $\cF$ and $\tau$ yields a vector bundle $\cG$ of rank $c$ on $\Spec(\cO(S))$ and a section $\upsilon\in H^0(\Spec(\cO(S)),\cG)$. Let~$\iota: X\hookrightarrow\Spec(\cO(S))$ be the zero locus of $\upsilon$. The Koszul complex
\begin{equation}
\label{KoszulT}
0\to\extp^{c}\cF^\vee\xrightarrow{\tau}\dots\xrightarrow{\tau}\extp^2\cF^\vee\xrightarrow{\tau}\cF^\vee\xrightarrow{\tau}\cO_S\to\cO_T\to 0
\end{equation}
associated with $\tau$ is exact (use \cite[Lemma~\href{https://stacks.math.columbia.edu/tag/062F}{062F}]{SP} in the local rings of $S$). 
Applying the above equivalence of categories to~\eqref{KoszulT} gives rise to the Koszul complex
\begin{equation}
\label{KoszulX}
0\to\extp^{c}\cG^\vee\xrightarrow{\upsilon}\dots\xrightarrow{\upsilon}\extp^2\cG^\vee\xrightarrow{\upsilon}\cG^\vee\xrightarrow{\upsilon}\cO_{\Spec(\cO(S))}\to\cO_X\to 0
\end{equation}
associated with $\upsilon$. Since this equivalence is exact (see Proposition \ref{propmonoidaleq}), the complex~\eqref{KoszulX} is a resolution of $\cO_X$. This means that $\iota$ is a Koszul-regular immersion, of codimension $c$ because the conormal sheaf of $\iota$ is $\cG^\vee|_X$.

It follows from Proposition \ref{propmonoidaleq} that \eqref{KoszulT} is the analytification of \eqref{KoszulX}, hence that $T=X^{\an}$ and $h=\iota^{\an}$. This completes the proof of the proposition.
\end{proof}

\subsection{Applying the comparison theorem}
\label{parapplcompa}

Recall that we denote by $\partial_m$ the boundary maps induced by the short exact sequence $0\to\Z\xrightarrow{m}\Z\to\Z/m\to 0$. The proof of the next proposition is complicated by the fact that the comparison morphism $H^{k-1}_{\et}(U,\Z/m)\to H^{k-1}(U^{\an},\Z/m)$ that appears in its statement may not be an isomorphism.

\begin{prop}
\label{propBockstein}
Let $S$ be a connected Stein manifold. Let $\Xi\subset S$ be a countable subset. Fix~$k\geq 2$ and $\alpha\in H^k(S,\Z)$. There exist $m\geq 1$, a dense open subset~$U$ of~$\Spec(\cO(S))$, and a class~$\tdelta\in H^{k-1}_{\et}(U,\Z/m)$ with image $\delta\in H^{k-1}(U^{\an},\Z/m)$ by the comparison morphism $H^{k-1}_{\et}(U,\Z/m)\to H^{k-1}(U^{\an},\Z/m)$, such that $\Xi\subset U^{\an}$ and~${\partial_m(\delta)=\alpha|_{U^{\an}}}$ in~$H^k(U^{\an},\Z)$.
\end{prop}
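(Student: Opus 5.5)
The plan is to take $U$ to be the complement of the locus produced by Proposition~\ref{propconiveauuptomultiple}, and to build $\tdelta$ from the cohomology of a constructible complex on $\Spec(\cO(S))$, where Theorem~\ref{thproperpf} applies.

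First apply Proposition~\ref{propconiveauuptomultiple} to $(S,\Xi,\alpha)$. This gives $m\geq 1$, a Koszul-regular closed immersion $\iota:X\hookrightarrow\Spec(\cO(S))$ of codimension $c\geq 1$, and $\beta\in H^{k-2c}(X^{\an},\Z)$, with $X^{\an}\cap\Xi=\varnothing$ and $m\alpha=(\iota^{\an})_*\beta$. Set $U:=\Spec(\cO(S))\setminus X$ and let $j:U\hookrightarrow\Spec(\cO(S))$ be the inclusion.

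Then $\Xi\subset U^{\an}$ and, since $(\iota^{\an})_*\beta$ vanishes off $X^{\an}$, the class $\alpha|_{U^{\an}}$ is $m$-torsion. Density of $U$ should follow from $X$ being cut out by a regular section of a rank $c\geq 1$ bundle on the spectrum of a domain. Indeed $\cO(S)$ is a domain since $S$ is a connected manifold, and $\upsilon\neq 0$ because $T=X^{\an}\neq S$.

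Next I would construct a constructible complex $\mathbb{C}$ on $\Spec(\cO(S))$ that replaces $\RR j_*\Z/m$. The idea is to define an algebraic Gysin (Thom) morphism
$$\mathrm{Gys}:\iota_*\Z/m(-c)[-2c]\to\Z/m$$
on $\Spec(\cO(S))_{\et}$ and to let $\mathbb{C}$ be its cone. Concretely, I want a Thom class in $H^{2c}_X(\Spec(\cO(S)),\Z/m(c))$ attached to the pair $(\cG,\upsilon)$.

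To get it, I would descend $(\cG,\upsilon)$ by a limit argument (\cite[Theorem~\href{https://stacks.math.columbia.edu/tag/09YQ}{09YQ}]{SP}) to a pair $(\cG_0,\upsilon_0)$ on a smooth affine variety $W$ over $\C$ whose zero locus $X_0$ is smooth of codimension $c$. There one has Gabber's absolute purity, and the Thom class can be pulled back; the Koszul regularity of $\iota$ guarantees that $X=X_0\times_W\Spec(\cO(S))$ has the expected codimension. Since $\iota$ is finite of finite presentation, $\iota_*\Z/m$ is constructible, hence so is $\mathbb{C}$. The composite $j^*\mathrm{Gys}$ vanishes because the support is disjoint from $U$, so there is a natural morphism $\mathbb{C}\to\RR j_*\Z/m$.

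The key comparisons come next. Since $\iota$ is proper and of finite presentation, Theorem~\ref{thproperpf} (in the form of Proposition~\ref{propdevissage} for $\iota_*\Z/m$, using finiteness of $\iota$) and Theorem~\ref{thcompaS} give isomorphisms on the two outer terms of the triangle defining $\mathbb{C}$. The five lemma then yields
$$H^{k-1}_{\et}(\Spec(\cO(S)),\mathbb{C})\isoto H^{k-1}(S,\mathbb{C}^{\an}).$$
On the analytic side, $\mathrm{Gys}^{\an}$ should be the topological Gysin map of the codimension $c$ submanifold $X^{\an}\subset S$, because Thom classes are compatible with analytification and the analytic Thom class is the one attached to $(\cF,\tau)$. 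Topological purity then identifies $\mathbb{C}^{\an}\isoto\RR j^{\an}_*\Z/m$, so
$$H^{k-1}(S,\mathbb{C}^{\an})\cong H^{k-1}(U^{\an},\Z/m).$$

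To conclude, choose $\delta\in H^{k-1}(U^{\an},\Z/m)$ with $\partial_m(\delta)=\alpha|_{U^{\an}}$, which exists by $m$-torsion. Lift $\delta$ through the isomorphisms above to a class in $H^{k-1}_{\et}(\Spec(\cO(S)),\mathbb{C})$, and map it to $\tdelta\in H^{k-1}_{\et}(U,\Z/m)$ via $\mathbb{C}\to\RR j_*\Z/m$. Commutativity of the square formed by the comparison morphisms and $\mathbb{C}\to\RR j_*\Z/m$ shows that $\tdelta$ maps to $\delta$.

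The main obstacle is the middle step. It requires defining the algebraic Gysin morphism for a Koszul-regular immersion into the non-noetherian scheme $\Spec(\cO(S))$, and checking that its analytification is the topological Gysin map, so that $\mathbb{C}^{\an}\simeq\RR j^{\an}_*\Z/m$. I would first attempt the descent to a smooth algebraic model described above. If that fails, I would fall back on defining the class as a Chern class of $\cG$ on a deformation to the normal bundle.
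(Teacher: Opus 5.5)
Your proposal is correct and is essentially the paper's argument. It uses the same $U$, the same cone of an algebraic Gysin morphism $\iota_*\Z/m[-2c]\to\Z/m$ (your $\mathbb{C}$, the paper's $\LL$), and the same application of Theorem~\ref{thproperpf} with the five lemma; the paper builds the Thom class by Riou's construction for Koszul-regular immersions and checks its analytification locally by reduction to codimension~$1$ (Lemma~\ref{lemThomanal}). One caveat on your route to the Thom class: a plain limit argument will not give you a smooth model $W$ with smooth $X_0$, since it is not known that $\cO(S)$ is a filtered colimit of smooth algebras. What does work is to pull back, along the map defined by $(\cG,\upsilon)$, the Thom class of the zero section of the tautological bundle over a Grassmannian; the analytic identification then comes from transversality of $\tau$, not from Koszul regularity. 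The only real divergence from the paper is the last step. You identify the analytified cone with $\RR j^{\an}_*\Z/m$ by topological purity and lift an arbitrary Bockstein preimage $\delta$ of $\alpha|_{U^{\an}}$. The paper instead produces a specific $\delta$ from the integral triangle $\K$ built out of $m\alpha=(\iota^{\an})_*\beta$, which only needs $\overline{\K}|_{U^{\an}}=\Z/m$. Both routes work.
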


\begin{proof}
Set $c:=\lfloor\frac{k}{2}\rfloor$. Let $m$, as well as $\iota:X\to \Spec(\cO(S))$ and $\beta\in H^{k-2c}(X^{\an},\Z)$ be as in Proposition~\ref{propconiveauuptomultiple}. Set $U:=\Spec(\cO(S))\setminus X$, so $U^{\an}=S\setminus X^{\an}$.

Let $\Cl^{\Z}_{\iota^{\an}}:\Z[-2c]\isoto(\iota^{\an})^!\Z$ be the Thom isomorphism in $D^+(X^{\an},\Z)$. After pushforward by $\iota^{\an}$ and composition with the natural morphism ${(\iota^{\an})_*(\iota^{\an})^! \Z\to\Z}$, it gives rise to the Gysin morphism $\Gys^\Z_{\iota^{\an}}:(\iota^{\an})_*\Z[-2c]\to\Z$ in $D^+(S,\Z)$. Let~${\K\in D^+(S,\Z)}$ be defined by the distinguished triangle
\begin{equation}
\label{triangleanZ}
\Z\oplus(\iota^{\an})_*\Z[-2c]\xrightarrow{(m,\Gys^\Z_{\iota^{\an}})}\Z\to\K\xrightarrow{+1}.
\end{equation}
As $m\alpha=(\iota^{\an})_*\beta$ (see Proposition~\ref{propconiveauuptomultiple}), the class $(\alpha,-\beta)\in H^k(S, \Z\oplus(\iota^{\an})_*\Z[-2c])$ lifts, in the long exact sequence of cohomology of \eqref{triangleanZ}, to a class $\gamma\in H^{k-1}(S,\K)$. Since the restriction of \eqref{triangleanZ} to $U^{\an}$ reads
\begin{equation}
\label{ZZZm}
\Z\xrightarrow{m}\Z\to\Z/m\xrightarrow{+1},
\end{equation}
one has $\K|_{U^{\an}}=\Z/m$, and $\delta:=\gamma|_{U^{\an}}\in H^{k-1}(U^{\an},\Z/m)$ satisfies ${\partial_m(\delta)=\alpha|_{U^{\an}}}$. 
It remains to show that $\delta$ lifts to a class $\tilde{\delta}\in H^{k-1}_{\et}(U,\Z/m)$. 

Let $\Cl_{\iota^{\an}}:\Z/m[-2c]\isoto(\iota^{\an})^!\Z/m$ be the modulo $m$ Thom isomorphism in~$D^+(X^{\an},\Z/m)$, and denote by $\Gys_{\iota^{\an}}:(\iota^{\an})_*\Z/m[-2c]\to\Z/m$ the induced Gysin morphism in~$D^+(S,\Z/m)$. 
Consider a morphism
\begin{equation}
\label{morphtriangles}
\begin{aligned}
\xymatrix@C=3.5em@R=1em{
\Z\oplus(\iota^{\an})_*\Z[-2c]\ar^{(0,\rho)}[d]\ar^{\hspace{3.2em}(m,\Gys^\Z_{\iota^{\an}})}[r]&\Z\ar[r]\ar^{\rho}[d]&\K\ar^{\rho_\K}[d]\ar^{+1}[r]&\\
(\iota^{\an})_*\Z/m[-2c]\ar^{\hspace{2em}\Gys_{\iota^{\an}}}[r]&\Z/m\ar[r]&\overline{\K}\ar^{+1}[r]&
}
\end{aligned}
\end{equation}
of distinguished triangles in $D^+(S,\Z)$ whose upper row is \eqref{triangleanZ}, where $\rho$ denotes reduction modulo $m$ morphisms, where $\overline{\K}$ is defined by the bottom row of \eqref{morphtriangles}, and where $\rho_\K:\K\to\overline{\K}$ is constructed using axiom (TR3) of triangulated categories.
Set~${\bar{\gamma}:=\rho_\K(\gamma)\in H^{k-1}(S,\overline{\K})}$. The restriction of \eqref{morphtriangles} to $U^{\an}$ reads
\begin{equation}
\label{morphtrianglesU}
\begin{aligned}
\xymatrix@C=3.5em@R=1em{
\Z\ar^{}[d]\ar^{m}[r]&\Z\ar^{\rho}[r]\ar^{\rho}[d]&\Z/m\ar^{\Id}[d]\ar^{+1}[r]&\\
0\ar^{}[r]&\Z/m\ar^{\Id}[r]&\Z/m\ar^{+1}[r]&.
}
\end{aligned}
\end{equation}
It follows from \eqref{morphtrianglesU} that $\bar{\gamma}|_{U^{\an}}=\gamma|_{U^{\an}}=\delta$ in $H^{k-1}(U^{\an},\Z/m)$.

Let $\Cl_{\iota}:\Z/m[-2c]\to \iota^! \Z/m$ be the morphism in $D^+(X_\et,\Z/m)$ defined in \cite[D\'efinition 2.3.1]{Riou} (we consider a shift of the morphism in \emph{loc}.\kern3pt \emph{cit}., we identify~$\Z/m$ with the Tate twist $\Z/m(1)$ by sending $1$ to $e^{\frac{2i\pi}{m}}$, and we note that the regular immersions of \cite{Riou} are our Koszul-regular immersions). Consider the induced Gysin morphism $\Gys_{\iota}:\iota_*\Z/m[-2c]\to\Z/m$ in~$D^+(\Spec(\cO(S))_{\et},\Z/m)$. Let~${\LL\in D^+(\Spec(\cO(S))_{\et},\Z/m)}$ be defined by the distinguished triangle
\begin{equation}
\label{trianglealg}
\iota_*\Z/m[-2c]\xrightarrow{\Gys_{\iota}}\Z/m\to \LL\xrightarrow{+1}.
\end{equation}

\begin{lem}
\label{lemThomanal}
The bottom row of \eqref{morphtriangles} is the analytification of \eqref{trianglealg} in the sense of \S\ref{parGroth}. In particular, one has $\LL^{\an}=\overline{\K}$.
\end{lem}

\begin{proof}
It suffices to verify that the analytification of~$\Cl_{\iota}$ (which we view as an element of~$H^{2c}_{\et,X}(\Spec(\cO(S)),\Z/m)$) is equal to~$\Cl_{\iota^{\an}}$ (viewed in~$H^{2c}_{X^{\an}}(S,\Z/m)$). By uniqueness of Thom classes, we may work locally at some point ${s\in X^{\an}\subset S}$. 

Let~$U\subset\Spec(\cO(S))$ be an affine open neighborhood of $s$ in which $X$ is defined by a Koszul-regular sequence $(f_j)_{1\leq j\leq c}$ in $\cO(U)$. For $1\leq j\leq c$, consider the codimension $1$ Koszul-regular immersion $\iota_j:D_j:=\{f_j=0\}\hookrightarrow U$. Let~${\Omega\subset U^{\an}}$  be an open neighborhood of $s$ such that the $(D_j^{\an}\cap\Omega)_{1\leq j\leq c}$ are codimension $1$ complex submanifolds of $\Omega$ intersecting transversally along $X^{\an}\cap\Omega$.

Let $\Cl_{\iota_j}\in H^2_{\et,D_j}(U,\Z/m)$ be the class of $\iota_j$ (see \cite[D\'efinition 2.3.1]{Riou}) and let~$\Cl_{\iota_j^{\an}|_{\Omega}}\in H^2_{D_j^{\an}\cap\Omega}(\Omega,\Z/m)$ be the modulo $m$ Thom class of~${\iota_j^{\an}|_{\Omega}:D_j^{\an}\cap\Omega\hookrightarrow\Omega}$.
Then $\Cl_{\iota}|_U$ is the product of the $(\Cl_{\iota_j})_{1\leq j\leq c}$ by \cite[Remarque~2.3.6]{Riou}, and a topological computation shows that $\Cl_{\iota^{\an}}|_{\Omega}$ is the product of the $(\Cl_{\iota_j^{\an}|_{\Omega}})_{1\leq j\leq c}$. It remains to show that the restriction to $\Omega$ of the analytification of $\Cl_{\iota_j}$ is $\Cl_{\iota_j^{\an}|_{\Omega}}$. This follows from the concrete description of $\Cl_{\iota_j}$ given in \mbox{\cite[\S 2.1]{Riou}} and from the similar description of Thom classes of codimension $1$ complex submanifolds.
\end{proof}

We resume the proof of Proposition \ref{propBockstein}. As the closed immersion $\iota$ is Koszul-regular, it is of finite presentation (to see it, use local Koszul resolutions), so the \'etale sheaf $\iota_*\Z/m$ is constructible (see \cite[Lemma~\href{https://stacks.math.columbia.edu/tag/095R}{095R}]{SP}). In view of \eqref{trianglealg}, Theorem~\ref{thproperpf} (applied on~$X=\Spec(\cO(S))$ to the constructible sheaves $\iota_*\Z/m$ and $\Z/m$) and the five lemma imply that the comparison morphism
$$H^{k-1}_{\et}(\Spec(\cO(S)),\LL)\to H^{k-1}(S,\LL^{\an})=H^{k-1}(S,\overline{\K})$$
(where we used Lemma \ref{lemThomanal}) is an isomorphism. Let~$\tilde{\gamma}\in H^{k-1}_{\et}(\Spec(\cO(S)),\LL)$ be the preimage of $\bar{\gamma}$ by this isomorphism. The restriction of \eqref{trianglealg} to $U$ reads 
\begin{equation}
\label{trianglealgU}
0\xrightarrow{}\Z/m\xrightarrow{\Id}\Z/m\xrightarrow{+1}.
\end{equation}
Define $\tdelta:=\tgamma|_U\in H^{k-1}_{\et}(U,\Z/m)$. Then the image of $\tdelta$ by the comparison morphism $H^{k-1}_{\et}(U,\Z/m)\to H^{k-1}(U^{\an},\Z/m)$ is $\bar{\gamma}|_{U^{\an}}=\delta$. The proof is complete.
\end{proof}

\subsection{Coniveau of integral cohomology classes}
\label{parconiveau}

We now state and prove the main result of this section.

\begin{thm}
\label{thintegral}
Let $S$ be a finite-dimensional Stein space. Let $\Xi\subset S$ be a countable subset. For all~$k\geq 2$ and~$\alpha\in H^k(S,\Z)$, there exists a nowhere dense closed analytic subset~$Z\subset S$ such that $Z\cap\Xi=\varnothing$ and $\alpha|_{S\setminus Z}=0$ in~$H^k(S\setminus Z,\Z)$. 
\end{thm}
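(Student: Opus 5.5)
First reduce to $S$ a connected Stein manifold, and then run the strategy of \S\ref{parsketchconiveau}. The reduction goes as follows. Since $S$ is finite-dimensional, its singular locus $S_{\mathrm{sing}}$ of $S^{\red}$ is a nowhere dense closed analytic subset. Removing it is not allowed directly, because it may meet $\Xi$ and $S\setminus S_{\mathrm{sing}}$ is not Stein. Instead, I would use a resolution-free trick: replace $S$ by its normalization or, more simply, argue by induction on $\dim S$ using the stratification by singular loci. A cleaner route is to note that the proof of Proposition~\ref{propBockstein} only needs $S$ to be a manifold at the step where Proposition~\ref{propconiveauuptomultiple} produces a transverse zero locus. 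For a general $S$ I would instead take a proper holomorphic embedding $S\hookrightarrow\C^N$ (finite-dimensionality). I would also choose a Stein neighbourhood retracting onto $S$ (Siu's theorem), apply the manifold case there to a class extending $\alpha$, and then intersect $Z$ with $S$, choosing the transverse data generically so that $Z\cap S$ is nowhere dense in every irreducible component of $S$. Components and connected components are handled separately, since there are countably many, and a locally finite union of nowhere dense analytic sets is still one.

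In the manifold case, apply Proposition~\ref{propBockstein} to get $m$, a dense open $U\subset\Spec(\cO(S))$ with $\Xi\subset U^{\an}$, and $\tdelta\in H^{k-1}_{\et}(U,\Z/m)$ whose image $\delta$ satisfies $\partial_m(\delta)=\alpha|_{U^{\an}}$. Shrinking $U$ to a principal open $\{a\neq0\}$ containing $\Xi$ keeps these properties, since avoiding a countable set is possible by choosing $a$ generically. So $U=\Spec(\cO(S)[\frac1a])$ is affine, and $\cO(U)$ is a filtered colimit of finitely generated $\C$-algebras. By the limit theorem \cite[Theorem~\href{https://stacks.math.columbia.edu/tag/09YQ}{09YQ}]{SP}, $\tdelta$ is the pullback of a class $\tilde\epsilon\in H^{k-1}_{\et}(V,\Z/m)$ along a morphism $g:U\to V$ with $V$ an affine variety over $\C$. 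Analytifying gives a holomorphic map $g^{\an}:U^{\an}\to V^{\an}$ with $\delta=(g^{\an})^*\epsilon$, where $\epsilon$ is the image of $\tilde\epsilon$ under Artin's comparison. Hence $\alpha|_{U^{\an}}=(g^{\an})^*\partial_m(\epsilon)$, and $\partial_m(\epsilon)$ is torsion.

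Now apply Theorem~\ref{thCTV} to $V$, the torsion class $\partial_m(\epsilon)$, and the countable set $g^{\an}(\Xi)$. This gives a dense open $V'\subset V$ containing $g^{\an}(\Xi)$ on which the class vanishes. Then $W:=g^{-1}(V')\subset U$ is open, contains $\Xi$, and $\alpha|_{W^{\an}}=0$. Set $Z:=S\setminus W^{\an}$. This is a closed analytic subset avoiding $\Xi$.

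The main obstacle is nowhere density: $g^{-1}(V')$ could be empty if $g$ lands in $V\setminus V'$. I would deal with this by choosing $V$ to be the closure of the image, i.e.\ taking $V$ with $\cO(V)\to\cO(U)$ injective (replace $V$ by the scheme-theoretic image, which is still of finite type). Then the dominant map sends generic points to generic points, so $g^{-1}(V')$ is dense in $U$ because $S$ is connected and $\cO(S)$ is a domain. Hence $Z$ is contained in the proper zero set of a nonzero function on each component, and it is nowhere dense. If $V$ is not irreducible, Theorem~\ref{thCTV} still yields $V'$ dense in every component, which suffices.
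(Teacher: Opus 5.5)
Your manifold case is the paper's argument. You apply Proposition~\ref{propBockstein}, shrink $U$ to an affine open still containing $\Xi$, descend $\tdelta$ to an integral affine variety $V$ with $\cO(V)\subset\cO(U)$ by the limit theorem, apply Theorem~\ref{thCTV} to $\partial_m(\epsilon)$ and $g^{\an}(\Xi)$, and get density of $g^{-1}(V')$ from dominance of $g$. Taking $Z:=S\setminus W^{\an}$ rather than a hypersurface $\{a=0\}$ is harmless. Indeed $V'$ is quasi-compact, so $W$ is a finite union of principal opens of $\Spec(\cO(S))$, and $Z$ is the common zero locus of finitely many holomorphic functions.

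The reduction to the manifold case needs repair in two places.

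First, a finite-dimensional Stein space need not admit a proper holomorphic embedding into a fixed $\C^N$: it may be nonreduced, and even $S^{\red}$ can have unbounded local embedding dimension. What you actually need is what the paper quotes from \cite[Lemma~2.1]{Stein}: a Stein manifold $S'$, a holomorphic map $i:S\to S'$, and a continuous retraction $r:S'\to S$ of $i$.

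Second, and more importantly, ``choosing the transverse data generically'' does not justify nowhere density of $Z:=i^{-1}(Z')$. The set $Z'$ produced by the manifold case is governed by the open subset $V'$ that Theorem~\ref{thCTV} gives on an auxiliary variety. No transversality choice in Proposition~\ref{propconiveauuptomultiple} prevents $Z'$ from containing $i(S_j)$ for some irreducible component $S_j$ of $S$. Also, unlike connected components, irreducible components cannot be handled separately, since they meet. The only control you have is the countable set, and that is exactly what the paper uses. Choose a point $s_j$ in each of the countably many irreducible components $S_j$ of $S$, and apply the manifold case to $r^*\alpha$ with $\Xi':=i(\Xi)\cup\{i(s_j)\}_j$. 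Then $Z\cap\Xi=\varnothing$, and no $S_j$ is contained in $Z$, so $Z$ is nowhere dense. Finally $\alpha|_{S\setminus Z}=(i|_{S\setminus Z})^*\bigl(r^*\alpha|_{S'\setminus Z'}\bigr)=0$. With these changes your proof coincides with the paper's.
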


\begin{proof}
We first reduce to the case where $S$ is a manifold. By \cite[Lemma~2.1]{Stein}, there exist a Stein manifold~$S'$, a holomorphic map $i:S\to S'$ and a continuous retraction~${r:S'\to S}$ of $i$. Set $\alpha':=r^*\alpha\in H^k(S',\Z)$. Let~$\Xi'\subset S'$ be the union of~$i(\Xi)$ and of one point in the image by $i$ of each irreducible component of $S$. By the manifold case of Theorem \ref{thintegral}, there is a nowhere dense closed analytic subset~$Z'\subset S'$ with~${Z'\cap\Xi'=\varnothing}$ and $\alpha'|_{S'\setminus Z'}=0$ in~$H^k(S'\setminus Z',\Z)$. To conclude, define~${Z:=i^{-1}(Z')}$.

Assume now that~$S$ is a Stein manifold. We may furthermore assume that~$S$ is connected.  Let~$(m, U, \tdelta, \delta)$ be as in Proposition~\ref{propBockstein}. After shrinking $U$, we may assume that $U$ is affine. Writing $\cO(U)$ as the directed colimit of its sub\nobreakdash-$\C$\nobreakdash-algebras of finite type and applying \cite[Theorem~\href{https://stacks.math.columbia.edu/tag/09YQ}{09YQ}]{SP}, one finds an integral affine algebraic variety~$V$ over~$\C$, a morphism of $\C$\nobreakdash-schemes~${g:U\to V}$, and a class~${\tvarepsilon\in H^{k-1}_{\et}(V,\Z/m)}$ with~$g^*\tvarepsilon=\tdelta$. Let $g^{\an}:U^{\an}\to V^{\an}$ be the holomorphic map given by the universal property of~$V^{\an}$ (see \cite[XII, Th\'eor\`eme~1.1]{SGA1}).

Let $\varepsilon\in H^{k-1}(V^{\an},\Z/m)$ be the image of the class $\tvarepsilon$ by the comparison morphism~$H^{k-1}_{\et}(V,\Z/m)\to H^{k-1}(V^{\an},\Z/m)$, so $(g^{\an})^*\varepsilon=\delta$. By Theorem \ref{thCTV}, there exists a dense open subset $V'\subset V$ with $g^{\an}(\Xi)\subset(V')^{\an}$ and $\partial_m(\varepsilon)|_{(V')^{\an}}=0$ in~$H^k((V')^{\an},\Z)$. Set $U':=g^{-1}(V')$, so $(U')^{\an}=(g^{\an})^{-1}((V')^{\an})$. One computes
$$\alpha|_{(U')^{\an}}=\partial_m(\delta)|_{(U')^{\an}}=(g^{\an}|_{(U')^{\an}})^*(\partial_m(\varepsilon)|_{(V')^{\an}})=0\textrm{ in } H^k((U')^{\an},\Z).$$

Note that $U'$ is a dense open subset of $U$, hence also of $\Spec(\cO(S))$. Let~${a\in\cO(S)}$ be a nonzero element that vanishes on the complement of $U'$ in $\Spec(\cO(S))$, but that is nonzero at all points of $\Xi$. Define~$Z:=\{a=0\}\subset S$. As $S\setminus Z\subset (U')^{\an}$, the class~$\alpha|_{S\setminus Z}\in H^k(S\setminus Z,\Z)$ vanishes, which concludes the proof of the theorem.
\end{proof}

\begin{rem}
The analogue of Theorem \ref{thintegral} with finite coefficients does not hold in general, even for $k=2$. Indeed, if $S$ is a Stein manifold and $\alpha\in H^2(S,\Z/m)$ vanishes on the complement of a nowhere dense closed analytic subset $Z$ of $S$, then~$\alpha$ is the class of a divisor of $S$ supported on $Z$, and hence is the reduction modulo~$m$ of a class in $H^2(S,\Z)$. As a consequence, any $\alpha$ that does not lift to an integral cohomology class cannot vanish on the complement of such a $Z$.
%also in higher degree (for sure degree 3 and m=2) using Steenrod operations.
%with coefficients Q, fails for k=2 (for classes not coming from H^*(-,Z)_Q, but seems OK for higher degree (at least odd) by constructing over bigger and bigger compact subsets of S morphisms to C^l-0 that are compatible upto raising the last coordinate to a power (and up to small perturbation).
\end{rem}

\subsection{Cohomology and unramified cohomology of Stein spaces}
\label{parunramified}

Let $S$ be a Stein space. For $k\geq 0$, define $\cH^k_S(\Z)$ to be the Zariski sheaf on $\Spec(\cO(S))$ associated with the presheaf $U\mapsto H^k(U^{\an},\Z)$. Equivalently, one has ${\cH^k_S(\Z)=\RR^ki_*\Z}$, where~${i:S\to\Spec(\cO(S))}$ is the canonical continuous morphism from $S$ (with its usual topology) to $\Spec(\cO(S))$ (with the Zariski topology). We define the \textit{unramified cohomology groups} of $S$ with integral coefficients to be $$H^k_{\nr}(S,\Z):=H^0(\Spec(\cO(S)),\cH^k_S(\Z)).$$

\begin{thm}
\label{thunramified}
Let $S$ be a finite-dimensional Stein space. For all $k\geq 2$, the natural morphism $H^k(S,\Z)\to H^k_{\nr}(S,\Z)$ vanishes identically. 
\end{thm}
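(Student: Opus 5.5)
The plan is to show that the image of any $\alpha\in H^k(S,\Z)$ in $H^0(\Spec(\cO(S)),\cH^k_S(\Z))$ vanishes. Since $\cH^k_S(\Z)$ is a sheaf, it suffices to check that this image vanishes in every stalk, i.e.\ at every point $x\in\Spec(\cO(S))$. So we fix $x$ and look for a Zariski open neighborhood $W$ of $x$ in $\Spec(\cO(S))$ with $\alpha|_{W^{\an}}=0$ in $H^k(W^{\an},\Z)$. Theorem \ref{thintegral} produces such vanishing on complements $S\setminus Z$ of analytic subsets, and the countable set $\Xi$ in that statement lets us keep chosen points out of $Z$. The difficulty is that a point $x$ of $\Spec(\cO(S))$ need not be a point of $S$: it can be a non-closed prime, or a closed point not coming from $S$, such as one defined by a free ultrafilter. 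We therefore need an open set of the form $D(a)=\Spec(\cO(S)[\frac{1}{a}])$ with $a\notin x$ and $\alpha|_{D(a)^{\an}}=0$.

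First, we would revisit the proof of Theorem \ref{thintegral} in the manifold case. It produced a dense open $U'\subset\Spec(\cO(S))$ with $\alpha|_{(U')^{\an}}=0$, and the only constraint used was $\Xi\subset(U')^{\an}$. We would strengthen this to a constraint of the form $x\in U'$. The construction in Proposition \ref{propBockstein} has $U=\Spec(\cO(S))\setminus X$, and Theorem \ref{thCTV} then gives $U'=g^{-1}(V')$. So we need two things: first, that $X$ avoids $x$; second, that $V'$ contains $g(x)$, a scheme point of $V$. For the second, Theorem \ref{thCTV} and the Bloch--Ogus--Gabber effacement (Remark \ref{remBO}) already allow a finite set of scheme points, and for $V$ of finite type over $\C$ one can avoid any given point. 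For the first, note that $X$ is the zero locus of the section $\upsilon$ from Proposition \ref{propconiveauuptomultiple}. There the section was moved by a general element $g\in G^{\an}$ so as to avoid the countable set $\Xi$. We would want to choose $g$ so that, in addition, $\upsilon$ does not vanish at $x$.

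Here is a cleaner way to handle that. Any prime $x$ lies in a maximal ideal, and it suffices to treat the case where $x$ is a closed point, because a neighborhood of a closed specialization contains the generization. A closed point is either evaluation at some $s\in S$, which is handled by $\Xi=\{s\}$, or a ``point at infinity''. In the latter case we would argue that, for a general $g$, the zero locus $X$ contains no such point: the vanishing ideal of $T=X^{\an}$ is generated by finitely many functions, by Proposition \ref{propmonoidaleq}, and their common zeros in $\Spec(\cO(S))$ reduce to the points of $T$ when $T$ is compact in suitable directions. This is not obviously true. A more robust route is to reduce everything to the countable-point case by an equivalent reformulation: show that every Zariski open $W\ni x$ contains some $D(a)$ with $a$ nonvanishing at prescribed points.

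Finally, to get from $S$ a Stein space to $S$ a manifold, we would use the retraction $S\to S'\to S$ from \cite[Lemma 2.1]{Stein} and pull back along $i$, as in the proof of Theorem \ref{thintegral}. I expect the main obstacle to be controlling non-geometric points of $\Spec(\cO(S))$, i.e.\ forcing the analytic subset $Z$ to avoid an arbitrary prime rather than countably many points of $S$. The first thing I would try is to prove that, for $Z=\{a=0\}$, the set of primes containing $a$ can be arranged to avoid $x$ by multiplying the choices made so far by general elements. This would be done via the transversality freedom in Proposition \ref{propconiveauuptomultiple} together with the freedom in choosing $a$ at the end of the proof of Theorem \ref{thintegral}.
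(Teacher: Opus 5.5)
There is a genuine gap. Your reduction is right: a section of $\cH^k_S(\Z)$ vanishes if all its germs do. So for each $x\in\Spec(\cO(S))$, with prime $\kp$, you need some $a\notin\kp$ with $\alpha|_{S\setminus\{a=0\}}=0$. But the proposal never produces such an $a$ when $x$ does not come from $S$.

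The missing idea is a lemma that converts an arbitrary prime into countably many points of $S$ (Lemma~\ref{lemideals} in the paper): \emph{for every ideal $I\subsetneq\cO(S)$ there is a discrete subset $\Xi\subset S$ such that every element of $I$ vanishes at some point of $\Xi$.}

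To prove it, choose $a_1,\dots,a_r\in I$ such that $T:=\{a_1=\dots=a_r=0\}$ has minimal dimension; this is possible since $\dim S<\infty$. Then $T\neq\varnothing$: otherwise $\cO_S^{\oplus r}\xrightarrow{(a_1,\dots,a_r)}\cO_S$ is surjective, and since $S$ is Stein it stays surjective on global sections, giving $1\in I$. Let $\Xi$ contain one point of each irreducible component of $T$. These components form a locally finite family, so $\Xi$ is discrete, hence countable. If some $a\in I$ were nonzero at every point of $\Xi$, then $T\cap\{a=0\}$ would contain no component of $T$. It would then have smaller dimension than $T$, contradicting minimality.

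With this lemma you need not reopen the proof of Theorem~\ref{thintegral}; use it exactly as stated. Apply it with this $\Xi$ (for $I=\kp$) to get $Z$ with $Z\cap\Xi=\varnothing$ and $\alpha|_{S\setminus Z}=0$. Since $S$ is Stein and $\Xi$ is discrete and disjoint from $Z$, pick $a\in\cO(S)$ vanishing on $Z$ and nonzero at every point of $\Xi$. Then $a\notin\kp$, so $x\in\Spec(\cO(S)[\frac{1}{a}])$. The analytification of this open set is $S\setminus\{a=0\}\subset S\setminus Z$, on which $\alpha$ vanishes.

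The routes you actually sketch do not close the gap.

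\begin{itemize}
\item Your hope that, for general $g\in G^{\an}$, the zero locus $X$ of Proposition~\ref{propconiveauuptomultiple} contains no ``point at infinity'' is false whenever $X^{\an}$ is noncompact, whatever $g$ is. Take a sequence in $X^{\an}$ with no accumulation point in $S$ and a free ultrafilter $\mathcal{U}$ on its index set. The functions vanishing at $\mathcal{U}$-almost all terms form a proper ideal that contains the ideal of $X$ and lies in no $\mathfrak{m}_s$ with $s\in S$. Any maximal ideal containing it is a closed point of $X$ not coming from $S$. You can only arrange for $X$ to avoid one given $x$, and that again needs the lemma above: choose $g$ with $X^{\an}\cap\Xi=\varnothing$.
\item Your ``more robust reformulation'' is not stated precisely and, as written, runs in the wrong direction. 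What is needed is not that every open $W\ni x$ contains some $D(a)$ with $a$ nonvanishing at prescribed points. It is that there exists a countable $\Xi$, depending on $x$, such that \emph{every} $a$ nonvanishing on $\Xi$ lies outside $\kp$.
\item The final suggestion of multiplying choices by general elements is not an argument.
\end{itemize}
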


\begin{proof}
Fix $\alpha\in H^k(S,\Z)$ and $x\in\Spec(\cO(S))$. Let~${\kp\subset\cO(S)}$ be the prime ideal associated with $x$. By Lemma \ref{lemideals} below, there exists a discrete (hence countable) subset~$\Xi\subset S$ such that any element of $\kp$ vanishes at some point of~$\Xi$. By Theorem~\ref{thintegral}, there exists a closed analytic subset $Z\subset S$ such that $Z\cap\Xi=\varnothing$ and~$\alpha|_{S\setminus Z}=0$ in~$H^k(S\setminus Z,\Z)$. Choose $a\in \cO(S)$ vanishing on $Z$ but not on any point of $\Xi$ (so~$a\notin\kp$). Define~$U:=\Spec(\cO(S)[\frac{1}{a}])\subset\Spec(\cO(S))$, so~$U^{\an}=S\setminus\{a=0\}$. One has~$x\in U$ (because~$a\notin\kp$) and $\alpha|_{U^{\an}}=0$ (because~$U^{\an}\subset S\setminus Z$). Since $x$ was arbitrary, this shows that the image of $\alpha$ in $H^k_{\nr}(S,\Z)$ vanishes.
\end{proof}

\begin{lem}
\label{lemideals}
Let $S$ be a finite-dimensional Stein space. Let $I\subsetneq\cO(S)$ be an ideal. There exists a discrete subset $\Xi\subset S$ such that any $a\in I$ vanishes at a point of~$\Xi$. 
 \end{lem}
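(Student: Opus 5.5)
We argue by contradiction. Suppose that no discrete subset $\Xi\subset S$ has the required property. Equivalently, for every discrete $\Xi\subset S$ there is an $a_\Xi\in I$ that vanishes at no point of $\Xi$. The plan is to exhaust $S$ by a sequence of subsets and build, step by step, finitely many elements of $I$ with no common zero on $S$. We then use the Stein property and finite-dimensionality to deduce that $1\in I$, contradicting $I\subsetneq\cO(S)$.

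\emph{Step 1: elements of $I$ with no common zero.} Let $n=\dim S$. We construct $a_0,\dots,a_n\in I$ such that the common zero locus $Z_j:=\{a_0=\dots=a_j=0\}$ has dimension $\leq n-j-1$ at each of its points. Then $Z_n=\varnothing$. Start with $Z_{-1}:=S$. Given $Z_{j-1}$, choose a discrete set $\Xi_j$ containing one point in each irreducible component of $Z_{j-1}$. There are only countably many such components and they are locally finite, so $\Xi_j$ can be chosen discrete. By the contradiction hypothesis there is $a_j\in I$ that is nonzero at every point of $\Xi_j$. Hence $a_j$ vanishes identically on no irreducible component of $Z_{j-1}$, and the dimension drops by one. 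The only points to check are that irreducible components of an analytic subset form a locally finite family, and that a function nonvanishing at one point of an irreducible component cuts it in smaller dimension; both are standard.

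\emph{Step 2: generating the unit ideal.} We now have $a_0,\dots,a_n\in I$ with no common zero on $S$. The ideal sheaf they generate is therefore all of $\cO_S$, so the morphism $\cO_S^{\oplus(n+1)}\to\cO_S$, $(b_j)\mapsto\sum b_ja_j$, is surjective. Its kernel $\cK$ is coherent. Since $S$ is Stein, $H^1(S,\cK)=0$, so the map is surjective on global sections. This gives $b_j\in\cO(S)$ with $\sum b_ja_j=1$, so $1\in I$, a contradiction. Nilpotents cause no trouble here: surjectivity of the sheaf map can be checked at stalks, and at each point some $a_j$ is a unit in the local ring.

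\emph{Main obstacle.} The main subtlety is in Step 1: making sure $\Xi_j$ is genuinely discrete in $S$ and that the dimension strictly decreases, including for nonreduced or non-equidimensional $S$. To handle this, we pass to $S^{\red}$ and use the decomposition of $Z_{j-1}$ into its locally finite family of irreducible components. Finite-dimensionality of $S$ is exactly what guarantees that the induction stops after $n+1$ steps.
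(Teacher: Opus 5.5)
Your proof is correct and takes essentially the same approach as the paper: a discrete set meeting every irreducible component of a common zero locus forces a dimension drop, and Cartan's Theorem~B on the coherent kernel converts an empty common zero locus into $1\in I$. The paper merely packages this as an extremal argument rather than your $(n+1)$-step induction by contradiction: it chooses $a_1,\dots,a_r\in I$ with $\dim\{a_1=\dots=a_r=0\}$ minimal, uses Theorem~B to see that this locus is nonempty, and takes $\Xi$ to meet each of its irreducible components.
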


\begin{proof}
Choose a finite collection~$(a_1,\dots, a_r)$ of elements of $I$ such that the dimension of $T:=\{a_1=\dots=a_r=0\}\subset S$ is minimal. Since $S$ is Stein, the exact sequence~$\cO_S^{\oplus r}\xrightarrow{(a_1,\dots,a_r)}\cO_S\to\cO_T\to 0$ remains exact after taking global sections. As $I\neq\cO(S)$, we deduce that $T$ is nonempty. 

Let $\Xi\subset S$ be a discrete subset containing at least one point from each irreducible component of~$T$. Suppose for contradiction that $a\in I$ does not vanish at any point of $\Xi$. Then~$T':=\{a=a_1=\dots=a_r=0\}\subset T$ contains no irreducible component of $T$, so $\dim(T')<\dim(T)$. This contradicts our choice of $T$.
\end{proof}

\begin{rems}
\label{remsunramified}
(i)
Let $S$ be a finite-dimensional Stein space and fix $k\geq 2$. We do not know whether $H^k_{\nr}(S,\Z)$ vanishes (this would strengthen Theorem \ref{thunramified}). 
%even after tensorization with Q!

(ii) 
We do not know whether $\cH_S^k(\Z)$ is torsion-free for all finite-dimen\-sional Stein spaces and all $k\geq 0$. This would be a Stein analogue of \cite[Th\'eor\`eme 3.1]{CTV}.

(iii) The Leray spectral sequence of the morphism $i:S\to\Spec(\cO(S))$ reads
\begin{equation}
\label{coniveauss}
H^p(\Spec(\cO(S)),\cH^q_S(\Z))\Rightarrow H^{p+q}(S,\Z).
\end{equation}
It is an analogue in Stein geometry of the coniveau spectral sequence of \cite{BO,CTV}. Theorem~\ref{thunramified} suggests that the behaviour of \eqref{coniveauss} may be very different from that of the algebraic coniveau spectral sequence.
\end{rems}

\section{Cohomology of Stein spaces from algebraic varieties}
\label{secweight}

\subsection{Finite coefficients}
\label{paralgebraization}

By the following theorem, singular cohomology classes with finite coefficients on a finite-dimensional Stein space have an algebraic origin.

\begin{thm}
\label{thmcolim}
Let $S$ be a finite-dimensional Stein space. Fix~$k\geq 0$ and $m\geq 1$. Letting $f:S\to V^{\an}$ run over all holomorphic maps from $S$ to the analytification of some quasi-projective algebraic variety $V$ over $\C$ (with transition maps induced by analytifications of morphisms of algebraic varieties) gives rise to an isomorphism
\begin{equation}
\label{colim}
\underset{f:S\to V^{\an}}{\colim }H^k(V^{\an},\Z/m)\isoto H^k(S,\Z/m).
\end{equation}
In addition, in \eqref{colim}, one can restrict to those $f:S\to V^{\an}$ with $V$ affine.
\end{thm}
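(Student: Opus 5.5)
The plan is to go through Theorem~\ref{thcompaS} and approximate the ring $\cO(S)$ by finitely generated $\C$-algebras. First, the index category is filtered, or at least the colimit can be computed along a cofinal filtered system. Given $f_1:S\to V_1^{\an}$ and $f_2:S\to V_2^{\an}$, the map $(f_1,f_2):S\to (V_1\times V_2)^{\an}$ dominates both. To equalize two morphisms $u,v:V\to W$ with $u^{\an}\circ f=v^{\an}\circ f$, one passes to the equalizer, a closed subvariety of $V$ through which $f$ factors (use the universal property of analytification).

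For the restriction to affine $V$, I would use the following description. A holomorphic map $f:S\to V^{\an}$ with $V$ affine is the same thing as a $\C$-algebra morphism $\cO(V)\to\cO(S)$, that is, a morphism of schemes $\Spec(\cO(S))\to V$. Indeed, holomorphic maps to $V^{\an}\subset\C^N$ cut out by polynomials $f_1,\dots,f_M$ are given by $N$-tuples of holomorphic functions satisfying these equations. The affine part of the system is thus the filtered system of finitely generated $\C$-subalgebras $A\subset\cO(S)$ (together with maps to them), with $\colim A=\cO(S)$. By \cite[Theorem~\href{https://stacks.math.columbia.edu/tag/09YQ}{09YQ}]{SP},
$$\colim_{A}H^k_{\et}(\Spec(A),\Z/m)\isoto H^k_{\et}(\Spec(\cO(S)),\Z/m).$$
Artin's comparison theorem identifies $H^k_{\et}(\Spec(A),\Z/m)$ with $H^k(\Spec(A)^{\an},\Z/m)$. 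By Theorem~\ref{thcompaS}, the target is $H^k(S,\Z/m)$. It remains to check that the comparison morphisms are compatible with the maps $S\to\Spec(A)^{\an}$, which follows from the functoriality of the morphisms of sites $\varepsilon$ of \S\ref{parGroth} under $S=\Spec(\cO(S))^{\an}\to\Spec(A)^{\an}$. This proves \eqref{colim} for the affine subsystem.

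To pass from affine to quasi-projective $V$, it suffices to show that every $f:S\to V^{\an}$ with $V$ quasi-projective factors through $U^{\an}\subset V^{\an}$ for some affine open $U\subset V$. Then the affine subsystem is cofinal and the two colimits agree. Embed $V$ in $\P^N$ with closure $\overline V$. The pullback $f^*\cO(1)$ is a holomorphic line bundle on $S$, generated by $N+1$ sections. I would choose a section $\sigma$ of $\cO_{\overline V}(e)$, for $e$ large, vanishing on $\overline V\setminus V$ and such that $f^*\sigma$ vanishes nowhere on $S$; then $f$ lands in the affine open $\overline V\setminus\{\sigma=0\}$.

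The main obstacle is exactly this avoidance of a hypersurface, because $f(S)$ need not be small. I would try to replace $V$ by an affine variety via Jouanolou's trick. Take an affine torsor $\tilde V\to V$ under a vector bundle, which exists for quasi-projective $V$; it induces an isomorphism on cohomology and has contractible fibres. Then lift $f$ holomorphically to $\tilde V^{\an}$. This is possible because the obstruction is a torsor under a holomorphic vector bundle on a Stein space, classified by an $H^1$ of a coherent sheaf, which vanishes (Cartan's Theorem~B). This lifting step cleanly gives the required cofinality and avoids the hypersurface choice altogether.
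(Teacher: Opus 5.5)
Your proposal is correct and is essentially the paper's proof. Jouanolou's trick makes affine targets cofinal. For affine $V$, holomorphic maps $S\to V^{\an}$ are identified with morphisms $\Spec(\cO(S))\to V$, which reduces the statement to \cite[Theorem~\href{https://stacks.math.columbia.edu/tag/09YQ}{09YQ}]{SP}, Artin's comparison theorem and Theorem~\ref{thcompaS}.

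The only minor difference is how you lift $f$ to the affine Jouanolou torsor under the vector bundle $E$. You use the vanishing, by Cartan's Theorem~B, of $H^1$ of the locally free sheaf of holomorphic sections of $f^*E^{\an}$, where the paper invokes the Oka principle (Theorem~\ref{thOka}). Your argument is valid, also for nonreduced $S$, and is slightly more elementary.
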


\begin{proof}
Fix $f:S\to V^{\an}$ as in the theorem. By Jouanolou's trick (see \cite[Lemme~1.5]{Jouanolou}), there exist a vector bundle $E$ on~$V$ and an $E$-torsor $\pi:V'\to V$ with~$V'$ affine. By Oka theory (see Theorem \ref{thOka}), the map~$f$ lifts to a holomorphic map~$f':S\to (V')^{\an}$. This shows that the $f:S\to V^{\an}$ with $V$ affine form a cofinal family, so we may and will henceforth restrict our attention to such~$f$.

If $V$ is an affine algebraic variety over $\C$, there is a natural bijection between holomorphic maps~${f:S\to V^{\an}}$ and morphisms~$g:\Spec(\cO(S))\to V$ (if $V$ is defined by the vanishing of some~${F_i\in \C[X_1,\dots,X_N]}$ in $\A^N_{\C}$, both are given by $N$ elements of~$\cO(S)$ annihilating the $F_i$). We deduce a commutative diagram
\begin{equation}
\label{diagcolim}
\begin{aligned}
\xymatrix@C=2em@R=1em{
\underset{g:\Spec(\cO(S))\to V}{\colim}H^k_{\et}(V,\Z/m)\ar[r]\ar[d]&H^k_{\et}(\Spec(\cO(S)),\Z/m)\ar[d]\\
\underset{f:S\to V^{\an}}{\colim}H^k(V^{\an},\Z/m)\ar[r]&H^k(S,\Z/m).
}
\end{aligned}
\end{equation}
In \eqref{diagcolim}, the upper horizontal arrow is an isomorphism by  \cite[Theorem~\href{https://stacks.math.columbia.edu/tag/09YQ}{09YQ}]{SP}. So are the vertical arrows by Artin's comparison theorem \cite[XVI, Th\'eor\`eme~4.1]{SGA43} and Theorem \ref{thcompaS} respectively. The theorem follows.
\end{proof}

\begin{cor}
 \label{corcolim}
Let $S$ be a finite-dimensional Stein space. Fix $\alpha\in H^k(S,\Z/m)$ for some $k\geq 0$ and $m\geq 1$. There exist an affine algebraic variety $V$ over $\C$, a holomorphic map $f:S\to V^{\an}$, and a class $\beta\in H^k(V^{\an},\Z/m)$ such that $\alpha=f^*\beta$.
\end{cor}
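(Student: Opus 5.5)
This follows directly from Theorem \ref{thmcolim}, specifically its refinement allowing one to restrict to holomorphic maps $f:S\to V^{\an}$ with $V$ affine. The plan is to view $\alpha$ as an element of the right-hand side of the isomorphism \eqref{colim}, where the colimit is indexed by affine targets, and to use surjectivity of that isomorphism.

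Concretely, I would first recall how colimits of abelian groups are computed over the (filtered, or at least suitably directed) indexing category of holomorphic maps $f:S\to V^{\an}$ with $V$ affine. Every element of such a colimit is represented by a class $\beta\in H^k(V^{\an},\Z/m)$ for a single index $f:S\to V^{\an}$. The map to $H^k(S,\Z/m)$ sends this representative to $f^*\beta$.

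Surjectivity of \eqref{colim} then provides some index $f$ and some $\beta$ with $f^*\beta=\alpha$, which is exactly the statement.

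The only point needing care is that an element of a colimit is represented at a single index. This holds for any colimit of abelian groups in the weak sense: the colimit is a quotient of the direct sum of the groups, so a general element is a finite sum $\sum_i f_i^*\beta_i$. If the indexing category is not filtered, I would combine the finitely many indices into one. Take $V:=\prod_i V_i$, which is still affine, with the product map $f=(f_i)_i:S\to V^{\an}$, and set $\beta:=\sum_i \mathrm{pr}_i^*\beta_i$. Then $f^*\beta=\sum_i f_i^*\beta_i$, so a single index suffices. I expect no real obstacle beyond this bookkeeping step.
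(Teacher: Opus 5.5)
Your proof is correct and follows the paper's route: the corollary is read off directly from the surjectivity of \eqref{colim} in Theorem~\ref{thmcolim}, restricted to affine $V$. Your product trick, which collapses a finite sum $\sum_i f_i^*\beta_i$ to a single index, is valid but not strictly needed. The indexing category is already filtered, since products and equalizers of affine varieties are affine and $f$ factors through them.
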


\subsection{Integral coefficients}

Our next result is an integral variant of Theorem~\ref{thmcolim}.

\begin{thm}
\label{thmcolimZ}
Let $S$ be a finite-dimensional Stein space. For any~$k\geq 1$, letting $f:S\to V^{\an}$ run over all holomorphic maps from $S$ to the analytification of some quasi-projective algebraic variety $V$ over $\C$ (with transition maps induced by analytifications of morphisms of algebraic varieties) gives rise to a morphism
\begin{equation}
\label{colimZ}
\underset{f:S\to V^{\an}}{\colim }H^k(V^{\an},\Z)\to H^k(S,\Z)
\end{equation}
which is surjective, and bijective in restriction to the torsion subgroups.

The statement remains true if one restricts to those $f:S\to V^{\an}$ with $V$ affine.
\end{thm}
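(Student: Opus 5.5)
The plan is to reduce the integral statement to the finite-coefficient Theorem \ref{thmcolim}, using Theorem \ref{thintegral} to control the non-torsion part. First I check that the index category of pairs $(V,f)$ is filtered, taking products $V_1\times V_2$ and equalizers, as in Theorem \ref{thmcolim}. Next, by Jouanolou's trick and Theorem \ref{thOka}, exactly as in the proof of Theorem \ref{thmcolim}, the maps with $V$ affine are cofinal. So I may work with affine $V$ throughout.

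\emph{Torsion injectivity.} Let $\beta\in H^k(V^{\an},\Z)$ be torsion with $f^*\beta=0$, and say $m\beta=0$. Choose $\beta'\in H^{k-1}(V^{\an},\Z/m)$ with $\partial_m\beta'=\beta$. Then $f^*\beta'$ lies in the image of $H^{k-1}(S,\Z)\to H^{k-1}(S,\Z/m)$, so it is the reduction of some $\gamma\in H^{k-1}(S,\Z)$. To make this compatible with the colimit, I would use the universal coefficient sequences and compare colimits directly. Filtered colimits are exact, so we get a commutative ladder
\[0\to \colim H^{k-1}(V^{\an},\Z)\otimes\Z/m\to \colim H^{k-1}(V^{\an},\Z/m)\to \colim H^k(V^{\an},\Z)[m]\to 0,\]
mapping to the analogous sequence for $S$. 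The middle vertical map is an isomorphism by Theorem \ref{thmcolim}. By the snake lemma, the right vertical map is injective as soon as the left one is surjective, and surjective as soon as it is. So everything reduces to surjectivity of \eqref{colimZ} in every degree $k-1\geq 0$, tensored with $\Z/m$. Degree $0$ is trivial: use $V=\Spec\C$, or a finite set when $S$ is disconnected, noting $H^0$ of such $V$ does not vanish. This is where care is needed, since $k=1$ uses $H^0$.

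\emph{Surjectivity.} Fix $\alpha\in H^k(S,\Z)$ with $k\geq1$. For $k=1$, $\alpha$ corresponds to a holomorphic map $S\to\C^\times$ via the exponential sequence, because $H^1(S,\cO_S)=0$ and $H^1(S,\Z)$ is the cokernel of $\exp$ on global sections; so it comes from $V=\mathbb{G}_m$. For $k\geq2$, first reduce to $S$ a connected manifold, as in the proof of Theorem \ref{thintegral}: use the retraction $r:S'\to S$ from \cite[Lemma~2.1]{Stein}, and compose maps from $S'$ with the holomorphic $i$. Then apply Proposition \ref{propBockstein} and its consequence in the proof of Theorem \ref{thintegral}. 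This gives an affine dense open $U=\Spec(\cO(S)[1/a])$, an affine variety $V_0$, a morphism $g:U\to V_0$, and $\varepsilon\in H^{k-1}(V_0^{\an},\Z/m)$ with $\alpha|_{U^{\an}}=(g^{\an})^*\partial_m\varepsilon$. The map $(a,g)$ gives a holomorphic map $S\setminus\{a=0\}\to V_0^{\an}$, but I need a map from all of $S$.

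The main obstacle is to globalize, that is, to kill the discrepancy supported on $Z=\{a=0\}$. I would try the following. The map $S\to\A^1$ given by $a$, together with $g$, assembles into an algebraic model. Write $\cO(U)=\cO(S)[1/a]$ and descend $g$ and the whole triangle \eqref{trianglealg} to an algebraic variety $W$ with a function $a_W$, by a limit argument (\cite[Theorem~09YQ]{SP}). The class $\tilde\gamma\in H^{k-1}_{\et}(\Spec\cO(S),\LL)$ and the Thom/Gysin data of Proposition \ref{propBockstein} then descend along a morphism $\Spec\cO(S)\to W$. On the algebraic side, the triangle analogous to \eqref{triangleanZ} then produces, via the long exact sequence, a class on $W^{\an}$ whose pullback differs from $\alpha$ by something with compactly controlled image, namely $\alpha$ minus a pushforward from $X^{\an}$ of $\beta$ divided out by $m$. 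Iterating on the lower-dimensional submanifold $X^{\an}$, by induction on $\dim S$, should handle the Gysin term $(\iota^{\an})_*\beta$, which must itself be algebraized. Surjectivity onto torsion then follows as above, and the affine restriction follows from cofinality.
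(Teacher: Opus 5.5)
\textbf{The torsion reduction and the case $k=1$ are fine; surjectivity for $k\geq 2$ is not established.}

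Your snake-lemma reduction of the torsion statement to surjectivity is correct. It uses the Bockstein sequences $0\to H^{k-1}(-,\Z)\otimes\Z/m\to H^{k-1}(-,\Z/m)\to H^k(-,\Z)[m]\to 0$, exactness of filtered colimits, and Theorem \ref{thmcolim} on the middle term. Your treatment of $k=1$ via the exponential sequence and $V=\mathbb{G}_m$ is also correct.

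The genuine gap is surjectivity for $k\geq 2$. Your torsion argument also depends on it in degrees $k\geq 3$.

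Proposition \ref{propBockstein} and the argument of Theorem \ref{thintegral} only produce a morphism $g$ defined on $U=\Spec(\cO(S)[\frac{1}{a}])$. Your globalization sketch does not repair this, for three reasons.
\begin{enumerate}
\item Only the mod $m$ triangle \eqref{trianglealg} has an \'etale incarnation; that is exactly what Lemma \ref{lemThomanal} provides. The integral information on $\alpha$ is carried by $\gamma\in H^{k-1}(S,\K)$ for the integral triangle \eqref{triangleanZ}. That triangle has no \'etale counterpart to which a comparison theorem applies, since Theorem \ref{thproperpf} concerns constructible, hence torsion, coefficients. Descending $\tilde\gamma$ to some $W$ therefore only gives mod $m$ information on $W^{\an}$. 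Nothing in your sketch turns this into an integral class on $W^{\an}$ pulling back to $\alpha$, and ``$\beta$ divided out by $m$'' has no meaning in integral cohomology.
\item The induction through $X^{\an}$ targets the wrong term. The class $m\alpha=(\iota^{\an})_*\beta$ is already the pullback $f^*(j^{\an})_*\zeta$ of an algebraic class by a holomorphic map (see the proofs of Propositions \ref{propconiveauOka} and \ref{propconiveauuptomultiple}). The whole difficulty is to pass from $m\alpha$ to $\alpha$.
\item Reducing to connected $S$ is not legitimate for a surjectivity statement. If $S$ has infinitely many components, maps to infinitely many different varieties cannot be assembled into a single quasi-projective $V$.
\end{enumerate}

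\textbf{The missing idea: divide by $m$ on the algebraic side, using the injectivity in Theorem \ref{thmcolim}.}

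\begin{enumerate}
\item By Proposition \ref{propconiveauOka}, with $f$ made holomorphic by Theorem \ref{thOka} (since $V^{\an}$ is homogeneous, hence Oka), you get $m\alpha=f^*\omega$ with $\omega\in H^k(V^{\an},\Z)$.
\item Then $f^*(\omega\bmod m)=0$. By injectivity of \eqref{colim}, there are $g:V'\to V$ and $f':S\to (V')^{\an}$ with $f=g^{\an}\circ f'$ and $(g^{\an})^*\omega=m\omega'$ for some $\omega'\in H^k((V')^{\an},\Z)$.
\item Now $\alpha-(f')^*\omega'$ is $m$-torsion, so it equals $\partial_m(\delta)$ for some $\delta\in H^{k-1}(S,\Z/m)$.
\item By surjectivity of \eqref{colim}, $\delta$ comes from some $V''$. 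Conclude on $V'\times V''$.
\end{enumerate}
This is precisely the chase in the paper's diagram \eqref{diagcolimcolim}. There, the rows involving $\colim_m H^*(-,\Z)=H^*(-,\Z)\otimes\Q$ are surjective by Proposition \ref{propconiveauOka}.

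The same rational surjectivity in degree $k-1$ also gives torsion-injectivity for $k\geq 2$ without any induction on the degree. Take the colimit over $m$ of your ladder (as in \eqref{diagcolimcolim}). Its left term becomes $H^{k-1}(-,\Z)\otimes\Q/\Z$, onto which rational surjectivity suffices.

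Neither the reduction to manifolds, nor Proposition \ref{propBockstein}, nor Theorem \ref{thintegral} is needed.
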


\begin{proof}
By Jouanolou's trick applied as in the first paragraph of the proof of Theorem~\ref{colim}, it suffices to prove the first statement (in which $V$ may not be affine).

Let $f:S\to V^{\an}$ be as in the theorem. Consider the long exact sequences of cohomology of $S$ and $V^{\an}$ associated with~$0\to\Z\xrightarrow{m}\Z\to \Z/m\to 0$. Take the colimit of these long exact sequences over all $m\geq 1$, with transition maps induced by the morphisms of short exact sequences
\begin{equation}
\label{diagmm'}
\begin{aligned}
\xymatrix@C=2em@R=.75em{
0\ar[r]&\Z\ar^m[r]\ar@{=}[d]&\Z\ar[r]\ar^{m'}[d]&\Z/m\ar[r]\ar[d]&0\\
0\ar[r]&\Z\ar^{mm'}[r]&\Z\ar[r]&\Z/mm'\ar[r]&0,
}
\end{aligned}
\end{equation}
as well as over all $f:S\to V^{\an}$. This yields a commutative exact diagram
\begin{equation}
\label{diagcolimcolim}
\begin{aligned}
\xymatrix@C=2em@R=.75em{
\underset{f:S\to V^{\an}}{\colim}\underset{m}{\colim}\,
H^{k-1}(V^{\an},\Z)\ar@{->>}[r]\ar[d]&\underset{m}{\colim}\,H^{k-1}(S,\Z)\ar[d]\\
\underset{f:S\to V^{\an}}{\colim}\underset{m}{\colim}\,
H^{k-1}(V^{\an},\Z/m)\ar^{\hspace{2em}\sim}[r]\ar[d]&\underset{m}{\colim}\,H^{k-1}(S,\Z/m)\ar[d]\\
\underset{f:S\to V^{\an}}{\colim}H^k(V^{\an},\Z)\ar[r]\ar[d]&H^k(S,\Z)\ar[d]\\
\underset{f:S\to V^{\an}}{\colim}\underset{m}{\colim}\,
H^k(V^{\an},\Z)\ar@{->>}[r]\ar[d]&\underset{m}{\colim}\,H^k(S,\Z)\ar[d]\\
\underset{f:S\to V^{\an}}{\colim}\underset{m}{\colim}\,
H^k(V^{\an},\Z/m)\ar^{\hspace{2em}\sim}[r]&\underset{m}{\colim}\,H^k(S,\Z/m).
}
\end{aligned}
\end{equation}
Note that, when we write $\underset{f:S\to V^{\an}}{\colim}\underset{m}{\colim}$, the two colimits commute, and when we write $\underset{m}{\colim}\,H^*(-,\Z)$, the transition morphisms are given by multiplication maps. The second and fifth rows of \eqref{diagcolimcolim} are isomorphisms by Theorem~\ref{thmcolim}, and the first and fourth rows are surjective as a consequence of Proposition~\ref{propconiveauOka} (except maybe the first row when $k=1$). The theorem is now proved by a diagram chase in~\eqref{diagcolimcolim}, taking into account that the torsion subgroups of~$H^k(V^{\an},\Z)$ and~$H^k(S,\Z)$ are exactly the images of $\underset{m}{\colim}\,H^{k-1}(V^{\an},\Z/m)$ and~$\underset{m}{\colim}\,H^{k-1}(S,\Z/m)$ (except for the assertion concerning torsion subgroups when $k=1$, which is trivial because~$H^1(V^{\an},\Z)$ and~$H^1(S,\Z)$ have no torsion).
\end{proof}

\begin{cor}
 \label{corcolimZ}
Let $S$ be a finite-dimensional Stein space. Fix $\alpha\in H^k(S,\Z)$ for some $k\geq 1$. There exist an affine algebraic variety $V$ over $\C$, a holomorphic map~ $f:S\to V^{\an}$, and a class $\beta\in H^k(V^{\an},\Z)$ such that $\alpha=f^*\beta$.
\end{cor}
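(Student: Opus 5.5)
This is a direct consequence of Theorem~\ref{thmcolimZ}, in its affine form. The plan is to read off the surjectivity of \eqref{colimZ} and then represent the resulting element of the colimit by a single map.

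First, apply Theorem~\ref{thmcolimZ} to $S$ and the given $k\geq 1$, restricting to those holomorphic maps $f:S\to V^{\an}$ with $V$ affine. The statement allows this restriction, by the Jouanolou trick combined with Oka theory (Theorem~\ref{thOka}). Surjectivity of \eqref{colimZ} then gives an element of $\colim_{f:S\to V^{\an}}H^k(V^{\an},\Z)$ mapping to $\alpha$.

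Next, observe that any element of a colimit of abelian groups indexed by a category is represented by an element of one term, after using finite sums if the index category is not filtered. Here the index category is filtered, or can easily be made so. Given $f_1:S\to V_1^{\an}$ and $f_2:S\to V_2^{\an}$, the product map $(f_1,f_2):S\to(V_1\times V_2)^{\an}$ dominates both, and $V_1\times V_2$ is again affine. For two parallel transition maps, one can pass to the closed subvariety of $V$ where they agree, which is still affine and through which $f$ factors. More simply, a finite sum of pulled-back classes $\sum_i f_i^*\beta_i$ is the pullback by the product map $\prod_i f_i:S\to(\prod_i V_i)^{\an}$ of $\sum_i \mathrm{pr}_i^*\beta_i$. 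So in either case we obtain a single affine $V$, a holomorphic map $f:S\to V^{\an}$, and a class $\beta\in H^k(V^{\an},\Z)$ whose image in $H^k(S,\Z)$ is $\alpha$. The map from a term of the colimit to $H^k(S,\Z)$ is exactly $f^*$, so $\alpha=f^*\beta$.

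There is no real obstacle, since the substantive content lies in Theorem~\ref{thmcolimZ}. The only point to check is the representability of colimit elements by a single term, which the product-of-maps argument above handles.
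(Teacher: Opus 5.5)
Your proposal is correct and matches the paper, which states this corollary without proof as an immediate consequence of the surjectivity of \eqref{colimZ} restricted to affine $V$ (Theorem~\ref{thmcolimZ}). Your product-map argument, writing a finite sum $\sum_i f_i^*\beta_i$ as the single pullback $(f_1,\dots,f_r)^*\bigl(\sum_i \mathrm{pr}_i^*\beta_i\bigr)$ from the affine variety $\prod_i V_i$, correctly supplies the one routine detail the paper leaves implicit.
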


\begin{rems}
\label{remcolim}
(i) The morphism \eqref{colimZ} is not surjective for $k=0$ in general (a class~$\alpha\in H^0(S,\Z)$ taking infinitely many distinct values is not in its image).

(ii) Let us show that \eqref{colimZ} is not injective in general. Serre constructed two quasi-projective algebraic varieties~$V$ and~$V'$ over $\C$ such that $V^{\an}$ and~$(V')^{\an}$ are both biholomorphic to~$(\C^\times)^2$, but such that~$H^1(V^{\an},\Q)$ and $H^1((V')^{\an},\Q)$ are pure of weights~$1$ and $2$ in the sense of Deligne \cite{Hodge2, Hodge3} (see \cite[VI, Example~3.2]{Hartshorneample} for the construction of~$V$ and~$V'$). As morphisms of rational Hodge structures are strict with respect to the weight filtration (see \cite[Th\'eor\`eme~2.3.5\,(iii)]{Hodge2}), there cannot exist an algebraic variety~$W$ over $\C$ and morphisms~$h:W\to V$ and~$h':W\to V'$ such that~$h^*(H^1(V^{\an},\Q))$ and~$(h')^*(H^1((V')^{\an},\Q))$ are equal and nonzero. This implies that \eqref{colimZ} is not injective for $S=(\C^\times)^2$.
 \end{rems}

\subsection{The Stein weight filtration}
\label{parGS}

Inspired by Deligne's mixed Hodge theory, Gillet and Soul\'e defined a weight filtration $W_{\bullet}$ on the compactly supported singular cohomology of algebraic varieties over $\C$, with arbitrary coefficients (see~\mbox{\cite[\S 3.1.2]{GS}}). Their construction was extended to cohomology without support by Cirici and Guill\'en \mbox{\cite[\S6]{CG}} (this is attributed to~\cite{GNA} in the last lines of~\cite[\S 3.1.2]{GS}).

Let $S$ be a finite-dimensional Stein space. Fix $k\geq 0$ and $m\geq 1$. For~${j\in\Z}$, we define~$W^{\St}_jH^k(S,\Z/m)$ to be the subgroup of~$H^k(S,\Z/m)$ spanned by the~$f^*\alpha$, where~${f:S\to V^{\an}}$ is a holomorphic map to the analytification of some quasi-projective algebraic variety~$V$ over~$\C$, and where~$\alpha\in W_jH^k(V^{\an},\Z/m)$. In view of Theorem~\ref{thmcolim}, one can restrict to those $V$ that are affine in the above definition. 

We call the increasing filtration~$W_{\bullet}^{\St}$ on~$H^k(S,\Z/m)$ the \textit{Stein weight filtration}. The following proposition gathers its basic properties.

\begin{prop}
\label{propweight}
Let $S$ be a finite-dimensional Stein space. Fix $k\geq 0$ and $m\geq 1$.
\begin{enumerate}[label=(\roman*)] 
\item 
\label{Wi}
Let $p:T\to S$ be a holomorphic map of finite-dimensional Stein spaces. For~$j\in\Z$, one has $p^*(W^{\St}_jH^k(S,\Z/m))\subset W^{\St}_jH^k(T,\Z/m)$.
\item 
\label{Wii}
One has $W_{-1}^{\St}H^k(S,\Z/m)=0$ and $W_{2k}^{\St}H^{k}(S,\Z/m)=H^k(S,\Z/m)$.
\item
\label{Wiii}
Let $V$ be a quasi-projective algebraic variety over $\C$. Let $f:S\to V^{\an}$ be holomorphic. If $\alpha\in H^k(V^{\an},\Z/m)$ satisfies $f^*\alpha\in W_j^{\St}H^k(S,\Z/m)$, then there exists a morphism $g:V'\to V$ of quasi-projective algebraic varieties over~$\C$ such that $f$ lifts to $f':S\to (V')^{\an}$ and $(g^{\an})^*\alpha\in W_jH^k((V')^{\an},\Z/m)$.
\end{enumerate}
\end{prop}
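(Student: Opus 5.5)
Parts (i) and (ii) follow directly from the definition and from the corresponding properties of the Gillet--Soulé/Cirici--Guillén weight filtration on algebraic varieties. Part (iii) is where the work lies. The main tool there is the colimit isomorphism \eqref{colim} of Theorem \ref{thmcolim}, used over the category of holomorphic maps to quasi-projective varieties.

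For (i), take a generator $f^*\alpha$ of $W^{\St}_jH^k(S,\Z/m)$, with $f:S\to V^{\an}$ holomorphic and $\alpha\in W_jH^k(V^{\an},\Z/m)$. Then $p^*f^*\alpha=(f\circ p)^*\alpha$, and $f\circ p:T\to V^{\an}$ is again holomorphic to a quasi-projective variety. So $p^*f^*\alpha$ is a generator of $W^{\St}_jH^k(T,\Z/m)$.

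For (ii), I would use two facts about the weight filtration of a quasi-projective variety $V$: $W_{-1}H^k(V^{\an},\Z/m)=0$, and $W_{2k}H^k(V^{\an},\Z/m)=H^k(V^{\an},\Z/m)$. I expect both to be stated in \cite{GS,CG}, and I would cite them rather than reprove them. The first fact gives $W^{\St}_{-1}=0$ immediately. For the second, note that $W^{\St}_{2k}$ contains every pullback $f^*\beta$. By the surjectivity in Theorem \ref{thmcolim}, these pullbacks exhaust $H^k(S,\Z/m)$.

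For (iii), write $f^*\alpha=\sum_i f_i^*\alpha_i$, with $f_i:S\to V_i^{\an}$ holomorphic and $\alpha_i\in W_jH^k(V_i^{\an},\Z/m)$. Let $V'':=V\times\prod_iV_i$, which is quasi-projective. Let $f'':=(f,f_1,\dots,f_r):S\to (V'')^{\an}$, and let $\pi,\pi_i$ be the projections. Put
$$\gamma:=\pi^*\alpha-\sum_i\pi_i^*\alpha_i\in H^k((V'')^{\an},\Z/m).$$
Then $(f'')^*\gamma=0$. Each $\pi_i^*\alpha_i$ lies in $W_j$, by functoriality of $W$ for algebraic morphisms. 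The injectivity in \eqref{colim} then gives a factorization $f''=h^{\an}\circ f'$, with $f':S\to (V')^{\an}$ and $h:V'\to V''$ algebraic, such that $(h^{\an})^*\gamma=0$. This step needs the colimit to be filtered, or at least needs every element of the kernel to be killed along a single transition map. I would check filteredness using fiber products: two maps $S\to V_1^{\an}$ and $S\to V_2^{\an}$ factor through $(V_1\times V_2)^{\an}$. Two parallel transition maps $V'\rightrightarrows V$ can be equalized by the fiber product $V'\times_{V\times V}V$, through which $f'$ factors. Set $g:=\pi\circ h$. Then
$$(g^{\an})^*\alpha=\sum_i(\pi_i\circ h)^{\an*}\alpha_i\in W_jH^k((V')^{\an},\Z/m),$$
and $f'$ is the desired lift of $f$.

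The main obstacle is justifying this injectivity statement: that a class in the kernel is killed by an actual morphism $V'\to V''$ through which $f''$ lifts. Since the indexing category is filtered, this should follow from the standard description of filtered colimits, once filteredness is checked as above.
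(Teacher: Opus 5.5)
Your proposal is correct and follows the paper's proof. The paper gets (i) from the definition, (ii) from the Cirici--Guill\'en bounds on the weight filtration together with surjectivity of \eqref{colim}, and (iii) from injectivity of \eqref{colim}. Your product-and-difference-class reduction and your check of filteredness via products and equalizers just spell out details the paper leaves implicit.
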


\begin{proof}
Assertion \ref{Wi} is immediate from the definition. Assertion \ref{Wii} follows from the corresponding properties of the weight filtration (see \cite[Corollary 6.3]{CG}) and from the surjectivity of \eqref{colim}. Assertion \ref{Wiii} follows from the injectivity of~\eqref{colim}.
\end{proof}

\begin{rems} 
(i)
In \cite[\S 2]{TotaroICM}, Totaro extended the construction of the weight filtration on the compactly supported cohomology of algebraic varieties given in~\cite{GS} to the case of complex spaces endowed with an equivalence class of compactifications. The weight filtration on cohomology without support is also defined in this more general setting (see~\mbox{\cite[\S 6]{CG}}). The Stein weight filtration defined above is not a special case of this construction. Indeed, Stein spaces are not compactifiable in general. In addition, when a Stein space is compactifiable, the weight filtration may depend on the choice of compactification (as in Serre's example described in Remark~\ref{remcolim}\,(i)), whereas the Stein weight filtration is entirely canonical.

(ii)
As an example, let us compute $W_{\bullet}^{\St}H^1(\C^\times,\Z/m)$. For any elliptic curve $E$ over~$\C$, there exists a holomorphic map $f:\C^\times\to E^{\an}$ that is a topological covering of group $\Z$. This shows that $W_{1}^{\St}H^1(\C^\times,\Z/m)=H^1(\C^\times,\Z/m)$. In addition, if~$V$ is an algebraic variety over $\C$ and $f:\C^\times\to V^{\an}$ is holomorphic, we let $V'$ be a desingularization of the Zariski closure of the image of $f$. As $f$ lifts to a holomorphic map $f':\C^\times\to (V')^{\an}$ and  $V'$ is smooth, this shows that $W_{0}^{\St}H^1(\C^\times,\Z/m)=0$.

(iii) 
We do not know whether $W_{k}^{\St}H^k(S,\Z/m)=H^k(S,\Z/m)$ holds for all finite-dimensional Stein spaces. If one could restrict, in~\eqref{colim}, to those $f:S\to V^{\an}$ with~$V$ projective, this equality could be deduced from \cite[Corollary~6.3\,2)]{CG}. 

(iv)
Let $S$ be a Stein manifold. We do not know whether $W_{k-1}^{\St}H^k(S,\Z/m)$ always vanishes. If one could restrict, in \eqref{colim}, to those $f:S\to V^{\an}$ with $V$ smooth, this could be deduced from~\cite[Corollary 6.3\,1)]{CG}. In turn, this fact would follow from the (unproven) assertion that $\cO(S)$ is a filtered colimit of smooth $\C$-algebras (a Stein analogue of the N\'eron--Popescu desingularization theorem \cite{Swan}).
\end{rems}
 
\bibliographystyle{myamsalpha}
\bibliography{Steinetale}

\end{document}